\newcommand{\lam}{{\lambda}}
\newcommand{\B}{{\mathcal{B}}}
\newenvironment{dedication}
        {\vspace{1ex}\begin{quotation}\begin{center}\begin{em}}
        {\par\end{em}\end{center}\end{quotation}\vspace{1ex}}
\DeclareMathOperator{\modulo}{mod}
\newtheorem{remark}[theorem]{Remark}
\newtheorem{example}[theorem]{Example}
\def\css{{\,\boxplus\hspace{-2.2mm}\raisebox{0.3mm}{$\rightarrow$} @}}
\def\rss{{\,\boxplus \hspace{-2.2mm}\raisebox{-1mm}{$\downarrow$} \;}}
\active\gdef@{\mkern1mu}}
\def\bezoutian{B\'{e}zoutian}
\title{Vector spaces of linearizations for matrix polynomials: a bivariate polynomial approach}
\author{Yuji Nakatsukasa\thanks{Mathematical Institute, University of Oxford, Oxford, OX2 6GG,
UK. (\texttt{Yuji.Nakatsukasa@maths.ox.ac.uk}) Supported by EPSRC grant EP/I005293/1, and by JSPS through grant No.~26870149 and as a Postdoctoral Fellow for Research Abroad.
}
\and Vanni Noferini\thanks{Department of Mathematical Sciences, University of Essex, Wivenhoe Park, Colchester, CO4 3SQ, United Kingdom
                   (\texttt{vnofer@essex.ac.uk}). Supported by European Research Council Advanced Grant MATFUN (267526).}
\and Alex Townsend\thanks{Department of Mathematics, Cornell University, Ithaca, NY 14853. (\texttt{townsend@cornell.edu})}
}
\begin{document}
\maketitle

\begin{dedication}
In memory of Leiba Rodman
\end{dedication}

\begin{abstract}
We revisit the landmark paper [D. S. Mackey, N. Mackey, C. Mehl, and V. Mehrmann, {SIAM J. Matrix Anal. Appl.}, 28 (2006), pp.~971--1004] and, by viewing matrices as coefficients for bivariate polynomials, we provide concise proofs for key properties of linearizations for matrix polynomials. 
We also show that every pencil in the double ansatz space is intrinsically connected to a B\'{e}zout matrix, which we use to prove the eigenvalue exclusion theorem.
In addition our exposition allows for any polynomial basis and for any field.
The new viewpoint also leads to new results. We generalize the double ansatz space by exploiting its algebraic interpretation as a space of B\'{e}zout pencils to derive new linearizations with potential applications in the theory of structured matrix polynomials. Moreover, we analyze the conditioning of double ansatz space linearizations in the important practical case of a Chebyshev basis.
\end{abstract}

\begin{keywords}
matrix polynomials; bivariate polynomials; B\'{e}zoutian; double ansatz space;
degree-graded polynomial basis; orthogonal polynomials; conditioning
\end{keywords}

\begin{AMS}
65F15, 15A18, 15A22
\end{AMS}

\section{Introduction}
The paper by Mackey, Mackey, Mehl, and 
 Mehrmann~\cite{Mackey_05_01} introduced three 
important vector spaces of pencils for matrix polynomials: $\mathbb{L}_1(P)$, $\mathbb{L}_2(P)$, 
and $\mathbb{DL}(P)$. In~\cite{Mackey_05_01} the spaces $\mathbb{L}_1(P)$ and $\mathbb{L}_2(P)$ generalize the companion forms of 
the first and second kind, respectively, and the \emph{double ansatz space} is the intersection, $\mathbb{DL}(P) = \mathbb{L}_1(P)\cap\mathbb{L}_2(P)$.  
These vector spaces provide a family of candidate generalized eigenvalue problems for computing the eigenvalues of a matrix polynomial, $P(\lam)$, giving a rich source of \emph{linearizations} for $P(\lambda)$: a classical approach for polynomial eigenvalue problems.

In this article we introduce new viewpoints for these vector spaces.  
We regard a block matrix as coefficients for a bivariate matrix polynomial
 (see Section~\ref{sec:bivariate}), and point out that every pencil in $\mathbb{DL}(P)$ is a (generalized) 
B\'{e}zout matrix
due to Lerer and Tismenetsky~\cite{Lerer_82_01} (see Section~\ref{sec:eigexclusion}).  These novel viewpoints allow us to obtain remarkably elegant proofs 
for many properties of $\mathbb{DL}(P)$ and the eigenvalue exclusion theorem, which previously required rather tedious derivations.
Furthermore, our exposition includes matrix polynomials expressed in any 
polynomial basis, such as the Chebyshev polynomial basis~\cite{Effenberger_11_01,kressner2013memory}. 
We develop a generalization of the double ansatz space (see Section~\ref{sec:beyondDL})
and also discuss extensions to generic algebraic fields, and conditioning analysis (see Section~\ref{sec:conditioning}). 

Let us recall some basic definitions in the theory of matrix polynomials. 
Let $P(\lam) = \sum_{i=0}^k P_i\phi_i(\lam)$ be 
a matrix polynomial expressed in a certain polynomial basis
$\left\{\phi_0,\ldots,\phi_k\right\}$, where $P_k\neq 0$, $P_i\in\mathbb{F}^{n\times n}$, and $\mathbb{F}$ is a field.
 Of particular interest is the case of a degree-graded basis, i.e.,\ 
$\left\{\phi_i\right\}$
 is a polynomial basis where $\phi_j$ is of exact degree $j$.
We assume throughout that  $P(\lambda)$ is regular, i.e.,  $\det P(\lambda)\not\equiv 0$, which ensures the finite eigenvalues of $P(\lam)$ are the roots of the scalar polynomial $\det(P(\lam))$. We note that if the elements of $P_i$ are in the field $\mathbb{F}$ then generally the finite eigenvalues exist in the algebraic closure of $\mathbb{F}$.

Given $X,Y \in \mathbb{F}^{nk\times nk}$ a matrix pencil $L(\lambda)=\lambda X + Y$ 
is a \emph{linearization} for $P(\lam)$ if there 
exist unimodular matrix polynomials $U(\lambda)$ and $V(\lambda)$, i.e., $\det U(\lambda),  \det V(\lambda)$ are nonzero elements of $\mathbb{F}$, such that $L(\lambda)=U(\lambda)\diag(P(\lambda),I_{n(k-1)})V(\lambda)$ and hence, $L(\lambda)$ shares its finite eigenvalues and their partial multiplicities with $P(\lam)$. 
If $P(\lam)$ has a singular leading coefficient, when expressed in a degree-graded basis, then it has an infinite eigenvalue and to preserve 
the partial multiplicities at infinity the matrix pencil $L(\lam)$ needs to be a \emph{strong linearization}, i.e., 
$L(\lam)$ is a linearization for $P(\lam)$ and $\lambda Y+ X$ a linearization for $\lambda^kP(1/\lambda)$. 

In the next section we recall the definitions of $\mathbb{L}_1(P)$, $\mathbb{L}_2(P)$, and $\mathbb{DL}(P)$ allowing for matrix polynomials expressed in any polynomial basis, extending the results in~\cite{Mackey_05_01} given for the monomial basis (such extension was also considered in~\cite{DeTeran_09_01}).
In Section~\ref{sec:bivariate} we consider the same space from a new viewpoint, based on bivariate matrix polynomials, and provide concise 
proofs for properties of $\mathbb{DL}(P)$. Section~\ref{sec:eigexclusion} shows that every 
pencil in $\mathbb{DL}(P)$ is a (generalized) B\'{e}zout matrix and gives an alternative proof of the 
eigenvalue exclusion theorem. In Section~\ref{sec:beyondDL} we generalize the double ansatz space to obtain a new family of linearizations, including new structured 
linearizations for structured matrix polynomials. 
Although these new linearizations are 
 mainly of theoretical interest they show how the new 
viewpoint can be used to derive novel results.
In Section~\ref{sec:conditioning} we analyze the conditioning of the eigenvalues of $\mathbb{DL}(P)$ pencils, and in Section~\ref{sec:construction} we describe a procedure to construct
block symmetric pencils in $\mathbb{DL}(P)$ and B\'{e}zout matrices.

\subsubsection*{Notation}  The expansion $P(\lam) = \sum_{i=0}^k P_i\phi_i(\lam)$ denotes a regular $n\times n$ matrix polynomial of degree $k$ expressed in a polynomial basis $\{\phi_i\}$.  The following vector $\left[\phi_{k-1}(\lam),\phi_{k-2}(\lam),\ldots,\phi_0(\lam)\right]^T$ is denoted by $\Lambda(\lam)$. The $n\times n$ identity matrix is denoted by $I_n$, which we also write as $I$ when the dimension is immediate from the context. 
The superscript $^\B$ represents blockwise transpose: if $X = (X_{ij})_{1\leq i,j\leq k}$, $X_{ij}\in\mathbb{F}^{n\times n}$, then $X^\B = (X_{ji})_{1\leq i,j \leq k}$. 

\section{Vector spaces and polynomial bases}\label{sec:vectorspaces}
Given a matrix polynomial $P(\lam)$ we can define a vector space, denoted by $\mathbb{L}_1(P)$, as~\cite[Def.\ 3.1]{Mackey_05_01}
\[
\mathbb{L}_1(P) = \left\{L(\lam) = \lam X + Y: X, Y \in\mathbb{F}^{nk\times nk}, \ L(\lam)\cdot (\Lambda(\lam)\otimes I_n ) = v\otimes P(\lam), v\in\mathbb{F}^k\right\},
\] 
where $\Lambda(\lam) = \left[\phi_{k-1}(\lam),\phi_{k-2}(\lam),\ldots,\phi_0(\lam)\right]^T$ and $\otimes$ is the matrix Kronecker product. An \emph{ansatz vector}
$v\in\mathbb{F}^k$ generates a family of pencils in $\mathbb{L}_1(P)$, which are generically linearizations for $P(\lam)$~\cite[Thm.~4.7]{Mackey_05_01}.
If $\left\{\phi_0,\ldots,\phi_k\right\}$ is an orthogonal basis, then the comrade form~\cite{Barnett_book,Specht_57_01} 
belongs to $\mathbb{L}_1(P)$ with $v=\left[1,0,\ldots,0\right]^T$.

The action of $L(\lam)=\lam X+Y \in \mathbb{L}_1(P)$ on $(\Lambda(\lam)\otimes I_n )$ can be 
characterized by the \emph{column shift sum} operator, denoted by $\css$~\cite[Lemma 3.4]{Mackey_05_01},
\[
L(\lam)\cdot (\Lambda(\lam)\otimes I_n ) = v\otimes P(\lam) \Longleftrightarrow  X\css Y = v\otimes \left[P_k, P_{k-1},\ldots,P_0\right]. 
\]
In the monomial basis $X\css Y$ can be paraphrased as ``insert a zero column on the right of $X$ and a zero column on the left of $Y$ then add them together'', i.e.,\
\[
X\css Y = \begin{bmatrix}X & \textbf{0}\end{bmatrix} +  \begin{bmatrix}\textbf{0} & Y\end{bmatrix},
\]
where $\textbf{0}\in\mathbb{F}^{nk\times n}$. More generally, given a polynomial basis we define the column shift sum operator as 
\begin{equation}\label{eq:Mcolshift}
X\css Y = X M  + \begin{bmatrix}\textbf{0} & Y\end{bmatrix},
\end{equation}
where $\textbf{0}\in\mathbb{F}^{nk\times n}$, and $M\in\mathbb{F}^{nk\times n(k+1)}$ has block elements $M_{pq} = m_{p,q}I_n$ for $1\leq p\leq k$ and $1\leq q\leq k+1$,
and $m_{p,q}$ is defined via the representation 
\begin{equation}  \label{eq:Mmat}
x\phi_{i-1} = \sum_{j=0}^k m_{k+1-i,k+1-j}\phi_{j}, \quad 1\leq i\leq k.   
\end{equation}
The matrix $M$ has a particularly nice form if the basis is degree-graded, since
the terms with $j>i$ in the above sum are zero. 
Then, the matrix $M$ in~\eqref{eq:Mcolshift} is given by
\begin{equation}\label{eq:defM}
M =\begin{bmatrix}M_{11}&M_{12}&\ldots&M_{1k}&M_{1,k+1}\cr \textbf{0} & M_{22} & \ddots & \ddots &M_{2,k+1} \cr \vdots&\ddots&\ddots&\ddots&\vdots\cr \textbf{0} & \ldots &\textbf{0} &M_{kk}&M_{k,k+1}\end{bmatrix},
\end{equation}
where $M_{pq} = m_{p,q}I_n$, $1\leq p\leq q\leq k+1$, $p\neq k+1$. 
Furthermore, for an orthogonal basis, a three-term recurrence is satisfied and in this case the matrix 
$M$ has only three nonzero block diagonals.
For example, if 
$P(\lambda) \in \mathbb{R}[\lam]^{n \times n}$ is expressed in the Chebyshev basis\footnote{Non-monomial bases can be of significant interest when working with numerical algorithms over some subfield of $\mathbb{C}$. 
For the sake of completeness, we note that in order to define the Chebyshev basis the field characteristics must be different than $2$.} $\left\{T_0(x),\ldots,T_k(x)\right\}$, where
$T_j(x)=\cos\left(j\cos^{-1}x\right)$ for $x\in [-1,1]$, we have
\begin{equation}\label{eq:Morth}
M = \begin{bmatrix}\tfrac{1}{2}I_n & 0 & \tfrac{1}{2}I_n\cr & \ddots &\ddots&\ddots \cr & & \tfrac{1}{2}I_n & 0 & \tfrac{1}{2}I_n\cr & & & I_n& 0\cr \end{bmatrix} \in \mathbb{R}^{nk\times n(k+1)}.
\end{equation}

The properties of the vector space $\mathbb{L}_2(P)$ are analogous to $\mathbb{L}_1(P)$~\cite{Higham_06_01}. If $L(\lam)=\lam X + Y$ is in $\mathbb{L}_2(P)$ then $L(\lam) = \lam X^\B + Y^\B$ belongs to $\mathbb{L}_1(P)$. 
This connection means that the action of $L(\lam)\in\mathbb{L}_2(P)$ is characterized by a \emph{row shift sum} operator, 
denoted by $\rss$, and defined as
\[
X \rss Y = \left(X^\B\css Y^\B\right)^\B = M^\B X + \begin{bmatrix}
\textbf{0}^T\\
Y
\end{bmatrix}.
\]
Here, we used the fact that $(X^\B M)^\B = M^\B X$, which follows from the structure $M_{pq} = m_{p,q}I_n$. 
\subsection{Extending the results to general polynomial bases}\label{sec:genpolybases}
Many of the derivations in~\cite{Mackey_05_01} are specifically for $P(\lam)$ expressed in a monomial basis, though 
the lemmas and theorems can be generalized to any polynomial basis. One approach to generalize~\cite{Mackey_05_01} is to use 
a change-of-basis matrix $S$ such that $\Lambda(\lam) = S[\lambda^{k-1},\dots,\lambda,1]^T$ and to define the mapping (see also~\cite{DeTeran_09_01})
\begin{equation}
  \label{eq:mapc}
\mathcal{C}\left(\hat{L}(\lambda)\right)=\hat{L}(\lambda) (S^{-1} \otimes I_n) = L(\lam),   
\end{equation}
where $\hat{L}(\lam)$ is a pencil in $\mathbb{L}_1(P)$ for the matrix polynomial $P(\lam)$ expressed in the monomial basis with the same ansatz vector as $L(\lambda)$.  
  In particular, the strong linearization theorem holds for any polynomial basis.
\begin{theorem}[Strong Linearization Theorem]\label{thm:linearization}
Let $P(\lam)$ be a regular matrix polynomial (expressed in any 
polynomial basis), and let $L(\lam)\in\mathbb{L}_1(P)$. Then 
the following statements are equivalent:
\begin{enumerate}
\item $L(\lam)$ is a linearization for $P(\lam)$,
\item $L(\lam)$ is a regular pencil, and
\item $L(\lam)$ is a strong linearization for $P(\lam)$.
\end{enumerate}
\end{theorem}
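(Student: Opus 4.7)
The plan is to prove the chain $3 \Rightarrow 1 \Rightarrow 2 \Rightarrow 3$, reducing the nontrivial implication $2 \Rightarrow 3$ to the monomial-basis version of the theorem from~\cite{Mackey_05_01} via the mapping $\mathcal{C}$ introduced in~\eqref{eq:mapc}. The implication $3 \Rightarrow 1$ is immediate from the definition of strong linearization. For $1 \Rightarrow 2$, the factorization $L(\lambda) = U(\lambda)\diag(P(\lambda), I_{n(k-1)})V(\lambda)$ with $\det U, \det V$ nonzero elements of $\mathbb{F}$ yields $\det L(\lambda) = c\,\det P(\lambda)$ for some nonzero $c \in \mathbb{F}$; this is not identically zero because $P$ is regular, so $L$ is a regular pencil.

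For $2 \Rightarrow 3$, let $S$ be the (invertible) change-of-basis matrix satisfying $\Lambda(\lambda) = S[\lambda^{k-1},\ldots,\lambda,1]^T$, and define $\hat{L}(\lambda) := L(\lambda)(S \otimes I_n)$. The Kronecker mixed-product identity $\Lambda(\lambda) \otimes I_n = (S \otimes I_n)\big([\lambda^{k-1},\ldots,1]^T \otimes I_n\big)$ makes it straightforward to verify that $\hat{L} \in \mathbb{L}_1(\hat{P})$ with the same ansatz vector $v$, where $\hat{P}(\lambda) = P(\lambda)$ denotes the same polynomial viewed in the monomial basis. Since $S \otimes I_n$ is a nonsingular constant matrix, $\hat{L}$ is regular whenever $L$ is. Applying the monomial-basis Strong Linearization Theorem from~\cite{Mackey_05_01} then produces unimodular factorizations of $\hat{L}(\lambda)$ and its reversal $\mathrm{rev}(\hat{L})(\lambda) := \lambda \hat{Y}+\hat{X}$ exhibiting $\diag(\hat{P}, I_{n(k-1)})$ and $\diag(\mathrm{rev}(\hat{P}), I_{n(k-1)})$ in the middle, respectively.

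To transfer these factorizations back to $L$, I would right-multiply them by $S^{-1} \otimes I_n$. The new rightmost factors remain unimodular because $\det(S^{-1} \otimes I_n)$ is a nonzero constant in $\mathbb{F}$. The reversal side relies on the elementary observation that reversal commutes with right-multiplication by a constant matrix, so $\mathrm{rev}(L) = \mathrm{rev}(\hat{L})(S^{-1} \otimes I_n)$; combined with the fact that $\mathrm{rev}(\hat{P}) = \mathrm{rev}(P)$ as polynomials, this shows that both $L$ and $\mathrm{rev}(L)$ admit the required linearization factorizations, proving that $L$ is a strong linearization for $P$. The main obstacle I anticipate is the bookkeeping in checking that $\mathcal{C}$ intertwines the ansatz conditions of $\mathbb{L}_1(P)$ and $\mathbb{L}_1(\hat{P})$ while preserving both the ansatz vector and the underlying polynomial; once this is settled, the rest of the argument is essentially a formal consequence of the monomial case.
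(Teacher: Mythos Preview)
Your proposal is correct and follows essentially the same approach as the paper: reduce to the monomial-basis theorem of~\cite{Mackey_05_01} via the strict equivalence $L(\lambda)\mapsto L(\lambda)(S\otimes I_n)$. The paper's version is more compressed---it simply notes that a strict equivalence preserves each of the three properties individually, so their equivalence transfers from $\hat L$ to $L$ in one stroke---whereas you spell out the chain $3\Rightarrow 1\Rightarrow 2\Rightarrow 3$ and handle the reversal bookkeeping explicitly; but the substantive idea is identical.
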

\begin{proof} It is a corollary of~\cite[Theorem 4.3]{Mackey_05_01}. In fact, the mapping $\mathcal{C}$ in~\eqref{eq:mapc} is a strict equivalence between $\mathbb{L}_1(P)$ expressed in the monomial basis and $\mathbb{L}_1(P)$ expressed in another polynomial basis. 
Therefore, $L(\lam)$ has one of the three properties if and only if $\hat{L}(\lam)$ also does, and the properties are equivalent for $\hat{L}(\lam)$ because they are 
equivalent for $L(\lam)$.
\end{proof}

This strict equivalence can be used to generalize many properties of $\mathbb{L}_1(P)$, $\mathbb{L}_2(P)$, and $\mathbb{DL}(P)$, including~\cite[Thm.~4.7]{Mackey_05_01}, which shows that a pencil from $\mathbb{L}_1(P)$ is generically a linearization; however, our approach based on bivariate polynomials allows for more concise derivations.

\section{Recasting to bivariate matrix polynomials}\label{sec:bivariate}
A block matrix $X\in\mathbb{F}^{nk\times nh}$ with $n\times n$ blocks can provide the 
coefficients 
in the basis $\{\phi_i\}$ 
for a bivariate matrix polynomial of degree 
$h-1$ in $x$ and $k-1$ in $y$. 
Let $\phi:\mathbb{F}^{nk\times nh}\rightarrow \mathbb{F}_{h-1}^{n\times n}[x]\times\mathbb{F}^{n\times n}_{k-1}[y]$ be the mapping defined by
\begin{equation}  \label{eq:phidef}
\phi \ : \ X = \begin{bmatrix}X_{11}  & \dots & X_{1h} \cr \vdots &\ddots &\vdots \cr X_{k1} & \ldots & X_{kh}\end{bmatrix}, \hbox{ }X_{ij}\in\mathbb{F}^{n\times n} \mapsto F(x,y) = \sum_{i=0}^{k-1}\sum_{j=0}^{h-1} X_{k-i,h-j}\phi_i(y)\phi_j(x).  
\end{equation}
Equivalently, we may define the map as follows:
\[ \phi\ : \ X = \begin{bmatrix}X_{11}  & \dots & X_{1h} \cr \vdots &\ddots &\vdots \cr X_{k1} & \ldots & X_{kh}\end{bmatrix} \mapsto F(x,y) = \begin{bmatrix}\phi_{k-1}(y) I &\cdots & \phi_0(y) I \end{bmatrix} X \begin{bmatrix}
\phi_{h-1}(x) I\\
\vdots\\
\phi_0(x) I
\end{bmatrix}.\]

Usually, and unless otherwise specified, we will apply  the map $\phi$ to square block matrices so that $h=k$.

We recall that a regular (matrix) polynomial $P(\lam)$ expressed in a degree-graded basis has an infinite eigenvalue if its 
leading matrix coefficient is singular. In order to correctly take care of infinite eigenvalues we 
write $P(\lambda)=\sum_{i=0}^g P_i \phi_i(\lambda)$, where 
the integer $g\geq k$ is called the \emph{grade}~\cite{Mackey_11_01}. If the grade of $P(\lam)$ is larger
than the degree then $P(\lam)$ has at least $n$ infinite eigenvalues. Usually, and unless stated otherwise, the grade is 
equal to the degree.

It is easy to show that the mapping $\phi$ is a bijection between $h \times k$ block matrices with $n\times n$ blocks
and $n\times n$ bivariate matrix polynomials of grade $h-1$ in $x$ and grade 
$k-1$ in $y$. Even more, $\phi$ is an isomorphism preserving the group additive structure. We omit the trivial proof.

Many matrix operations can be interpreted as functional operations via the above described duality between block matrices and their 
continuous analogues (see, for example,~\cite{townsend2013extension}). Bivariate matrix polynomials allow us to interpret many matrix operations in terms of functional operations. In many instances, existing 
proofs in the theory of linearizations of matrix polynomials can be  simplified, and throughout the paper we will often exploit this parallelism.
We summarize some computation rules in Table~\ref{tab:op}. We hope the table will be useful not only in this paper, but also for future work. All the rules are valid for any basis and for any field $\mathbb{F}$, except the last row that assumes $\mathbb{F}=\mathbb{C}$.

\begin{table}[htbp]
  \centering
  \caption{Correspondence between operations in the matrix $X$ and the bivariate polynomial viewpoints regarding $F(x,y)$.
The matrix $M$ is from~\eqref{eq:Mmat}, and $v,w\in\mathbb{F}^{k}$ are vectors.
}
  \label{tab:op}
\begin{tabular}{c|c}
Block matrix operation & Bivariate polynomial operation\\
\hline
Block matrix $X$ & Bivariate polynomial $F(x,y)$\\
 
$X \mapsto X M$ & $F(x,y) \mapsto F(x,y) x$\\
 
$X \mapsto M^{\mathcal{B}} X$ & $F(x,y) \mapsto y F(x,y)$ \\
 
$X (\Lambda(\lambda) \otimes I)$ & Evaluation at $x=\lambda$: $F(\lambda,y)$\\
 
$X (\Lambda(\lambda) \otimes v)$ & $F(\lambda,y) v$\\
 
$(\Lambda^T(\mu) \otimes w^T) X$ & 
 $w^TF(x,\mu)$\\
 
$(\Lambda^T(\mu) \otimes w^T) X (\Lambda(\lambda) \otimes v)$ & $w^TF(\lambda,\mu) v$\\
 
$X \mapsto X^\B$ & $F(x,y) \mapsto F(y,x)$\\

 $X \mapsto X^T$ & $F(x,y) \mapsto F^T(y,x)$\\
 
$X \mapsto X^*$ & $F(x,y) \mapsto F^*(y,x)$\\
\end{tabular}
\end{table}

Other computational rules exist when the basis has additional properties. We give some examples in Table~\ref{tab:op2}, in which
\begin{equation}  \label{eq:sigR}
\Sigma=
\small
\begin{bmatrix} \ddots & & & \\
& I & & \\
& & -I &\\
& & & I
\end{bmatrix}
\normalsize
, \quad R=\begin{bmatrix} & & I\\
& \iddots & \\
I & &
\end{bmatrix}  ,
\end{equation}
and we say that a polynomial basis is alternating if $\phi_{i}(x)$ is even (odd) when $i$ is even (odd).

\begin{table}[htbp]
  \centering
  \caption{
Correspondence when the polynomial basis is alternating or the monomial basis. 
}
  \label{tab:op2}
\begin{tabular}{c|c|c}

Type of basis & Block matrix operation & Bivariate polynomial operation\\
\hline
Alternating & $X \mapsto \Sigma X$ & $F(x,y) \mapsto F(x,-y)$\\

Alternating & $X \mapsto X \Sigma$ & $F(x,y) \mapsto F(-x,y)$\\

Monomials & $X \mapsto R X$ & $F(x,y) \mapsto y^{k-1}F(x,y^{-1})$\\

Monomials & $X \mapsto X R$ & $F(x,y) \mapsto x^{h-1}F(x^{-1},y)$\\

\end{tabular}
\end{table}

As seen in Table~\ref{tab:op}, the matrix $M$ in~\eqref{eq:Mcolshift} is such that the bivariate matrix polynomial corresponding to the coefficients $XM$ is 
$F(x,y)x$, i.e., $M$ applied on the right of $X$ represents multiplication of $F(x,y)$ by $x$. 
This gives an equivalent definition for the column shift sum operator: if the block matrices $X$ and $Y$ are the coefficients for 
$F(x,y)$ and $G(x,y)$ then the coefficients of $H(x,y)$ are $Z$, where 
\[
Z =  X\css Y,\qquad H(x,y) = F(x,y)x + G(x,y).
\]
This gives a characterization of the $\mathbb{L}_1(P)$ space from the bivariate polynomial viewpoint as pencils 
$L(\lam) = \lam X + Y$ 
such that with the mapping $\phi$, we have $F(x,y)x + G(x,y) = v(y)P(x)$. 
The ansatz vector is $v=[v_{k-1},\ldots,v_1,v_0]^T$, where $v(y) = \sum_{i=0}^{k-1}v_i\phi_i(y)$. 

Regarding the space $\mathbb{L}_2(P)$, the coefficient matrix $M^\B X$ corresponds to the bivariate matrix polynomial $yF(x,y)$, i.e., $M^\B$ applied on the left of $X$ represents multiplication of $F(x,y)$ by $y$. 
We have thus derived the following result (here and below, with 
a slight abuse of notation, we use $v$ to denote both the ansatz polynomial and the ansatz vector). 
\begin{lemma}\label{lem:L1L2}
For an $n\times n$ matrix polynomial $P(\lambda)$ of degree $k$, 
the space $\mathbb{L}_1(P)$ can be written as 
\[
\mathbb{L}_1(P) = \left\{L(\lam) = \lam X + Y: 
 F(x,y)x + G(x,y) = v(y)P(x), v\in\Pi_{k-1}(\mathbb{F})\right\},
\]  
where 
 $\Pi_{k-1}(\mathbb{F})$ is the space of polynomials in $\mathbb{F}[y]$ of degree at most $k-1$, and 
$F(x,y),G(x,y)$ are defined using $X,Y$ by the mapping~\eqref{eq:phidef}, and, writing $v(y) = \sum_i^{k-1}v_i\phi_i(y)$, $v=[v_{k-1},\ldots,v_1,v_0]^T$ is the ansatz vector. 
Similarly, 
\[
\mathbb{L}_2(P) = \left\{L(\lam) = \lam X + Y: 
yF(x,y) + G(x,y) = P(y)w(x), w\in\Pi_{k-1}(\mathbb{F})\right\}.
\]
\end{lemma}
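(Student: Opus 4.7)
The plan is to translate the existing characterization of $\mathbb{L}_1(P)$ in terms of the column shift sum operator into the bivariate polynomial language, by applying the map $\phi$ from~\eqref{eq:phidef}. The starting point is the equivalence (recalled at the beginning of Section~\ref{sec:bivariate} and coming from~\cite[Lemma 3.4]{Mackey_05_01}) that states $L(\lam) = \lam X + Y \in \mathbb{L}_1(P)$ if and only if $X \css Y = v \otimes [P_k, P_{k-1}, \ldots, P_0]$ for some $v \in \mathbb{F}^k$. Both sides of this equation live in $\mathbb{F}^{nk \times n(k+1)}$, so I would extend the bijection $\phi$ in the natural way to rectangular $nk \times n(k+1)$ block matrices (viewing the images as bivariate matrix polynomials of grade $k$ in $x$ and $k-1$ in $y$).

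Next, I would translate each side. For the left-hand side, the observation already made in the paragraph immediately preceding the lemma gives $\phi(X \css Y) = F(x,y)\,x + G(x,y)$: this uses $\phi(XM) = F(x,y)\,x$ from Table~\ref{tab:op}, together with the fact that prepending a zero block column to $Y$ merely augments $G(x,y)$ with a vanishing $\phi_k(x)$-coefficient and therefore $\phi([\mathbf{0}\;\, Y]) = G(x,y)$. For the right-hand side, a direct application of the definition~\eqref{eq:phidef} shows that the coefficient block matrix $v \otimes [P_k, \ldots, P_0]$ maps under $\phi$ to the separable product $v(y) P(x)$, where $v(y) = \sum_{i=0}^{k-1} v_i \phi_i(y)$ with $v = [v_{k-1}, \ldots, v_1, v_0]^T$. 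This follows because the Kronecker structure factors the double sum in~\eqref{eq:phidef} into a product of a sum over $y$-indices and a sum over $x$-indices.

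By the bijectivity of $\phi$ and its compatibility with addition, the two characterizations are therefore equivalent, which is the first claim. The second claim for $\mathbb{L}_2(P)$ follows analogously: using the row shift sum representation $X \rss Y = M^\B X + [\mathbf{0}^T;\, Y]$ together with the Table~\ref{tab:op} rule $\phi(M^\B X) = y F(x,y)$, the defining identity for $\mathbb{L}_2(P)$ becomes $y F(x,y) + G(x,y) = P(y) w(x)$, where $w \in \Pi_{k-1}(\mathbb{F})$ is the $\mathbb{L}_2$ ansatz polynomial. Alternatively, one could obtain the $\mathbb{L}_2$ statement by applying the $\B$-transpose and invoking the Table~\ref{tab:op} correspondence $\phi(X^\B)(x,y) = F(y,x)$.

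I do not expect any genuine obstacle: the whole argument is a dictionary translation. The only point that requires a small calculation is verifying that $v \otimes [P_k, \ldots, P_0]$ coincides with the coefficient array of the rank-one bivariate polynomial $v(y) P(x)$ under $\phi$, and in particular keeping the reverse-order indexing convention $v = [v_{k-1}, \ldots, v_0]^T$ consistent with the definitions of $\Lambda(\lam)$ and $\phi$.
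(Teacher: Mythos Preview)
Your proposal is correct and follows essentially the same approach as the paper: the lemma is stated in the paper as a summary of the discussion immediately preceding it, which consists precisely of translating the column (resp.\ row) shift sum characterization of $\mathbb{L}_1(P)$ (resp.\ $\mathbb{L}_2(P)$) through the bijection $\phi$, using $\phi(XM)=F(x,y)x$ and $\phi(M^\B X)=yF(x,y)$. Your write-up is somewhat more explicit about the right-hand side $\phi(v\otimes[P_k,\ldots,P_0])=v(y)P(x)$ and the indexing conventions, but the substance is identical.
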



The space $\mathbb{DL}(P)$ is the intersection of $\mathbb{L}_1(P)$ and $\mathbb{L}_2(P)$.
It is an important vector space because it contains block symmetric linearizations. 
By Lemma~\ref{lem:L1L2}, 
a pencil $L(\lam) = \lam X + Y$ belongs to $\mathbb{DL}(P)$ with ansatze $v(y)$ and $w(x)$ if the 
following $\mathbb{L}_1(P)$ and $\mathbb{L}_2(P)$ conditions are satisfied:
\begin{equation}\label{eq:DLrelations}
F(x,y)x + G(x,y) = v(y)P(x), \qquad yF(x,y) + G(x,y) = P(y)w(x).
\end{equation}
It appears that $v(y)$ and $w(x)$ could be chosen independently; however, if we substitute  $y=x$ into~\eqref{eq:DLrelations} we obtain the \emph{compatibility condition}
\[
v(x)P(x) = F(x,x)x + G(x,x) = xF(x,x) + G(x,x) = P(x)w(x)
\]
and hence, $v=w$ as elements of $\Pi_{k-1}(\mathbb{F})$ since $P(x)(v(x)-w(x))$ is the zero matrix.
This shows the double ansatz space is actually a single ansatz space; a fact that required two quite technical proofs in~\cite[Prop.~5.2, Thm.~5.3]{Mackey_05_01}.

The bivariate matrix polynomials $F(x,y)$ and $G(x,y)$ are uniquely defined by the ansatz $v(x)$ since they satisfy the explicit formulas
\begin{equation}\label{eq:F}
yF(x,y) - F(x,y)x = P(y)v(x) - v(y)P(x),
\end{equation}
\begin{equation}\label{eq:G}
yG(x,y)-G(x,y)x = yv(y)P(x) - P(y)v(x)x.
\end{equation}
In other words, there is an isomorphism between $\Pi_{k-1}(\mathbb{F})$ and $\mathbb{DL}(P)$. It also follows from~\eqref{eq:F} and~\eqref{eq:G} that $F(x,y) = F(y,x)$ and $G(x,y)=G(y,x)$. This shows 
that all the pencils in $\mathbb{DL}(P)$ are block symmetric.  Furthermore, if $F(x,y)$ and $G(x,y)$ are symmetric and satisfy
$F(x,y)x + G(x,y) = P(x)v(y)$ then we also have $F(y,x)x + G(y,x) = P(x)v(y)$, and by swapping $x$ and $y$ we obtain the $\mathbb{L}_2(P)$ condition,
$yF(x,y) + G(x,y) = P(y)v(x)$. This shows all block symmetric pencils in $\mathbb{L}_1(P)$ belong to $\mathbb{L}_2(P)$ and hence, 
also belong to $\mathbb{DL}(P)$. Thus, $\mathbb{DL}(P)$ is the space of block symmetric pencils in 
$\mathbb{L}_1(P)$~\cite[Thm.\ 3.4]{Higham_06_01}.  

\begin{remark}Although in this paper we do not consider singular matrix polynomials, we note that the analysis of this section still holds even if we drop the assumption that $P(x)$ is regular. We only need to assume $P(x) \not \equiv 0$ in our proof that $\mathbb{DL(P)}$ is in fact a single ansatz space, which is no loss of generality because $\mathbb{DL}(0)=\{0\}$.
\end{remark}

\section{Eigenvalue exclusion theorem and B\'{e}zoutians}\label{sec:eigexclusion}
The eigenvalue exclusion theorem~\cite[Thm.~6.7]{Mackey_05_01} shows
that if $L(\lam)\in\mathbb{DL}(P)$ with ansatz
$v\in\Pi_{k-1}(\mathbb{F})$, then $L(\lam)$ is a linearization for the
matrix polynomial $P(\lam)$ if and only if $v(\lam)I_n$ and $P(\lam)$ do not share an
eigenvalue.  This theorem is important because, generically, $v(\lam)I_n$ and $P(\lam)$ do not 
share eigenvalues and almost all choices for $v\in\Pi_{k-1}(\mathbb{F})$ correspond
to linearizations in $\mathbb{DL}(P)$ for $P(\lam)$.

\begin{theorem}[Eigenvalue Exclusion Theorem]\label{thm:eigexclusion}
  Suppose that $P(\lam)$ is a regular matrix polynomial of degree $k$ and $L(\lam)$ is in
  $\mathbb{DL}(P)$ with a nonzero ansatz polynomial $v(\lambda)$. Then,
  $L(\lam)$ is a linearization for $P(\lam)$ if and only if $v(\lam) I_n$ (with grade $k-1$) and
  $P(\lam)$ do not share an eigenvalue.
\end{theorem}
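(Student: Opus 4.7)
The plan is to leverage the bivariate reformulation of Section~\ref{sec:bivariate} together with the identification of $\mathbb{DL}(P)$ pencils as generalized \bezout\ matrices to bypass the intricate calculations of~\cite{Mackey_05_01}. By the Strong Linearization Theorem~\ref{thm:linearization}, it suffices to show that $L(\lambda) = \lambda X + Y$ is a regular pencil if and only if $v(\lambda) I_n$ and $P(\lambda)$ share no eigenvalue. Setting $F = \phi(X)$ and $G = \phi(Y)$, the two $\mathbb{DL}(P)$ equations~\eqref{eq:DLrelations} subtract to the generalized \bezoutian\ identity~\eqref{eq:F}, which exhibits $L$ as the \bezout\ pencil of $P$ and $v I_n$.

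For the ``only if'' direction, I treat a shared finite eigenvalue; the infinite case is analogous after reversal. Choose $\lambda_0 \in \overline{\mathbb{F}}$ and $0 \neq u \in \overline{\mathbb{F}}^n$ with $v(\lambda_0) = 0$ and $P(\lambda_0) u = 0$. Setting $x = \lambda_0$ in~\eqref{eq:F} and right-multiplying by $u$ yields $(y - \lambda_0) F(\lambda_0, y) u \equiv 0$, hence $F(\lambda_0, y) u \equiv 0$ as a polynomial in $y$; by the dictionary of Table~\ref{tab:op}, this is precisely $X(\Lambda(\lambda_0) \otimes u) = 0$. The analogous substitution in the first equation of~\eqref{eq:DLrelations} gives $G(\lambda_0, y) u \equiv 0$, i.e., $Y(\Lambda(\lambda_0) \otimes u) = 0$. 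Therefore $L(\lambda)(\Lambda(\lambda_0) \otimes u) \equiv 0$ for all $\lambda$, and since $\Lambda(\lambda_0) \otimes u \neq 0$, the pencil $L$ is singular.

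For the ``if'' direction, assume $P$ and $v I_n$ share no eigenvalue. I invoke the regularity theorem for generalized \bezout\ pencils~\cite{Lerer_82_01}: when $P$ and $v I_n$ are coprime in the above sense, the \bezout\ pencil defined by~\eqref{eq:F} is regular, and so $L$ is a linearization by Theorem~\ref{thm:linearization}.

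The main obstacle is the sufficiency direction. The naive analog of the scalar statement ``$\det X \neq 0$ iff $v,P$ coprime'' fails in the matrix case: whenever $P_k$ is singular, comparing leading $x$-coefficients in~\eqref{eq:F} shows that $X(e_1 \otimes u) = 0$ for every $u \in \ker P_k$, regardless of any shared finite eigenvalue. Thus one must reason about the regularity of the entire pencil $L$, not merely the invertibility of $X$. A self-contained bivariate proof would encode a polynomial right null vector of $L$ into an identity derived from~\eqref{eq:F} and, in the spirit of the necessity argument, extract a shared (possibly infinite) eigenvalue, thereby completing the contrapositive.
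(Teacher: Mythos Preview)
Your ``only if'' direction is correct and in fact more elementary than the paper's: where the paper invokes the Lerer--Tismenetsky kernel formula~\eqref{eq:kerbezout} and Lemma~\ref{lem:gcr} for both directions, you obtain the singularity of $L$ directly from the bivariate identities~\eqref{eq:F} and~\eqref{eq:DLrelations}, producing the explicit null vector $\Lambda(\lambda_0)\otimes u$. This is a genuine simplification for that half of the argument.

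The ``if'' direction, however, has a real gap, and you essentially acknowledge it. Your appeal to a ``regularity theorem for generalized \bezout\ pencils'' in~\cite{Lerer_82_01} is not well-posed: the Lerer--Tismenetsky result~\eqref{eq:kerbezout} characterizes the kernel of a single \bezout\ \emph{matrix}, not the regularity of a matrix \emph{pencil}. Treating $X$ and $Y$ separately as \bezout\ matrices of $(vI,P)$ and $(P,xvI)$ does not give you what you need, as your own remark about $\ker P_k$ shows.

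The paper's key device, which you are missing, is Lemma~\ref{lem:Lbzefunc}: for each \emph{fixed} value of $\lambda$, the entire matrix $L(\lambda)=\lambda X+Y$ is itself a single Lerer--Tismenetsky \bezout\ matrix, namely $L(\lambda)=B\bigl(P,(x-\lambda)vI\bigr)$, obtained by bilinearity from $\lambda B(vI,P)+B(P,xvI)$. Once you have this, the Lerer--Tismenetsky kernel formula~\eqref{eq:kerbezout} applies to $L(\lambda)$ directly: if $P$ and $vI$ share no eigenvalue and $\lambda$ is chosen not to be an eigenvalue of $P$, then $P$ and $(x-\lambda)vI$ share no eigenpair, so $L(\lambda)$ is nonsingular for that $\lambda$, hence $L$ is regular. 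This ``absorb $\lambda$ into the second argument of the \bezoutian'' trick is precisely the missing idea that closes your sufficiency argument.
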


We note that the last statement also includes infinite eigenvalues. 
In the following we will observe that any
$\mathbb{DL}(P)$ pencil is a (generalized) B\'{e}zout matrix and expand on this theme.  
This observation tremendously simplifies the proof of Theorem~\ref{thm:eigexclusion} and the connection with 
the classical theory of B\'{e}zoutian (for the scalar case) and the Lerer--Tismenetsky B\'{e}zoutian (for the matrix case) 
allows us to further our understanding of 
 the $\mathbb{DL}(P)$ vector space, and leads to a new vector space of linearizations. 
 We first
recall the definition of a B\'{e}zout matrix and B\'{e}zoutian function for scalar polynomials (see~\cite[p.~277]{Bernstein_09_01} and~\cite[sec.~2.9]{Bini_94_01}).

\begin{definition}[B\'{e}zout matrix and B\'{e}zoutian function]\label{def:Bezout}
  Let $p_1(x)$ and $p_2(x)$ be scalar polynomials
  \[
  p_1(x) = \sum_{i=0}^k a_i\phi_i(x), \qquad p_2(x) =
  \sum_{i=0}^{k} c_i\phi_i(x),
  \]
\emph{(}$a_k$ and $c_k$ can be zero as we regard $p_1(x)$ and $p_2(x)$ as polynomials of grade $k$\emph{)}, then the B\'{e}zoutian 
function associated with $p_1(x)$ and $p_2(x)$ is 
 the bivariate function 

\[
\mathcal{B}(p_1,p_2)
=\sum_{i,j=1}^k
  b_{ij}\phi_{k-i}(y)\phi_{k-j}(x):=  \frac{p_1(y)p_2(x)-p_2(y)p_1(x)}{x-y}.
\]
The $k\times k$ B\'{e}zout matrix 
 associated to $p_1(x)$ and $p_2(x)$ is defined via the 
 coefficients of the B\'{e}zoutian function
 \begin{equation}
   \label{eq:bezmat}
  B(p_1,p_2) = \left(b_{ij}\right)_{1\leq i,j\leq k}.    
 \end{equation}
\end{definition}

Here are some standard properties of a B\'{e}zoutian function and B\'{e}zout matrix:

\begin{enumerate}
\item The B\'{e}zoutian function is skew-symmetric with respect to its polynomial arguments: $\mathcal{B}(p_1,p_2)=-\mathcal{B}(p_2,p_1)$. 
\item 
$\mathcal{B}(p_1,p_2)$ 
 is bilinear with respect to its polynomial
  arguments.
\item $B(p_1,p_2)$ is nonsingular if and only if $p_1$ and $p_2$
  have no common roots.
\item $B(p_1,p_2)$ is a symmetric matrix.
\end{enumerate}

Property 3 holds for polynomials whose coefficients lie in \emph{any} field $\mathbb{F}$, provided that the common roots are sought after in the algebraic closure of $\mathbb{F}$ and roots at infinity are included. Note in fact that the dimension of the B\'{e}zout matrix depends on the formal choice of the grade of $p_1$ and $p_2$. Unusual choices of the grade are not completely artificial: for example, they may arise when evaluating a bivariate polynomial along $x=x_0$ forming a univariate polynomial~\cite{Nakatsukasa_13_01}. 
Moreover, it is 
important to be aware that common roots at infinity make the B\'{e}zout matrix singular. 

\begin{example}
Consider the finite field $F_2=\{0,1\}$ and let $p_1=x^2$ and $p_2=x+1$, whose finite roots are counting multiplicity $\{0,0\}$ and $\{1\}$, respectively. The 
B\'{e}zout function is $x+y+xy$. If $p_1$ and $p_2$ are seen as grade $2$, the B\'{e}zout matrix (in the monomial basis) is $\begin{bmatrix}
1 & 1\\
1 & 0
\end{bmatrix}$, which is nonsingular and has a determinant of $1$. This is expected as $p_1$ and $p_2$ have no shared root. If $p_1$ and $p_2$ are seen as grade $3$ the B\'{e}zout matrix becomes $\begin{bmatrix}
0 & 0 & 0\\
0 & 1 & 1\\
0 &1 & 0
\end{bmatrix}$, whose kernel is spanned by $\begin{bmatrix}
1 & 0 & 0
\end{bmatrix}^T$. Note indeed that if the grade is $3$ then the roots are, respectively, $\{\infty,0,0\}$ and $\{\infty,\infty,1\}$, so $p_1$ and $p_2$ share a root at $\infty$.
\end{example}

To highlight the connection with the classic B\'{e}zout matrix we first 
consider scalar polynomials
 and show that the eigenvalue exclusion theorem immediately follows from the connection with B\'{e}zoutians.
\begin{proof}[Proof of Theorem~\ref{thm:eigexclusion} for $n=1$]
Let $p(\lam)$ be a scalar polynomial of degree (and grade) $k$ 
and $v(\lam)$ a scalar polynomial of degree at most $k-1$. 
  We first solve the relations in~\eqref{eq:F} and~\eqref{eq:G} to obtain
  \begin{equation}
    \label{eq:fgscalar}
  F(x,y) = \frac{p(y)v(x)-v(y)p(x)}{y-x}, \qquad G(x,y) =
  \frac{yv(y)p(x)-p(y)v(x)x}{y-x}    
  \end{equation}
  and thus, by Definition~\ref{def:Bezout},
  $F(x,y)=\mathcal{B}(v,p)$ and 
  $G(x,y)=\mathcal{B}(p,vx)$. 
Moreover, $\mathcal{B}$ is skew-symmetric and bilinear
  with respect to its polynomial arguments so
  \begin{equation}    \label{eq:Lbez}
  L(\lam) = \lam X + Y = \lam B(v,p) + B(p,xv) = -\lam B(p,v) +
  B(p,xv) =B(p,(x-\lam)v).    
  \end{equation}
 Since $B$ is a B\'{e}zout matrix, $\det(L(\lam))=\det(B(p,(x-\lam)v))\neq 0$ for some $\lam$  if and only if $p$ and $v$ do not share a root, which, by Theorem~\ref{thm:linearization}, is equivalent to $L(\lam)$ being a linearization. 
  \end{proof}
  
  An alternative (more algebraic) argument is to note that $p$ and $(x-\lam)v$ are polynomials in $x$ whose coefficients lie in the field of fractions $\mathbb{F}(\lam)$. Since $p$ has coefficients in the subfield $\mathbb{F} \subset \mathbb{F}(\lam)$, its roots lie in the algebraic closure of $\mathbb{F}$, denoted by $\overline{\mathbb{F}}$. The factorization $(x-\lam)v$ similarly reveals that this polynomial has one root at $\lam$, while all the others lie in $\overline{\mathbb{F}} \cup \{\infty\}$. Therefore, $p$ and $(x-\lam)v$ share a root in the closure of $\mathbb{F}(\lam)$ if and only if $p$ and $v$ share a root in $\overline{\mathbb{F}}$. Our proof of the eigenvalue exclusion theorem is purely algebraic and holds without any assumption on the field $\mathbb{F}$. However, as noted by Mehl~\cite{privcom}, if $\mathbb{F}$ is finite it could happen that no pencil in $\mathbb{DL}$ is a linearization, because there are only finitely many choices available for the ansatz 
polynomial $v$. Although this approach is extendable to any field, for simplicity of exposition we assume for the rest of this section that the underlying field is $\mathbb{C}$.
  
  A natural question at this point is whether this approach generalizes to the matrix case ($n>1$). An appropriate generalization of the scalar B\'{e}zout matrix should:  
  \begin{itemize}
  \item depend on two matrix polynomials $P^{(1)}$ and $P^{(2)}$;
  \item have nontrivial kernel if and only if $P^{(1)}$ and $P^{(2)}$ share an eigenvalue and the corresponding eigenvector (note that for scalar polynomials the only possible eigenvector is $1$, up to multiplicative scaling).
  \end{itemize}
  
  The following examples show that the most  straightforward ideas fail to satisfy the second property above.  
  \begin{example}\label{ex:naive}
Note that the most na\"{i}ve idea, i.e., 
$\frac{P^{(1)}(y)P^{(2)}(x)-P^{(2)}(y)P^{(1)}(x)}{x-y}$,
is generally not even a matrix polynomial (its elements are rational functions).

Almost as straightforward is the  generalization $\frac{P^{(1)}(y)P^{(2)}(x)-P^{(1)}(x)P^{(2)}(y)}{x-y}$, which is indeed a bivariate matrix polynomial. However, consider the associated B\'{e}zout block matrix. 
Let us check that it does not satisfy the property of being singular if and only if $P^{(1)}$ and $P^{(2)}$ have a shared eigenpair by providing two examples over the field $\mathbb{Q}$ and in the monomial basis. Consider first $P^{(1)}=\begin{bmatrix}
 x & 0\\
 0 & x-1
 \end{bmatrix}$ and $P^{(2)}=\begin{bmatrix}
 x-6 & -1\\
 12 & x+1
 \end{bmatrix}$. $P^{(1)}$ and $P^{(2)}$ have disjoint spectra. The corresponding B\'{e}zout matrix is $\begin{bmatrix}
 6 & 1\\
 -12 & -2
 \end{bmatrix}$, which is singular. Conversely, let
$P^{(1)}=\begin{bmatrix}
  x & 1\\
  0 & x
  \end{bmatrix}$ and $P^{(2)}=\begin{bmatrix}
  0 & x\\
  x & 1
  \end{bmatrix}$. Here, $P^{(1)}$ and $P^{(2)}$ share the eigenpair $\{0,\begin{bmatrix}
  1 & 0
  \end{bmatrix}^T\}$, but the corresponding B\'{e}zout matrix is $\begin{bmatrix}
  1 & 0\\
  0 & -1
  \end{bmatrix}$, which is nonsingular.
  \end{example}

  Fortunately, an extension of the B\'{e}zoutian to the matrix case was studied in the 1980s by Lerer, Tismenetsky, and others, see, e.g.,~\cite{Anderson_76_01, Lerer_82_01, LT2} and the references therein.  It turns out that it provides exactly the generalization that we need.
  
  \begin{definition}\label{def:bezmatpoly}
   For $n \times n$ regular matrix polynomials $P^{(1)}(x)$ and $P^{(2)}(x)$ of grade $k$, 
let $M^{(1)}(x)$ and $M^{(2)}(x)$ be regular matrix polynomials selected
so that $M^{(1)}(x)P^{(1)}(x)=M^{(2)}(x)P^{(2)}(x)$. 
Then, denoting the maximal degree of $M^{(1)}(x)$ and $M^{(2)}(x)$ by $\ell$,
the associated 
Lerer--Tismenetsky
B\'{e}zoutian function $\mathcal{B}_{M^{(2)},M^{(1)}}$ is 
   defined by~\cite{Anderson_76_01,Lerer_82_01} 
   \begin{equation}  \label{eq:bezoutmat}
      \mathcal{B}_{M^{(2)},M^{(1)}}(P^{(1)},P^{(2)})
=\!\!\sum_{i,j=1}^{\ell,k}\!\!
      B_{ij}\phi_{\ell-i}(y)\phi_{k-j}(x)\!:=\frac{M^{(2)}(y)P^{(2)}(x)-M^{(1)}(y)P^{(1)}(x)}{x-y}.
   \end{equation}
The $n\ell\times nk$ 
   B\'{e}zout block matrix is defined by $B_{M^{(2)},M^{(1)}}(P^{(1)},P^{(2)}) \!= \!\left(B_{ij}\right)_{1\leq i\leq \ell,1\leq j\leq k}$.
  \end{definition}

The Lerer--Tismenetsky B\'{e}zoutian function and the corresponding
B\'{e}zout block matrix are not unique as there are many possible choices of $M^{(1)}$ and $M^{(2)}$. Indeed, the matrix $B$ does not even need to be square. In the examples below we use monomials $\phi_i(x)=x^i$. 

\begin{example}
As in the first example in Example~\ref{ex:naive}, 
Let $P^{(1)}=\begin{bmatrix}
 x & 0\\
 0 & x-1
 \end{bmatrix}$ and $P^{(2)}=\begin{bmatrix}
 x-6 & -1\\
 12 & x+1
 \end{bmatrix}$ and select\footnote{$M^{(1)}$ and $M^{(2)}$ are of minimal degree as there are no $M^{(1)}$ and $M^{(2)}$ of degree $0$ or $1$ such that $M^{(1)}P^{(1)}=M^{(2)}P^{(2)}$ exists.} $M^{(1)}=\begin{bmatrix}
  x^2-3x+6 & x\\
  14x-12 & x^2+2x
  \end{bmatrix}$ and $M^{(2)}=\begin{bmatrix}
  x^2+3x & 2x\\
  2x & x^2
  \end{bmatrix}$. It can be verified that $M^{(1)}P^{(1)} = M^{(2)}P^{(2)}$. The associated 
 B\'{e}zout matrix is \[\begin{bmatrix}
  6 & 1\\
  -12 & -2\\
  -6 & 0\\
  -12 & 0
  \end{bmatrix}\] and has a trivial kernel.

Now consider again the second example in Example~\ref{ex:naive}, $P^{(1)}=\begin{bmatrix}
    x & 1\\
    0 & x
    \end{bmatrix}$, $P^{(2)}=\begin{bmatrix}
    0 & x\\
    x & 1
    \end{bmatrix}$, 
and select $M^{(1)} = P^{(1)}$ and $M^{(2)} = P^{(1)}F$, where $F = \begin{bmatrix}
    0 & 1\\
    1 & 0
    \end{bmatrix}$.  The 
B\'{e}zout matrix is
    $\begin{bmatrix}
    0 & 0\\
    0 & 0
   \end{bmatrix}$. Coherently with~\eqref{eq:kerbezout} below, its kernel has dimension $2$ because $P^{(1)}$ and $P^{(2)}$  only share the zero eigenvalue and the associated Jordan chain $\begin{bmatrix}
    1 & 0
    \end{bmatrix}^T,\begin{bmatrix}
        0 & -1
        \end{bmatrix}^T$.
\end{example}

When $P^{(1)}(x)$ and $P^{(2)}(x)$ commute, i.e., 
$P^{(2)}(x)P^{(1)}(x)=P^{(1)}(x)P^{(2)}(x)$, the natural choice of $M^{(1)}$ and $M^{(2)}$ is $M^{(1)}=P^{(2)}$ and $M^{(2)}=P^{(1)}$, and we write 
$\mathcal{B}(P^{(1)},P^{(2)}): = \mathcal{B}_{P^{(1)},P^{(2)}}(P^{(1)},P^{(2)})$.
 In this case the 
 B\'{e}zout matrix $B(P^{(1)},P^{(2)})$ (also dropping subscripts)
is square and 
of size $nk\times nk$.
Here are some important properties of the 
Lerer--Tismenetsky  B\'{e}zoutian function and the 
 B\'{e}zout matrix:

\begin{enumerate}
\item The Lerer--Tismenetsky B\'{e}zoutian function is skew-symmetric with respect to its arguments: $\mathcal{B}_{M^{(2)},M^{(1)}}(P^{(1)},P^{(2)})=-\mathcal{B}_{M^{(1)},M^{(2)}}(P^{(2)},P^{(1)})$,
\item 
$\mathcal{B}(P^{(1)},P^{(2)})$ 
 is bilinear with respect to its polynomial arguments. That is, 
$\mathcal{B}(a P^{(1)}+b P^{(2)},P^{(3)}) = a  \mathcal{B}(P^{(1)},P^{(3)})+b  \mathcal{B}(P^{(2)},P^{(3)})$
if $P^{(1)},P^{(2)}$ both commute with $P^{(3)}$,
\item  The kernel of the B\'{e}zout block matrix  is
\begin{equation}  \label{eq:kerbezout}
   \ker B_{M^{(2)},M^{(1)}}(P^{(1)},P^{(2)})
 =\mbox{Im}\ \!\!
   \begin{bmatrix}
     X_F\phi_{k-1}(T_F)\\
  \vdots\\
     X_F\phi_{0}(T_F)
   \end{bmatrix}
 \,\oplus \,\mbox{Im}\ \!\!
   \begin{bmatrix}
    X_\infty  \phi_{0}(T_\infty)\\
  \vdots\\
    X_\infty  \phi_{k-1}(T_\infty)
   \end{bmatrix}. 
\end{equation}
Here $(X_F,T_F)$, $(X_\infty,T_\infty)$ are the greatest common
   restrictions~\cite[Ch.~9]{Gohberg_09_01}
 of the finite and infinite Jordan pairs~\cite[Ch.~1, Ch.~7]{Gohberg_09_01} of $P^{(1)}(x)$ and $P^{(2)}(x)$. 
  The infinite Jordan pairs are defined regarding both polynomials as grade $k$.

Importantly, $   \ker  B_{M^{(2)},M^{(1)}}(P^{(1)},P^{(2)})$ in~\eqref{eq:kerbezout} does not depend on the choice of $M^{(1)}$ and $M^{(2)}$. This was proved (in the monomial basis) in~\cite[Thm.~1.1]{Lerer_82_01}.
Equation~\eqref{eq:kerbezout} holds for any polynomial basis: it can be obtained from 
that theorem via a congruence transformation involving the mapping 
$S^{-1} \otimes I_n$
 in~\eqref{eq:mapc}.
 
\item If for any $x$ and $y$ we have $P^{(1)}(y)P^{(2)}(x)=P^{(2)}(x)P^{(1)}(y)$, then $B(P^{(1)},P^{(2)})$ is a block symmetric matrix. Note that the hypothesis is stronger than $P^{(1)}(x)P^{(2)}(x)=P^{(2)}(x)P^{(1)}(x)$, but it is always satisfied when $P^{(2)}(x)=v(x) I$.
\end{enumerate}
   
   The following lemma shows that, as in the scalar case,  property 3 is the eigenvalue exclusion theorem in disguise.
   
   \begin{lemma}\label{lem:gcr}
      The greatest common restriction of the (finite and infinite) Jordan pairs of the regular matrix polynomials $P^{(1)}$ and $P^{(2)}$ is nonempty if and only if 
      $P^{(1)}$ and $P^{(2)}$ share both an eigenvalue and the corresponding
      eigenvector.
   \end{lemma}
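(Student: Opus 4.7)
The plan is to unfold the definitions of a Jordan pair and of the greatest common restriction as given in~\cite[Ch.~1, Ch.~7, Ch.~9]{Gohberg_09_01}, and then verify the two implications by elementary linear algebra, treating the finite and infinite parts in parallel.

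First I would recall that a finite Jordan pair of a regular matrix polynomial $P(x)$ is a pair $(X,T)$, with $X\in\mathbb{F}^{n\times N}$ and $T\in\mathbb{F}^{N\times N}$, whose columns form Jordan chains of $P$ associated with the eigenvalues of $T$; the infinite Jordan pair is defined analogously using the reversal polynomial $x^k P(1/x)$. A key standard fact I will invoke is that $\operatorname{col}(X,T):=[X^T,\,(XT)^T,\ldots,(XT^{k-1})^T]^T$ has full column rank. A common restriction of two Jordan pairs is a pair $(X,T)$ whose columns simultaneously form Jordan chains of both polynomials, and the greatest common restriction (GCR) is the maximal such pair with respect to the containment of the associated invariant subspaces.

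For the direction ($\Leftarrow$), if $P^{(1)}$ and $P^{(2)}$ share an eigenvalue $\lambda_0\in\overline{\mathbb{F}}$ together with an eigenvector $v\neq 0$, then $(v,\lambda_0)\in\mathbb{F}^{n\times 1}\times\mathbb{F}^{1\times 1}$ is a one-column Jordan pair that is a restriction of the Jordan pair of each $P^{(i)}$; hence the GCR must contain it and is nonempty. The analogous argument works when the shared eigenvalue is infinite, by passing to the reversals. For the direction ($\Rightarrow$), if the GCR is nonempty, I take a nontrivial common restriction $(X,T)$ and pick any eigenvalue $\lambda_0$ of $T$ with eigenvector $w\neq 0$. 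Then $v:=Xw$ is nonzero, because the full column rank of $\operatorname{col}(X,T)$ together with $XT^j w=\lambda_0^j Xw$ forces at least one (and hence all) of the vectors $XT^j w$ to be nonzero. Since $(X,T)$ is a Jordan pair for both polynomials, we obtain $P^{(i)}(\lambda_0)v=0$ for $i=1,2$, giving the common eigenpair. If the GCR's nonemptiness arises from the infinite part, the same argument applied to the reversals produces a shared infinite eigenpair.

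The main obstacle I expect is a clean bookkeeping of the finite versus infinite components: one must make sure that an ``eigenpair at infinity'' is defined consistently with the grade-$k$ convention used in~\eqref{eq:kerbezout}, and that the full column rank property of $\operatorname{col}(X,T)$ is preserved under passing to common restrictions. Once these two technical points are in hand, the equivalence reduces to the short argument above.
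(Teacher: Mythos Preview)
Your proposal is correct and follows essentially the same approach as the paper. Both directions match: a shared eigenpair yields a one-dimensional common restriction, and conversely one extracts an eigenvector of the restricted matrix $T$ to produce the shared eigenpair, with the infinite case handled by passing to reversals/decomposable pairs. The only cosmetic difference is that the paper puts the restricted $J$ in Jordan form and takes the first canonical vector $e_1$, citing \cite[eq.~1.64]{Gohberg_09_01}, whereas you pick an arbitrary eigenvector $w$ of $T$ and argue $Xw\neq 0$ via the full column rank of $\operatorname{col}(X,T)$; your worry about this rank property surviving restriction is easily dispatched, since $\operatorname{col}(X,T)=\operatorname{col}(X_i,J_i)S$ with both factors of full column rank.
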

   
   \begin{proof}
   Suppose that the two matrix polynomials have only finite eigenvalues. We denote by $(X_1,J_1)$ (resp., $(X_2,J_2)$) a Jordan pair of $P^{(1)}$ (resp., $P^{(2)}$).  Observe that a greatest common restriction is nonempty if and only if there exists at least one nonempty common restriction. 
First, assume there exist $v$ and $x_0$ such that $P^{(1)}(x_0)v=P^{(2)}(x_0)v=0$. 
Up to a similarity on the two Jordan pairs we have $X_1 S_1 e_1=X_2 S_2 e_1=v$, $J_1 S_1 e_1=S_1 e_1 x_0$, and $J_2 S_2 e_1=S_2 e_1 x_0$, where $S_1$ and $S_2$ are two similarity matrices. There is no loss of generality in assuming that, if necessary, one first applies the similarity transformation, see~\cite[p.~204]{Gohberg_09_01}.
This shows that $(v,x_0)$
 is a common restriction~\cite[p.~204, p.~235]{Gohberg_09_01} of the Jordan pairs of $P^{(1)}$ and $P^{(2)}$. 
Conversely, let $(X,J)$ be a common restriction with $J$ in Jordan form. We have the four equations $X_1 S_1 = X$,  $X= X_2 S_2$, $J_1 S_1 = S_1 J$, and $J_2 S_2 = S_2 J$ for some full column rank matrices $S_1$ and $S_2$. Letting $v:=X e_1, x_0:=e_1^T J e_1$, it is easy to check that $(v,x_0)$ is also a common restriction, and that $X_1 S_1 e_1= v = X_2 S_2 e_1$, $J_1 S_1 e_1 = S_1 e_1 x_0$, and $J_2 S_2 e_1 = S_2 e_1 x_0$. From~\cite[eq.~1.64]{Gohberg_09_01}\footnote{Although strictly speaking~\cite[eq.~1.64]{Gohberg_09_01} is for a monic matrix polynomial, it is extendable in a straightforward way to a regular matrix polynomial (see also~\cite[Ch.~7]{Gohberg_09_01}).}, it follows that $P^{(1)}(x_0)v=P^{(2)}(x_0)v=0$.

The assumption that all the eigenvalues are finite can be easily removed (although complicating the notation appears unavoidable). 
In the argument above replace every Jordan pair $(X,J)$ with a \emph{decomposable pair}~\cite[pp.~188--191]{Gohberg_09_01} of the form $\left[ X_F, X_\infty \right]$ and $ J_F \oplus J_\infty$, where $(X_F,J_F)$ is a finite Jordan pair and $(X_\infty, J_\infty)$ is an infinite Jordan pair~\cite[Ch.~7]{Gohberg_09_01}. As the argument is essentially the same we omit the details.
   \end{proof}
   
   The importance of the connection with B\'{e}zout theory is now clear.
   The proof of the eigenvalue exclusion theorem in the matrix polynomial case becomes immediate. Before giving the proof for Theorem~\ref{thm:eigexclusion} for $n>1$, we state the analogue of~\eqref{eq:Lbez} for matrix polynomials. Here and below, we use the notation $\mathbb{DL}(P,v)$  to denote the unique pencil in $\mathbb{DL}(P)$ with ansatz $v$.
   \begin{lemma}\label{lem:Lbzefunc}
$\mathbb{DL}(P,v)$ for a matrix polynomial $P(\lambda)$ with ansatz $v$ 
is a matrix pencil 
 that 
can be written as 
\begin{equation}  \label{eq:Lbezfunc}
\mathbb{DL}(P,v)
=  B(P,(x-\lambda)vI) 
=\lambda  B(vI,P)+B(P,xvI), 
\end{equation}
where $B$ is the B\'{e}zout matrix as in~\eqref{eq:bezmat}. 
   \end{lemma}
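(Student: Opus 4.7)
The plan is to transport the problem from block matrices to bivariate matrix polynomials via the bijection $\phi$ of~\eqref{eq:phidef}, identify the two bivariate polynomials associated with $\mathbb{DL}(P,v)=\lambda X+Y$ as Lerer--Tismenetsky B\'{e}zoutian functions, and then apply the skew-symmetry and bilinearity of $\mathcal{B}$ to assemble the claimed identities.

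First, I would let $F(x,y)$ and $G(x,y)$ be the bivariate matrix polynomials corresponding to $X$ and $Y$ under $\phi$, and use that they are uniquely determined by $v$ through~\eqref{eq:F} and~\eqref{eq:G}. Because $v(x)I$ commutes with $P(x)$, both right-hand sides vanish on the diagonal $y=x$, so $(y-x)$ divides them and one can solve explicitly to get closed-form expressions of the type
\begin{equation*}
F(x,y)=\frac{v(y)P(x)-P(y)v(x)}{x-y}, \qquad G(x,y)=\frac{xv(x)P(y)-yv(y)P(x)}{x-y}.
\end{equation*}

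Next, comparing with Definition~\ref{def:bezmatpoly}, the commutativity of $vI$ with $P$ (and of $xvI$ with $P$) lets us apply the commuting form of the B\'{e}zoutian, yielding $F(x,y)=\mathcal{B}(vI,P)$ and $G(x,y)=\mathcal{B}(P,xvI)$, with $vI$ and $xvI$ regarded as matrix polynomials of grade $k$ (possibly with vanishing leading coefficients). Since $\phi$ is a bijection, at the level of coefficients this reads $X=B(vI,P)$ and $Y=B(P,xvI)$, which establishes the second equality of~\eqref{eq:Lbezfunc}.

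Finally, the first equality follows by manipulating the B\'{e}zout matrices as in~\eqref{eq:Lbez}: skew-symmetry (property~1) gives $\lambda B(vI,P)=-\lambda B(P,vI)=B(P,-\lambda vI)$, and then bilinearity (property~2), which applies because $-\lambda vI$ and $xvI$ both commute with $P$, combines the two summands into $B(P,(x-\lambda)vI)$. The main obstacle I expect is bookkeeping on grades: $v$ has degree at most $k-1$, so one must consistently regard $vI$, $xvI$, and $(x-\lambda)vI$ as grade-$k$ matrix polynomials (possibly with zero leading coefficient) in order for the resulting B\'{e}zout matrices to have the correct $nk\times nk$ size matching the pencil; once this convention is fixed, the commutativity hypotheses required by properties~1 and~2 are automatic since the second argument is always a scalar multiple of~$I$.
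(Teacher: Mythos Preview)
Your proposal is correct and follows essentially the same route as the paper: solve~\eqref{eq:F}--\eqref{eq:G} for the closed forms of $F$ and $G$, recognize them as Lerer--Tismenetsky B\'{e}zoutian functions (with the commuting choice $M^{(1)}=P^{(2)},\ M^{(2)}=P^{(1)}$), and read off the pencil. The only cosmetic difference is ordering: the paper directly verifies $\mathcal{B}(P,(x-\lambda)vI)=\lambda F(x,y)+G(x,y)$ in one computation, whereas you first identify $F=\mathcal{B}(vI,P)$ and $G=\mathcal{B}(P,xvI)$ separately and then combine via skew-symmetry and bilinearity.
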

   \begin{proof}
As in~\eqref{eq:fgscalar} for the scalar case, 
  we solve~\eqref{eq:F} and~\eqref{eq:G} for $F(x,y)$ and $G(x,y)$ to obtain 
\[
  F(x,y) = \frac{v(y)P(x)-P(y)v(x)}{x-y}, \qquad G(x,y) =
  \frac{P(y)v(x)x-yv(y)P(x)}{x-y}    .
\]
Let $P^{(1)}=P(x)$ and $P^{(2)}=(x-\lambda)v(x) I_n$
in~\eqref{eq:bezoutmat}. Then, $P^{(1)}$ and $P^{(2)}$ commute
for all $x$, so we take $M^{(1)}=P^{(2)}$ and $M^{(2)}=P^{(1)}$ and obtain
 \begin{align*}
   \mathcal{B}(P(x),(x-\lambda)v(x)I_n)=
&\frac{P(y)(x-\lambda)v(x) -(y-\lambda)v(y)P(x)}{x-y}\\
 =& \lambda F(x,y)+G(x,y)\\
 =& \sum_{i,j=1}^k   B_{ij}\phi_{k-i}(y)\phi_{k-j}(x). 
 \end{align*}
This gives the $nk\times nk$ B\'{e}zout block matrix $B(P,(x-\lambda)vI)=(B_{ij})_{1\leq i,j\leq k}$.     
   \end{proof}

We are now ready to prove Theorem~\ref{thm:eigexclusion}. 
Recall that the claim is that $L(\lam)\in \mathbb{DL}(P) $ with ansatz $v(\lam)$ is a linearization for $P(\lam)$ if and only if $v(\lam) I_n$ and
  $P(\lam)$ have no shared eigenvalue.

\begin{proof}[Proof of Theorem~\ref{thm:eigexclusion} for $n>1$]
  If $v I_n$ and $P$ share a finite eigenvalue $\lam_0$ and 
$P(\lam_0)w=0$ for a nonzero $w$, then $(\lam_0-\lambda)v(\lam_0)w=0$
 for all
  $\lambda$. Hence, 
by~\eqref{eq:kerbezout} and Lemmas~\ref{lem:gcr} and~\ref{lem:Lbzefunc}, 
$L(\lambda)=B(P,(x-\lambda)vI)$ is singular for all $\lambda$. An analogous argument holds for a shared infinite eigenvalue. 
  Conversely, suppose $v(\lambda)I_n$ and $P(\lambda)$ have no common eigenvalues. 
 If $\lambda_0$ is an eigenvalue of $P$ then $(\lambda_0-\lambda)v(\lambda_0)I$ is nonsingular unless $\lambda = \lambda_0$. Thus, 
again using~\eqref{eq:kerbezout} and Lemma~\ref{lem:gcr}, 
if $\lam$ is not an eigenvalue for $P$ then 
the common restriction is empty, which means $L(\lambda)$ is nonsingular. In other words, $L(\lambda)$ is regular and a linearization by Theorem~\ref{thm:linearization}. 
\end{proof}

\section{Barnett's theorem and ``beyond $\mathbb{DL}$'' linearization space}\label{sec:beyondDL}
Thus far, we have introduced a new viewpoint for the $\mathbb{DL}$ linearization space and demonstrated that the viewpoint provides deeper understanding of the $\mathbb{DL}$ space, and often helps simplify proofs of known results. 
We now turn to new aspects and results that can be obtained from this viewpoint, namely, we define and study a new vector space of potential linearizations for matrix polynomials that includes $\mathbb{DL}$ as a subspace and extends many of its properties.
 
In this section we work for simplicity in the monomial basis, and we assume that the matrix polynomial $P(x) = \sum_{i=0}^k P_i x^i$ has an invertible leading coefficient $P_k$. Given a ring $R$, a \emph{left ideal} $L$ is a subset of $R$ such that $(L,+)$ is a subgroup of $(R,+)$ and $r \ell \in L$ for any $\ell \in L$ and $r \in R$~\cite[Ch.~1]{hazewinkel2004algebras}. A \emph{right ideal} is defined analogously.

Given a matrix polynomial $P(x)$ over some field $\mathbb{F}$ the set $L_P=\{ Q(x) \in \mathbb{F}^{n \times n}[x] \ | \ Q(x)=A(x)P(x), \ A(x)\in \mathbb{F}^{n \times n}[x]\}$ is a left ideal of the ring $\mathbb{F}^{n \times n}[x]$. Similarly, $R_P=\{ Q(x) \in \mathbb{F}^{n \times n}[x] \ | \ Q(x)=P(x)A(x), \ A(x)\in \mathbb{F}^{n \times n}[x] \}$ is a right ideal of $\mathbb{F}^{n \times n}[x]$.

A matrix polynomial of grade $k-1$ can be represented as $G(x)=\Gamma \Phi(x)$, where $\Gamma = [\Gamma_{k-1},\Gamma_{k-2},\ldots,\Gamma_0]\in \mathbb{F}^{n \times nk}$ are its coefficient matrices when expressed in the monomial basis and $\Phi(x)=\begin{bmatrix}
x^{k-1} I,\ldots,x I,I
\end{bmatrix}^\mathcal{B}$. 
Let $C_P^{(1)}$ be the first companion matrix\footnote{Some authors define the first companion matrix 
with minor differences in the choice of signs. Here, we make our choice for simplicity of what follows. 
For other polynomial bases the matrix should be replaced accordingly~\cite{Barnett_book}.
} of $P(x)$:
\begin{equation}
  \label{eq:comp1}
 C_P^{(1)} = \begin{bmatrix} 
-P_k^{-1} P_{k-1} & 
-P_{k}^{-1}P_{k-2} & \cdots & -P_{k}^{-1}P_1 & -P_{k}^{-1}P_0\\
I & & & & \\
& I & & & \\
& & \ddots & &\\
& & & I & 0
\end{bmatrix}.  
\end{equation}
A key observation is that
 the action of $C_P^{(1)}$ on $\Phi$ is that of the multiplication-by-$x I$ operator in the quotient module $\mathbb{F}^{n \times n}[x]/L_P$: 
\[ \begin{bmatrix}
x^{k-1} I\\
x^{k-2} I\\
\vdots\\
x I\\
I
\end{bmatrix} xI \equiv C_P^{(1)} \begin{bmatrix}
x^{k-1} I\\
x^{k-2} I\\
\vdots\\
x I\\
I
\end{bmatrix} = 
 \begin{bmatrix}
x^{k-1} I\\
x^{k-2} I\\
\vdots\\
x I\\
I
\end{bmatrix} xI + \begin{bmatrix}
- P_k^{-1}P(x)\\
0\\
\vdots\\
0\\
0
\end{bmatrix}, \ \ \ \ \ \begin{bmatrix}
- P_k^{-1}P(x)\\
0\\
\vdots\\
0\\
0
\end{bmatrix} \in L_P^{k}.\]

Multiplying by the coefficients $\Gamma$, we can identify the map $\Gamma \mapsto \Gamma C_P^{(1)}$ with the map $G(x) \mapsto G(x)x$ in $\mathbb{F}^{n \times n}[x]/L_P$. That is, we can write $\Gamma C_P^{(1)} \Phi = x G(x) + Q(x)$ for some $Q(x) \in L_P$. More precisely, we have $Q(x)=-\Gamma_{k-1}P_k^{-1}P(x)$. Applying the previous observation to each block row of a block matrix, we see that, if we map a block matrix $X \in  \mathbb{F}^{kn \times kn}$ to the corresponding bivariate matrix polynomial $\phi(X) \in  \mathbb{F}^{n \times n}[x,y]$ (recall the definition of $\phi$ in~\eqref{eq:phidef}), we can identify the map $X \mapsto X C_P^{(1)}$ with the map $\phi(X) \mapsto \phi(X) x$ in the quotient space $\mathbb{F}[y]^{n \times n}[x]/L_{P(x)} = \mathbb{F}^{n \times n}[x,y]/L_{P(x)}$. 

The next theorem shows that, when working with the quotient space modulo $L_P$ or $R_P$, one can find a unique matrix polynomial of low grade in each equivalence class.

\begin{theorem}\label{quotient}
Let $P(x)=\sum_{i=0}^k P_i x^i \in \mathbb{F}^{n \times n}[x]$ be a matrix polynomial of degree $k$ such that $P_k$ is invertible, and let $V(x) \in \mathbb{F}^{n \times n}[x]$ be any matrix polynomial. Then, there exists a unique $Q(x)$ of grade $k-1$ such that $Q(x) \equiv V(x)$ in the quotient module $\mathbb{F}^{n \times n}[x]/L_P$, i.e., there exists a unique $A(x) \in \mathbb{F}^{n \times n}[x]$ such that $V(x)= A(x) P(x) + Q(x)$ with $Q(x)$ of grade $k-1$.

Moreover, there exists a unique $S(x)$ of grade $k-1$ such that $S(x) \equiv V(x)$ in the quotient module $\mathbb{F}^{n \times n}[x]/R_P$, i.e., there exists a unique $B(x) \in \mathbb{F}^{n \times n}[x]$ such that $V(x) = P(x) B(x) + S(x)$ with $S(x)$ of grade $k-1$.
\end{theorem}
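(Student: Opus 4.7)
My plan is to prove this theorem by mimicking the classical Euclidean division algorithm for univariate polynomials, with the key observation that the invertibility of the leading coefficient $P_k$ is precisely what allows us to perform the ``leading term cancellation'' step. I will handle existence by induction on $\deg V$, and uniqueness by a degree argument.

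For the left quotient statement (existence of $Q$ and $A$ with $V(x) = A(x)P(x) + Q(x)$), I would argue by strong induction on $m = \deg V$. The base case $m \leq k-1$ is trivial: take $A(x) = 0$ and $Q(x) = V(x)$. For the inductive step, writing $V(x) = V_m x^m + (\text{lower order})$ with $m \geq k$, I define
\[
\tilde V(x) = V(x) - A(x) P(x), \qquad A(x) = V_m P_k^{-1} x^{m-k}.
\]
A direct computation shows the coefficient of $x^m$ in $A(x) P(x)$ is $V_m P_k^{-1} P_k = V_m$, so $\deg \tilde V < m$. The inductive hypothesis applies to $\tilde V$ and yields the decomposition for $V$. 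For the right quotient statement, the same argument works with $B(x) = x^{m-k} P_k^{-1} V_m$, using that the leading coefficient of $P(x) B(x)$ is $P_k P_k^{-1} V_m = V_m$; this is the only place where the argument meaningfully distinguishes left from right, since matrix multiplication is noncommutative.

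For uniqueness, suppose $V(x) = A_1(x) P(x) + Q_1(x) = A_2(x) P(x) + Q_2(x)$ with $\deg Q_i \leq k-1$. Rearranging yields $(A_1(x) - A_2(x)) P(x) = Q_2(x) - Q_1(x)$. If $A_1 \neq A_2$, then letting $d = \deg(A_1 - A_2) \geq 0$ and writing the leading coefficient as $C_d \neq 0$, the leading coefficient of $(A_1 - A_2) P$ is $C_d P_k$, which is nonzero because $P_k$ is invertible. Hence $\deg((A_1 - A_2) P) = d + k \geq k$, contradicting $\deg(Q_2 - Q_1) \leq k-1$. Thus $A_1 = A_2$ and consequently $Q_1 = Q_2$. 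The right-quotient uniqueness is analogous, using that $P_k C_d \neq 0$ whenever $C_d \neq 0$.

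There is no real obstacle here; the only subtle point is observing that invertibility of $P_k$ is used twice (for existence, to build the cancelling term; for uniqueness, to rule out degree drop), and that the left/right dichotomy forces us to place the inverse $P_k^{-1}$ on the correct side of the leading coefficient $V_m$. The theorem is a matrix analogue of the standard polynomial division algorithm, and the proof is essentially the same modulo keeping careful track of the side of multiplication.
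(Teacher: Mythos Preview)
Your proof is correct. The paper takes an essentially equivalent but differently organized route: rather than arguing by induction, it writes down in one shot the block upper-triangular Toeplitz linear system that the coefficients $A_0,\dots,A_{\deg V-k}$ must satisfy in order that $A(x)P(x)$ match $V(x)$ in degrees $k$ through $\deg V$, and observes that this system is invertible because $P_k$ is. Existence and uniqueness of $A$ (hence of $Q$) then follow simultaneously. Your inductive argument is exactly back-substitution on that system, so the two proofs are the same computation in different clothing. The only practical difference is that the paper's explicit block-matrix formulation is reused later (in the proofs of the bivariate quotient theorem and of $P(x)Q(x)=S(x)P(x)$), so if you are writing this up in context you may want to record that linear system explicitly rather than hide it inside the induction.
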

\begin{proof}
If $\deg V(x) < k$, then take $Q(x)=V(x)$ and $A(x)=0$. If $\deg V \geq k$, then our task
 is to find $A(x) = \sum_{i=0}^{\deg V-k} A_ix^i$ with $A_{\deg V- k} \neq 0$ such that, for $M(x)=A(x)P(x)=\sum_{i=0}^{{\deg V}} M_i x^i$, we have $M_i = \sum_{j+\ell=i} (A_j P_{\ell})= V_i$ for $k \leq i \leq \deg V$. 
This is equivalent to solving the following block matrix equation:
\begin{equation}\label{eq:whatisA} 
\begin{bmatrix} A_{\deg A} & \cdots  & A_0 \end{bmatrix} \begin{bmatrix} P_k & P_{k-1} & P_{k-2} & \cdots  & \\
 & P_k & P_{k-1} & P_{k-2}  & \cdots  \\ 
& & \ddots & \ddots  & \\
& &  & P_k & P_{k-1}\\
& & & & P_k
\end{bmatrix} = \begin{bmatrix} V_{\deg V} & \cdots & V_k \end{bmatrix},
\end{equation}
which shows explicitly that $A(x)$ exists and is unique. This also implies that $Q(x)=V(x) - A(x) P(x)$ exists and is unique. 
An analogous argument proves the existence and uniqueness of $B(x)$ and $S(x)$ such that $V(x) = P(x) B(x) + S(x)$.
\end{proof}

Thanks to the connection between $\mathbb{DL}$ and the B\'{e}zoutian, we find that~\cite[Theorem 4.1]{Higham_06_01} is a generalization  of Barnett's theorem to the matrix case. The proof that we 
give below 
is a generalization of that 
found 
 in~\cite{Heinig_1984} for the scalar case. 
It is 
 another 
 example 
  where the algebraic
interpretation and the connection with B\'{e}zoutians simplify proofs (compare with the argument in~\cite{Higham_06_01}).

\begin{theorem}[Barnett's theorem for matrix polynomials]\label{thm:Barnett}
Let $P(x)$ be a matrix polynomial of degree $k$ with nonsingular leading coefficient
and $v(x)$ a scalar polynomial of grade $k-1$. We have $\mathbb{DL}(P,v) = \mathbb{DL}(P,1) v(C_P^{(1)})$, where $C_P^{(1)}$ is the first companion matrix of $P(x)$.
\end{theorem}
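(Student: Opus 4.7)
The plan is to push the identity $\mathbb{DL}(P,v) = \mathbb{DL}(P,1) v(C_P^{(1)})$ through the bivariate-polynomial map $\phi$ of~\eqref{eq:phidef}. By Lemma~\ref{lem:Lbzefunc}, $\mathbb{DL}(P,v)$ corresponds to the bivariate matrix polynomial
\[
H_v(x,y,\lambda):=\mathcal{B}(P,(x-\lambda)vI)=\frac{P(y)(x-\lambda)v(x)-(y-\lambda)v(y)P(x)}{x-y},
\]
and in particular $\mathbb{DL}(P,1)$ corresponds to $H_1$. On the other side, the observation made just before the theorem states that the action $X\mapsto XC_P^{(1)}$ coincides, under $\phi$, with the action $\phi(X)\mapsto\phi(X)\,x$ in the quotient ring $\mathbb{F}[y]^{n\times n}[x]/L_P$. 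By linearity and iteration, right-multiplication by $v(C_P^{(1)})$ corresponds to multiplication by the scalar polynomial $v(x)$ modulo $L_P$, and Theorem~\ref{quotient} ensures that the grade-$(k-1)$ representative in $x$ of each class in this quotient is unique.

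The proof therefore reduces to verifying that $H_1(x,y,\lambda)\,v(x)\equiv H_v(x,y,\lambda)\pmod{L_P}$, since both are then grade-$(k-1)$ representatives of the same class (both lie in the image of $\phi$ on $nk\times nk$ matrices) and must coincide by uniqueness. I compute directly:
\[
H_1(x,y,\lambda)\,v(x)-H_v(x,y,\lambda)=\frac{(y-\lambda)\bigl[v(y)P(x)-P(x)v(x)\bigr]}{x-y}.
\]
Because $v$ is a scalar polynomial, $v(y)P(x)=P(x)v(y)$, and so the numerator equals $(y-\lambda)P(x)\bigl[v(y)-v(x)\bigr]$. The factor $\frac{v(x)-v(y)}{x-y}$ is a genuine polynomial, and being scalar it commutes with $P(x)$, so the difference can be written as $-(y-\lambda)\,\tfrac{v(x)-v(y)}{x-y}\cdot P(x)$, exhibiting it as an element of $L_P$.

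Combining everything, $\phi(\mathbb{DL}(P,1)\,v(C_P^{(1)}))\equiv H_1(x,y,\lambda)\,v(x)\equiv H_v(x,y,\lambda)=\phi(\mathbb{DL}(P,v))\pmod{L_P}$, so by uniqueness these bivariate polynomials are equal, and bijectivity of $\phi$ transfers the identity back to the block-matrix pencils (separating coefficients of $\lambda$, if one prefers to avoid carrying $\lambda$ through the quotient). The main obstacle I anticipate is the clean cancellation of $(x-y)$ in the subtraction and the identification of the resulting remainder with a member of the \emph{left} ideal $L_P$; this is exactly where the scalarity of $v$ is crucial, as it permits the scalar factor $-(y-\lambda)\tfrac{v(x)-v(y)}{x-y}$ to be moved from the right to the left of $P(x)$.
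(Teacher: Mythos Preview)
Your proof is correct and follows essentially the same approach as the paper: both use the bivariate-polynomial correspondence, the interpretation of right-multiplication by $C_P^{(1)}$ as multiplication by $x$ modulo $L_{P}$, and the uniqueness of the grade-$(k-1)$ representative from Theorem~\ref{quotient}. The only difference is that the paper verifies the congruence by induction on monomial ansatze $v=x^j$ via the recurrence $\mathcal{B}(P,(x-\lambda)x^jI)\equiv \mathcal{B}(P,(x-\lambda)x^{j-1}I)\,x$, whereas you compute the difference $H_1\,v(x)-H_v$ in one shot for a general $v$; your route is marginally more direct.
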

\begin{proof}
It is easy to verify that the following recurrence formula holds:
\[ 
\begin{aligned} 
\frac{P(y) x^j (x-\lambda) - y^j (y-\lambda) P(x)}{x-y} &= \frac{P(y) x^{j-1}(x-\lambda) -y^{j-1}(y-\lambda)P(x)}{x-y} x \\
&\qquad\qquad\qquad\qquad\qquad\qquad+ y^{j-1} (y-\lambda) P(x).  
\end{aligned} 
\]
Hence, we have $\mathcal{B}(P,(x-\lambda) x^j I) \equiv \mathcal{B}(P,(x-\lambda) x^{j-1} I) x$ where the equivalence is in the quotient space $\mathbb{F}[y,\lambda]^{n \times n}[x]/L_{P(x)}$. On the other hand, as we argued above, the operator of multiplication-by-$x$ in the quotient space is represented, in the monomial basis, by right-multiplication times the matrix $C_P^{(1)}$, while, again in the monomial basis, the B\'{e}zoutian $\mathcal{B}(P,(x-\lambda) x^j I)$ (resp.~$\mathcal{B}(P,(x-\lambda) x^{j-1} I)$) is represented by the pencil $\mathbb{DL}(P,x^j)$ (resp.~$\mathbb{DL}(P,x^{j-1})$). This observation suffices to prove by induction the theorem when $v(C_P^{(1)})$ is a monomial of the form $(C_P^{(1)})^j$ for $0 \leq j \leq k-1$. The case of a generic $v(C_P^{(1)})$ follows by linearity of the B\'{e}zoutian.
\end{proof}

An analogous interpretation as a multiplication operator holds for the second companion matrix:
\begin{equation}
  \label{eq:comp2}
 C_P^{(2)} = \begin{bmatrix} 
- P_{k-1} P_k^{-1}& I & & & \\
-P_{k-2}P_{k}^{-1} & & I & & \\
\vdots &  & &\ddots  & \\
 -P_1P_{k}^{-1} & & & & I\\
-P_0P_{k}^{-1} & & &  & 
\end{bmatrix}.  
\end{equation}
Indeed, $C_P^{(2)}$ represents multiplication by $y$ modulo $R_P$, the right ideal generated by $P(y)$, i.e., if $X \in \mathbb{F}^{kn \times kn}$ is a block matrix we can identify the map $X \mapsto C_P^{(2)} X$ with the map $\phi(X) \mapsto y\phi(X)$ in  $\mathbb{F}[x]^{n \times n}[y]/R_{P(y)} = \mathbb{F}^{n \times n}[x,y]/R_{P(y)}$. 
A dual version of Barnett's theorem holds for the second companion matrix. Indeed, one has $\mathbb{DL}(P,v(x)) = v(C_P^{(2)}) \mathbb{DL}(P,1)$. The proof is analogous to the one for Theorem~\ref{thm:Barnett} and is omitted.

As soon as we interpret the two companion matrices in this way, we are implicitly defining a map $\psi$ from block matrices to bivariate polynomials modulo $L_{P(x)}$ and $R_{P(y)}$. More formally, let $S(x,y) \in \mathbb{F}^{n \times n}[x,y]$, and consider the equivalence class 
\begin{align}\label{eq:eqclass}
[S(x,y)] : = \{ &T(x,y) \in \mathbb{F}^{n \times n}[x,y] : T(x,y) = S(x,y) + L(x,y)P(x) \nonumber\\
&+ P(y) R(x,y) \ \ \mathrm{for} \ \mathrm{some} \ L(x,y), R(x,y) \in \mathbb{F}^{n \times n}[x,y] \}.
\end{align}
Moreover, if $[S_1(x,y)]=[S_2(x,y)]$, we write $S_1(x,y) \equiv S_2(x,y)$.
 Then, for any block matrix $X \in \mathbb{F}^{nk \times nk}$ we define $\psi(X) = [\phi(X)]$, where $\phi$ is again the map defined in~\eqref{eq:phidef}. In this setting, $\psi(X)$ is seen as an equivalence class, and we may summarize our analysis on the two companion matrices with the following equations:
\begin{equation}\label{eq:psicompanion}
 \psi(X C_P^{(1)}) = [\phi(X)x], \qquad \psi( C_P^{(2)} X) = [y \phi(X)], \qquad \mathrm{for} \ \mathrm{all} \ \ X \in \mathbb{F}^{kn \times kn}.
\end{equation}
Note that, by linearity,~\eqref{eq:psicompanion} imply in turn that  for any polynomials $v(y)$ and $w(x)$ we have:
\begin{equation}\label{eq:psicompanion2}
 \psi(X w(C_P^{(1)})) = [\phi(X)w(x)], \qquad \psi( v(C_P^{(2)}) X) = [v(y) \phi(X)], \qquad \mathrm{for} \ \mathrm{all} \ \ X \in \mathbb{F}^{kn \times kn}.
\end{equation}
 However, in the equivalence class $\psi(X)$ there exists a unique bivariate polynomial having grade equal to $\deg P-1$ separately in both $x$ and $y$, as we now prove in Theorem~\ref{biquotient}. (Clearly, this unique bivariate polynomial must be precisely $\phi(X)$, as the latter has indeed grade $\deg P-1$ separately in both $x$ and $y$.)
Theorem~\ref{biquotient} gives the appropriate matrix polynomial analogue to Euclidean polynomial division applied both in $x$ and $y$.
\begin{theorem}\label{biquotient}
Let $P(z)=\sum_{i=0}^k P_i z^i \in \mathbb{F}^{n \times n}[z]$ be a matrix polynomial with $P_k$ invertible, and let $F(x,y) = \sum_{i=0}^{k_1} \sum_{j=0}^{k_2} F_{ij} x^i y^j \in \mathbb{F}^{n \times n}[x,y]$ be a bivariate matrix polynomial. Then there is a unique decomposition $F(x,y) = Q(x,y) + A(x,y) P(x) + P(y) B(x,y) + P(y) C(x,y) P(x)$ such that 
\begin{enumerate}
\item[(i)] $Q(x,y)$, $A(x,y)$, $B(x,y)$ and $C(x,y)$ are all bivariate matrix polynomials,
\item[(ii)] $Q(x,y)$ has degree at most $k-1$ separately in $x$ and $y$, 
\item[(iii)] $A(x,y)$ has degree at most $k-1$ in $y$, and
\item[(iv)] $B(x,y)$ has degree at most $k-1$ in $x$.
\end{enumerate}
Moreover, $Q(x,y)$ is determined uniquely by $P(z)$ and $F(x,y)$.
\end{theorem}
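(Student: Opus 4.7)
The plan is to iterate Theorem \ref{quotient} three times, treating $F(x,y)$ alternately as a univariate matrix polynomial in $x$ with coefficients in $\mathbb{F}^{n \times n}[y]$, and as a univariate matrix polynomial in $y$ with coefficients in $\mathbb{F}^{n \times n}[x]$. Since $P_k$ is invertible in $\mathbb{F}^{n \times n}$, and hence invertible in the larger coefficient rings $\mathbb{F}^{n \times n}[y]$ and $\mathbb{F}^{n \times n}[x]$, Theorem \ref{quotient} applies verbatim in these settings.

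First, I would view $F(x,y) \in (\mathbb{F}^{n \times n}[y])[x]$ and apply the right-division part of Theorem \ref{quotient} (the one involving $R_P$) in the variable $x$ to obtain a unique decomposition $F(x,y) = \tilde{A}(x,y) P(x) + R(x,y)$ where $R(x,y)$ has degree at most $k-1$ in $x$ and $\tilde{A}(x,y)$ is a bivariate polynomial with no a priori control on its $y$-degree. Second, I would view $R(x,y) \in (\mathbb{F}^{n \times n}[x])[y]$ and apply the left-division part of Theorem \ref{quotient} (the one involving $L_P$) in the variable $y$ to write $R(x,y) = P(y) B(x,y) + Q(x,y)$ with $Q(x,y)$ of degree at most $k-1$ in $y$. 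A key point: since $R$ has degree at most $k-1$ in $x$ and $P(y)$ contains no $x$, inspection of the analogue of~\eqref{eq:whatisA} shows that both $B$ and $Q$ inherit degree at most $k-1$ in $x$, which secures (ii) and (iv). Third, I would apply the same left-division of Theorem \ref{quotient} in $y$ to $\tilde{A}(x,y)$, yielding $\tilde{A}(x,y) = P(y) C(x,y) + A(x,y)$ with $A(x,y)$ of degree at most $k-1$ in $y$, which gives (iii). Substituting gives the stated decomposition.

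For uniqueness, I would subtract two such decompositions to get
\[ 0 = (Q_1 - Q_2) + (A_1 - A_2) P(x) + P(y)(B_1 - B_2) + P(y)(C_1 - C_2) P(x), \]
and read this as an identity in $(\mathbb{F}^{n \times n}[y])[x]$. The ``remainder'' $(Q_1 - Q_2) + P(y)(B_1 - B_2)$ has degree at most $k-1$ in $x$ by (ii) and (iv), so uniqueness of division by $P(x)$ on the right forces both $(A_1 - A_2) + P(y)(C_1 - C_2) = 0$ and $(Q_1 - Q_2) + P(y)(B_1 - B_2) = 0$. Viewing each of these in $(\mathbb{F}^{n \times n}[x])[y]$, the uniqueness of left-division by $P(y)$ (noting that $A_1 - A_2$ and $Q_1 - Q_2$ have degree at most $k-1$ in $y$) immediately forces $A_1 = A_2$, $C_1 = C_2$, $Q_1 = Q_2$, and $B_1 = B_2$. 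The final statement that $Q(x,y)$ depends only on $P$ and $F$ is an immediate consequence.

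The main obstacle is purely bookkeeping: one must verify at each division step that the degree bound obtained in the previous step is preserved in the variable that was already reduced, which relies on the transparent fact that $P(y)$ is $x$-free and conversely. Once this is clear, the bivariate theorem reduces without friction to two applications of the univariate result.
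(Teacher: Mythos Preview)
Your proof is correct and follows essentially the same strategy as the paper: apply Theorem~\ref{quotient} first in $x$ and then in $y$, tracking that the degree bound in the already-reduced variable is preserved. The paper differs only cosmetically---it works over the field $\mathbb{F}(y)$ and then argues via~\eqref{eq:whatisA} that the quotient and remainder are actually polynomial in $y$, whereas you work directly over $\mathbb{F}[y]$ (which is fine since the proof of Theorem~\ref{quotient} only needs $P_k$ invertible); note one harmless labeling slip: the decomposition $F = \tilde{A}P(x) + R$ is the $L_P$ part of Theorem~\ref{quotient}, not the $R_P$ part, and conversely for $R = P(y)B + Q$.
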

\begin{proof} 
Let us first apply Theorem~\ref{quotient} taking $\mathbb{F}(y)$ as the base field. Then there exist unique $A(x,y)$ and $Q_1(x,y)$ such that $F(x,y) = A_1(x,y) P(x) + Q_1(x,y)$, where $A_1(x,y)$ and $Q_1(x,y)$ are polynomials in $x$. Furthermore, $\deg_x Q_1(x,y) \leq k-1$. A priori, the entries of $A_1(x,y)$ and $Q_1(x,y)$ could be rational functions in $y$. However, a careful analysis of~\eqref{eq:whatisA} shows that the coefficients of $A_1(x,y)=\sum_i A_{1,i}(y) x^i$ can be obtained by solving a block linear system $M w = v$, say, where $v$ depends polynomially in $y$ whereas $M$ is \emph{constant} in $y$. Hence, $A_1(x,y)$, and a fortiori $Q_1(x,y) = F(x,y) - A_1(x,y)P(x)$, are also polynomials in $y$. At this point we can apply Theorem~\ref{quotient} again to write (uniquely) $Q_1(x,y) = Q(x,y) + P(y) B(x,y)$ and $A_1(x,y)=A(x,y) + P(y) C(x,y)$, where $\deg_y Q(x,y)$ and $\deg_y A(x,y)$ are both at most $k-1$. Moreover, comparing again with~\eqref{eq:whatisA}, it is easy to check that it must also hold $\deg_x Q(x,y) \leq k-1$ and $\deg_x B(x,y) \leq k-1$.  Hence, $F(x,y) = Q(x,y) + A(x,y) P(x) + P(y) B(x,y) + P(y) C(x,y) P(x)$ is the sought decomposition.
\end{proof}

The next example illustrates the concepts just introduced.
\begin{example}\label{examplequotients}
Let $P(x)=I x^2 + P_1 x+ P_0$ and consider the block matrix $X=\begin{bmatrix} A & B\\
C & D
\end{bmatrix}$. We have $\phi(X) = A x y + B y + C x + D$. Let $Y=C_P^{(2)} X C_P^{(1)}$. Then, using~\eqref{eq:psicompanion}, we know that 
$\psi(Y) = [y \phi(X) x] =[A x^2 y^2 + B x y^2 + C x^2 y + D x y].$ In particular, observing that $I x^2 \equiv - P_1 x - P_0$ and that $I y^2 \equiv -P_1 y - P_0$, we have 
\begin{align*}
A& x^2 y^2 + B x y^2 + C x^2 y + D x y \equiv (-P_1 y -P_0)(A x^2 + Bx) + C x^2 y + D x y \\
&\equiv (-P_1 A y - P_0 A + C y)(-P_1 x - P_0) + (D - P_1 B) x y - P_0 B x \\
&= (P_1 A P_1 + D - P_1 B - C P_1) x y + (P_1 A P_0 - C P_0) y + (P_0 A P_1 - P_0 B) x + P_0 A P_0\\
& = \phi(Y)  , 
\end{align*}
 as by Theorem~\ref{biquotient} in the equivalence class $\psi(Y)$ there exists a unique element of grade $\deg P-1$ separately in $x$ and $y$, and by the definition of the mapping $\phi$ this unique element must be equal to $\phi(Y)$.

Equivalently, we could have taken quotients directly on the bases. The argument is that $\begin{bmatrix} y^2 I & y I\end{bmatrix}X\begin{bmatrix} x^2 I\\
x I
\end{bmatrix} \equiv \begin{bmatrix} -P_1 y -P_0 & y I\end{bmatrix}X\begin{bmatrix}
-P_1 x -P_0\\
x I
\end{bmatrix} = \psi(Y)$, and leads to the same result. 

A third way of computing $Y = C_P^{(2)} X C_P^{(1)}$ is to formally apply the linear algebraic definition of matrix multiplication, and then apply the mapping $\phi$ as in~\eqref{eq:phidef} (forgetting about quotient spaces). 

One remarkable consequence of Theorem~\ref{biquotient} is that these three approaches are all equivalent.
Note that the same remarks, using~\eqref{eq:psicompanion2}, apply to any block matrix of the form $\psi(v(C_P^{(2)}) X w(C_P^{(1)}))$, for any pair of polynomials $v(y)$ and $w(x)$.

For this example, we have taken a monic $P(x)$ for simplicity. If its leading coefficient $P_k$ is not the identity matrix, but still is nonsingular, then the explicit formulas become more complicated and involve $P_k^{-1}$.
\end{example}



\subsection{Beyond $\mathbf{\mathbb{DL}}$ space}
The key message in 
 Theorem~\ref{thm:Barnett} is that one can start with the pencil in $\mathbb{DL}$ associated with ansatz polynomial $v=1$ and repeatedly multiply the first companion matrix $C_P^{(1)}$ on the right, to obtain all the pencils in the ``canonical basis'' of $\mathbb{DL}$~\cite{Mackey_05_01}. In the scalar case ($n=1$) there is a bijection between pencils in $\mathbb{DL}$ and polynomials in $C_P^{(1)}$. However, the situation is quite different when $n>1$, as the vector space of polynomials in $C_P^{(1)}$ can have dimension up to $kn$, depending on the Jordan structure of $P(x)$. 
\begin{remark}For 
some matrix polynomials $P(x)$, the dimension of the vector space of polynomials in $C_P^{(1)}$ can be much lower than $nk$, although generically this upper bound is achieved. An extreme example is $P(x)=p(x)I$ for some scalar $p(x)$, as in this case the dimension achieves the lowest possible bound, which is $k$.\end{remark} 

\subsubsection{Definition of $\mathbb{BDL}(P,v)$}
In light of the above discussion, it makes sense to investigate the pencils of the form $v(C_P^{(2)})\mathbb{DL}(P,1)=\mathbb{DL}(P,1) v(C_P^{(1)})$ for $\deg v > k-1$, because for a generic $P$ they do not belong to $\mathbb{DL}$.  We refer to the space of such pencils as the ``beyond $\mathbb{DL}$'' space of potential linearizations and write 
\begin{equation}  \label{eq:BDLdef}
\mathbb{DL}(P,1) v(C_P^{(1)})=:\mathbb{BDL}(P,v).   
\end{equation}
Note that $\mathbb{DL}$ is now seen as a subspace of $\mathbb{BDL}$: if $\deg v \leq k-1$, then $\mathbb{BDL}(P,v)=\mathbb{DL}(P,v)$. 

An important fact is that, even if the degree of the polynomial $v(x)$ is larger than $k-1$, it still holds that $\mathbb{BDL}(P,v)=v(C_P^{(2)})\mathbb{DL}(P,1)=\mathbb{DL}(P,1) v(C_P^{(1)})$. When $\deg v \leq k-1$, i.e., for pencils in $\mathbb{DL}$, this is a consequence of the two equivalent versions of Barnett's theorem. We now prove this more generally. 
\begin{theorem}
Let $P(x)$ be a matrix polynomial of degree $k$ with nonsingular leading coefficient.
For any polynomial $v(x)$ we have 
\[
v(C_P^{(2)})\mathbb{DL}(P,1)=\mathbb{DL}(P,1) v(C_P^{(1)}),
\]
where $C_P^{(1)},C_P^{(2)}$ are the companion matrices as in~\eqref{eq:comp1},~\eqref{eq:comp2} and $\mathbb{DL}(P,1)$ is the pencil as in~\eqref{eq:Lbezfunc}. 
\end{theorem}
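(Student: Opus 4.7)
My plan is to reduce the pencil equality to the two matrix identities $v(C_P^{(2)}) X_0 = X_0 v(C_P^{(1)})$ and $v(C_P^{(2)}) Y_0 = Y_0 v(C_P^{(1)})$, where $\mathbb{DL}(P,1) = \lambda X_0 + Y_0$, and to establish each via the map $\psi$ combined with the injectivity guaranteed by Theorem~\ref{biquotient}.

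First I would specialize~\eqref{eq:F}--\eqref{eq:G} to $v\equiv 1$ to obtain the explicit formulas $\phi(X_0) = F_0(x,y) = (P(x)-P(y))/(x-y)$ and $\phi(Y_0) = G_0(x,y) = (xP(y) - yP(x))/(x-y)$; both are genuine bivariate matrix polynomials in $\mathbb{F}^{n\times n}[x,y]$ because $x-y$ divides both numerators over $\mathbb{F}[x,y]$. By~\eqref{eq:psicompanion2}, we have $\psi(X_0 v(C_P^{(1)})) = [F_0(x,y)\,v(x)]$ and $\psi(v(C_P^{(2)}) X_0) = [v(y)\,F_0(x,y)]$, so to obtain the first identity it suffices to show $(v(y) - v(x))\,F_0(x,y)$ lies in the sum $L_{P(x)} + R_{P(y)}$ of ideals defining the equivalence~\eqref{eq:eqclass} (noting that the scalar $v$ commutes with the matrix $F_0$).

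To this end, I would write the scalar identity $v(x) - v(y) = (x-y)\tilde{v}(x,y)$ for some $\tilde{v}\in\mathbb{F}[x,y]$ and use $(x-y)F_0(x,y) = P(x) - P(y)$ to compute
\[
(v(x) - v(y))\,F_0(x,y) = \tilde{v}(x,y)\bigl(P(x) - P(y)\bigr) = \tilde{v}(x,y)\,P(x) - P(y)\,\tilde{v}(x,y),
\]
which lies in $L_{P(x)} + R_{P(y)}$ because $\tilde{v}$ is scalar and commutes with $P$. An entirely analogous computation for $Y_0$ uses $(x-y)G_0(x,y) = xP(y) - yP(x)$ to yield
\[
(v(x) - v(y))\,G_0(x,y) = y\tilde{v}(x,y)\,P(x) - P(y)\,x\tilde{v}(x,y) \in L_{P(x)} + R_{P(y)}.
\]

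To finish, I would argue that $\psi$ is injective: the class $\psi(X)$ contains $\phi(X)$, which has grade at most $k-1$ in each of $x$ and $y$, and by Theorem~\ref{biquotient} such a representative is unique in its class; since $\phi$ itself is a bijection between $nk\times nk$ block matrices and bivariate polynomials of such grades, $\psi(X_1) = \psi(X_2)$ forces $X_1 = X_2$. Thus the equivalence-class equalities derived above promote to the matrix identities $v(C_P^{(2)}) X_0 = X_0 v(C_P^{(1)})$ and $v(C_P^{(2)}) Y_0 = Y_0 v(C_P^{(1)})$, and reassembling them as coefficients of $\lambda^1$ and $\lambda^0$ yields $v(C_P^{(2)})\mathbb{DL}(P,1) = \mathbb{DL}(P,1)\,v(C_P^{(1)})$. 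The only mildly subtle point is this final injectivity step; once it is isolated, the remainder is a direct bivariate manipulation that works uniformly for all $v$, with no need to separate $\deg v \leq k-1$ from higher degrees.
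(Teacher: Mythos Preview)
Your proof is correct and takes a genuinely different, more direct route than the paper's.  Both arguments work in the bivariate polynomial framework, push the question through the map $\psi$, and close with the injectivity of $\psi$ coming from Theorem~\ref{biquotient}.  The paper, however, reduces by linearity to monomials $v(x)=x^{j}$, invokes the minimal polynomial of $C_P^{(1)}$ to bound the range of $j$, and then argues by induction on $j$, using the recurrence from the proof of Barnett's theorem at the inductive step.  You instead treat an arbitrary $v$ in one stroke: the identity $v(x)-v(y)=(x-y)\tilde v(x,y)$ combined with $(x-y)F_0(x,y)=P(x)-P(y)$ (and the analogous relation for $G_0$) immediately places $(v(x)-v(y))F_0(x,y)$ in $L_{P(x)}+R_{P(y)}$, so no induction, no minimal polynomial argument, and no separate treatment of small versus large $\deg v$ are needed.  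Your splitting of the pencil into the $X_0$ and $Y_0$ components is also a mild simplification over the paper's pencil-level manipulation.  One cosmetic slip: in the $G_0$ computation the sign is flipped (the correct identity is $(v(x)-v(y))G_0(x,y)=x\tilde v(x,y)P(y)-y\tilde v(x,y)P(x)$), but since membership in $L_{P(x)}+R_{P(y)}$ is unaffected by sign, the conclusion stands.
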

\begin{proof}
Since both $C_P^{(2)} - \lam I$ and $C_P^{(1)} - \lam I$ are strong linearizations of $P(\lam)$, they have the same minimal polynomial $m(\lam)$. Let $\gamma = \deg m(\lam)$. By linearity, it suffices to check the statement for $v(x)=x^j$, $j=0,\dots,\gamma-1$.

We give an argument by induction. Note first that the base case, i.e., $v(x)=x^0=1$, is a trivial identity.
From the recurrence relation displayed in the proof of Barnett's theorem, we have that $\psi(\mathbb{DL}(P,1) (C_P^{(1)})^{j-1}) \equiv \mathcal{B}(P,x^{j-1} I)$. 
 By the inductive hypothesis we also have $\psi(\mathbb{DL}(P,1) (C_P^{(1)})^{j-1}) \equiv \psi((C_P^{(2)})^{j-1}\mathbb{DL}(P,1)) \equiv \mathcal{B}(P,y^{j-1} I)$. 

Now, let $\Delta(x,y)=\phi(\mathbb{DL}(P,1) (C_P^{(1)})^{j} - (C_P^{(2)})^j \mathbb{DL}(P,1))$. By the definitions of 
$\psi$ and $\phi$, for any block matrix $X$ we have $[\phi(X)] = \psi(X)$, where the notation $[\cdot]$ denotes an equivalence class $\modulo R_{P(y)}$ and $\modulo L_{P(x)}$ as in~\eqref{eq:eqclass}. 
More explicitly, for any bivariate matrix polynomial $S(x,y)$ in the equivalence class $\psi(X)$ there exist matrix polynomials $L(x,y)$, $R(x,y)$ and $C(x,y)$ such that 
\[\phi(X) = S(x,y) + L(x,y) P(x) + P(y) R(x,y) + P(y) C(x,y) P(x).\] 
Therefore, it must be 
\[\Delta(x,y) = (x-y)\mathcal{B}(P,x^{j-1} I) + L(x,y)P(x) + P(y) R(x,y) + P(y) C(x,y) P(x)\] for some $L(x,y)$, $R(x,y)$, and $C(x,y)$. But 
\[(x-y)\mathcal{B}(P,x^{j-1} I) = P(y) x^{j-1} - y^{j-1}P(x),\] and hence, $\Delta(x,y) \equiv 0 + L_1(x,y)P(x) + P(y)R_1(x,y) + P(y) C(x,y) P(x)$. 

Finally, observe that Theorem~\ref{biquotient} guarantees the existence and uniqueness of a matrix polynomial of grade $\deg P-1$ separately in $x$ and $y$ in the equivalence class $\psi(\phi^{-1}(\Delta(x,y)))$, and note that the latter must be equal to $\phi(\phi^{-1}(\Delta(x,y))=\Delta(x,y)$. On the other hand, $0$ has grade $\deg P -1$ separately in $x$ and $y$, and hence, $0=\Delta(x,y)$.
\end{proof}
\subsubsection{Properties of $\mathbb{BDL}(P,v)$}
We now investigate some properties of 
the  $\mathbb{BDL}(P,v)$ pencils defined in~\eqref{eq:BDLdef}. 
Clearly, an eigenvalue exclusion theorem continues to hold. Indeed, by assumption $\mathbb{DL}(P,1)$ is a linearization, because we suppose $P(x)$ has no eigenvalues at infinity. Thus, $\mathbb{BDL}(P,v)$ will be a linearization as long as $v(C_P^{(1)})$ is nonsingular, which happens precisely when $P(x)$ and $v(x) I$ do not share an eigenvalue. Nonetheless, it is less clear what properties, if any, pencils in $\mathbb{BDL}$ will inherit from pencils in $\mathbb{DL}$. Besides the theoretical interest of deriving its properties, $\mathbb{BDL}$ finds an application in the theory of the sign characteristics of structured matrix polynomials~\cite{signpaper}.
To investigate this matter, we 
will apply Theorem~\ref{quotient} taking $V(x)=v(x) I$. 

To analyze the implications of Theorem~\ref{quotient} and Theorem~\ref{biquotient}, it is worth summarizing the theory that we have built so far with a commuting diagram. Let $\mathbb{BDL}(P,v) = \lambda X + Y$ and $\mathbb{DL}(P,1) = \lambda \tilde{X} + \tilde{Y}$. Below, $F(x,y)$ 
(resp.~$\tilde{F}(x,y)$)
 denotes the continuous analogue of $X$ 
(resp.~$\tilde{X}$). 
$$
\begin{tikzpicture}[scale=2]
\node (A) at (0,2) {$\tilde{X}$};
\node (B) at (3,2) {$\tilde{F}(x,y)$};
\node (C) at (2,1) {$\tilde{F}(x,y)v(x)$};
\node (D) at (0,0) {$X$};
\node (E) at (3,0) {$F(x,y)$};
\node (F) at (4,1) {$v(y)\tilde{F}(x,y)$};
\path[->,font=\scriptsize,]
(A) edge[bend left] node[right]{$A \mapsto A v(C_P^{(1)})$} (D)
(A) edge[bend right] node[left]{$A \mapsto v(C_P^{(2)}) A$} (D)
(C) edge node[left]{quotient modulo $L_P$} (E)
(A) edge node[above]{$\phi$} (B)
(D) edge node[above]{$\phi$} (E)
(B) edge node[left]{$H(x,y) \mapsto v(x)H(x,y)$} (C)
(B) edge node[right]{$H(x,y) \mapsto H(x,y)v(y)$} (F)
(F) edge node[right]{quotient modulo $R_P$} (E);
\end{tikzpicture} 
$$
An analogous diagram can be drawn for $Y$, $\tilde{Y}$, $G(x,y)$, and $\tilde{G}(x,y)$. The diagram above illustrates that we may work in the bivariate polynomial framework (right side of the diagram), which is often more convenient for algebraic manipulations than the matrix framework (left side). In particular, using Theorem~\ref{quotient}, Theorem~\ref{biquotient} and~\eqref{eq:DLrelations}, we obtain the following relations:
\begin{equation}\label{eq:shiftedsumbeyond}
v(y)P(x) \equiv S(y) P(x)\!=\!  F(x,y) x + G(x,y), \ \ yF(x,y) + G(x,y) \! =\! P(y)Q(x)\equiv P(y) v(x)
\end{equation}

In~\eqref{eq:shiftedsumbeyond}, $Q(x)$ and $S(y)$ are, respectively, the unique univariate matrix polynomials of grade $\deg P-1$ in $x$ (resp.~$y$) satisfying $v(x) I = Q(x) + A(x) P(x)$ (resp.~$v(y) I = S(y) + P(y) B(y)$ ) for some matrix polynomial $A(x)$ (resp.~$B(y)$). The existence and the uniqueness of $Q(x)$ and $S(y)$ follow from Theorem~\ref{quotient}.
To see how~\eqref{eq:shiftedsumbeyond} can be derived, take for example the second equation, as the argument is similar for the first one. By Lemma~\ref{lem:L1L2}, we have that $y \tilde{F}(x,y)v(x) + \tilde{G}(x,y) v(x) = P(y) v(x)$. Applying Theorem~\ref{quotient}, there is a unique matrix polynomial $Q(x)$ having grade $\deg P-1$ and such that $v(x) I = A(x) P(x) + Q(x)$. Hence, $y \tilde{F}(x,y)v(x) + \tilde{G}(x,y) v(x) = P(y) Q(x) + P(y) A(x) P(x)$. On the other hand, as illustrated by the diagram above $F(x,y) = \tilde{F}(x,y) v(x) + A(x,y)P(x)$ and similarly $G(x,y) = \tilde{G}(x,y) v(x) + B(x,y)P(x)$ for some bivariate matrix polynomials $A(x,y)$ and $B(x,y)$. Since $y F(x,y) + G(x,y)$ has grade $\deg P-1$ in $x$, by the uniqueness of the decomposition in Theorem~\ref{biquotient} we may conclude that $y F(x,y) + G(x,y) = P(y) Q(x)$.

From~\eqref{eq:shiftedsumbeyond} it appears clear that a pencil in $\mathbb{BDL}$ generally has \emph{distinct} left and right ansatz vectors, and that these ansatz vectors are now block vectors, associated with left and right ansatz matrix polynomials. For convenience of those readers who happen to be more familiar with the matrix viewpoint, we also display what we obtain by translating back~\eqref{eq:shiftedsumbeyond}:

\begin{equation}
X\css Y =  \begin{bmatrix}
S_{k-1}\\
\vdots\\
S_0
\end{bmatrix}
\left[P_k, P_{k-1},\ldots,P_0\right], \qquad
X\rss Y = 
\begin{bmatrix}
P_k\\
P_{k-1}\\
\vdots\\
P_0
\end{bmatrix} \left[Q_{k-1},\ldots,Q_0\right].
\end{equation}

Note that if $\deg v \leq k-1$ then $S(x)=Q(x)=v(x) I$ and we recover the familiar shifted sum equations for $\mathbb{DL}$.

The eigenvalue exclusion theorem continues to hold for $\mathbb{BDL}$ with a natural extension that replaces the ansatz vector $v$ with the matrix polynomial $Q$ (or $S$).
\begin{theorem}[Eigenvalue exclusion theorem for $\mathbb{BDL}$]
Let $P(x)$ be a matrix polynomial of degree $k$ with nonsingular leading coefficient, and let $v(x)$ be a scalar polynomial of arbitrary degree. Then, the pencil
$\mathbb{BDL}(P,v)$ defined as in~\eqref{eq:BDLdef} is a strong linearization of $P(x)$ if and only if $P(x)$ and $Q(x)$ (or $S(x)$) do not share an eigenpair, where $Q(x)$ and $S(x)$ are the unique matrix polynomials satisfying~\eqref{eq:shiftedsumbeyond}.
\end{theorem}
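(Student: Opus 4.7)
The plan is to reduce the claim to an invertibility question about $v(C_P^{(1)})$ by exploiting the factorization $\mathbb{BDL}(P,v) = \mathbb{DL}(P,1)\,v(C_P^{(1)})$ in~\eqref{eq:BDLdef}.

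First I would argue that $\mathbb{DL}(P,1)$ is itself a strong linearization of $P(x)$. Indeed, the ansatz $v(y)=1$ viewed as a polynomial of grade $k-1$ has all $k-1$ eigenvalues at infinity, whereas the hypothesis that $P_k$ is invertible ensures $P(x)$ has no eigenvalues at infinity; hence Theorem~\ref{thm:eigexclusion} gives that $\mathbb{DL}(P,1)$ is a linearization, and Theorem~\ref{thm:linearization} upgrades this to a strong linearization.

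Next, since $v(C_P^{(1)})$ is $\lambda$-independent, right multiplication by it is a constant action on the pencil. If $v(C_P^{(1)})$ is invertible, then $\mathbb{BDL}(P,v)$ is strictly equivalent to $\mathbb{DL}(P,1)$ and inherits the strong linearization property. Conversely, any nonzero $u$ in the kernel of $v(C_P^{(1)})$ produces $\mathbb{BDL}(P,v)(\lambda)u = \mathbb{DL}(P,1)(\lambda)\,v(C_P^{(1)})\,u \equiv 0$, so $\mathbb{BDL}(P,v)$ is a singular matrix polynomial, hence not a linearization. The theorem therefore reduces to showing that $v(C_P^{(1)})$ is nonsingular if and only if $P(x)$ and $Q(x)$ share no eigenpair.

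For this translation I would use that $C_P^{(1)}$ is a strong linearization of $P(x)$, so its finite spectrum coincides (with its Jordan structure) with that of $P(x)$; consequently $v(C_P^{(1)})$ is nonsingular exactly when $v$ has no root among the eigenvalues of $P$. Combining this with Theorem~\ref{quotient}, which writes $v(x)I = Q(x) + A(x)P(x)$, I observe that for any right eigenpair $(\lambda_0,w)$ of $P$ we have $Q(\lambda_0)w = v(\lambda_0)w$, so $P$ and $Q$ share the eigenpair $(\lambda_0,w)$ precisely when $v(\lambda_0)=0$. Chaining the two equivalences concludes the argument, and the symmetric ``or $S(x)$'' statement follows from the dual factorization $\mathbb{BDL}(P,v) = v(C_P^{(2)})\,\mathbb{DL}(P,1)$ together with the left analogue $v(y)I = S(y) + P(y)B(y)$. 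The main technical point I expect to have to spell out carefully is the spectral correspondence between $C_P^{(1)}$ and $P(x)$, so that the algebraic criterion ``$v$ has no root in the spectrum of $P$'' and the matrix-level criterion ``$v(C_P^{(1)})$ is invertible'' are genuinely the same condition.
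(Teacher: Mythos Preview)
Your proposal is correct and follows essentially the same route as the paper: the paper already notes (just before the theorem) that $\mathbb{DL}(P,1)$ is a linearization because $P$ has no infinite eigenvalues, so $\mathbb{BDL}(P,v)=\mathbb{DL}(P,1)\,v(C_P^{(1)})$ is a strong linearization precisely when $v(C_P^{(1)})$ is nonsingular, i.e., when $P$ and $v(x)I$ share no eigenpair; its formal proof then performs exactly your final step, writing $Q(x)=v(x)I-A(x)P(x)$ and observing that $Q(\lambda_0)w=v(\lambda_0)w$ whenever $P(\lambda_0)w=0$. The only cosmetic difference is that you spell out the strict-equivalence/singularity dichotomy and the spectral correspondence for $C_P^{(1)}$ more explicitly than the paper does.
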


\begin{proof}
We prove the eigenvalue exclusion theorem for $P$ and $Q$, as the proof for $P$ and $S$ is analogous. We know that $\mathbb{BDL}(P,v)$ is a strong linearization if and only if we cannot find an eigenvalue $x_0$ and a nonzero vector $w$ such that $P(x_0)w=v(x_0)I w=0$. (Here, we are implicitly using the fact that if $x_0$ is an eigenvalue of $v(x) I$, then any nonzero vector is a corresponding eigenvector.)  But in the notation of Theorem~\ref{quotient}, we can write uniquely $Q(x)=v(x)I-A(x)P(x)$, and hence, $Q(x_0)w=v(x_0)w-A(x_0)P(x_0)w$. Hence, $P(x)$ and $v(x)I$ share an eigenpair if and only if $P(x)$ and $Q(x)$ do.
\end{proof}

We now show that pencils in $\mathbb{BDL}$ still are Lerer--Tismenetsky B\'{e}zoutians. It is convenient to first state a lemma and a corollary.

\begin{lemma}\label{Toeplitz2}
Let $U \in \mathbb{F}^{nk \times nk}$ be an invertible block-Toeplitz upper-triangular matrix. Then $(U^{\mathcal{B}})^{-1} = (U^{-1})^{\mathcal{B}}$.
\end{lemma}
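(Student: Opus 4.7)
My plan is to deduce the lemma from two simple ingredients: an identification of blockwise transpose with conjugation by the block-reversal matrix $R$ from~\eqref{eq:sigR}, together with the classical fact that the inverse of a block-Toeplitz upper-triangular matrix is again of the same form.

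First I would establish the key identity: for \emph{any} block-Toeplitz matrix $X\in\mathbb{F}^{nk\times nk}$, one has $X^{\mathcal{B}} = R X R$. Indeed, $(RXR)_{ij}=X_{k+1-i,\,k+1-j}$, and if $X$ is block-Toeplitz, so that the $(p,q)$-block depends only on $q-p$, then $X_{k+1-i,\,k+1-j}=X_{ji}=(X^{\mathcal{B}})_{ij}$. Since $R^2=I$, conjugation by $R$ is an involution on the set of block-Toeplitz matrices.

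Next I would show that $U^{-1}$ is again block-Toeplitz upper-triangular. Writing $U=\sum_{i=0}^{k-1}N^i(I_k\otimes U_i)$ where $N=N_k\otimes I_n$ is the nilpotent block-shift, one has $N(I_k\otimes A)=(I_k\otimes A)N$ for every $A\in\mathbb{F}^{n\times n}$, so the set $\mathcal{A}$ of block-Toeplitz upper-triangular matrices is a subalgebra of $\mathbb{F}^{nk\times nk}$. Invertibility of $U$ is equivalent to invertibility of the diagonal block $U_0$; assuming this, the coefficients of $V=\sum_{i}N^i(I_k\otimes V_i)$ satisfying $UV=I$ are determined uniquely and recursively by $U_0V_0=I$ and $\sum_{p+q=\ell}U_pV_q=0$ for $\ell\geq1$. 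Hence $U^{-1}\in\mathcal{A}$ and, in particular, is block-Toeplitz.

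Combining the two ingredients, the lemma follows in a single line:
\[
(U^{\mathcal{B}})^{-1} \;=\; (R U R)^{-1} \;=\; R^{-1} U^{-1} R^{-1} \;=\; R U^{-1} R \;=\; (U^{-1})^{\mathcal{B}},
\]
where the first and last equalities are the $R$-conjugation identity applied to the block-Toeplitz matrices $U$ and $U^{-1}$, respectively. The only obstacle worth flagging is that the conjugation identity $X^{\mathcal{B}}=RXR$ requires its argument to be block-Toeplitz, which is precisely why the preliminary step showing $U^{-1}\in\mathcal{A}$ is needed; without it one cannot close the loop.
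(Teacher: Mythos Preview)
Your proof is correct and takes a genuinely different route from the paper's. The paper argues over an arbitrary ring with unity: setting $L=U^T$ (the transpose over the base ring, i.e., the blockwise transpose), it invokes explicit group-inverse formulae for lower-triangular Toeplitz matrices due to Hartwig and then verifies by direct computation that $(L^{-1})^T U = U(L^{-1})^T = I$. Your argument instead rests on two self-contained observations: the conjugation identity $X^{\mathcal{B}}=RXR$ for block-Toeplitz $X$, and closure of the subalgebra of block-Toeplitz upper-triangular matrices under inversion (established by the standard recursion on the block coefficients). The one-line chain $(U^{\mathcal{B}})^{-1}=(RUR)^{-1}=RU^{-1}R=(U^{-1})^{\mathcal{B}}$ then finishes. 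Your approach is more elementary and avoids the external reference entirely; the paper's approach, by contrast, is phrased for matrices over an arbitrary ring with unity, though your argument lifts to that setting without change (the $R$-conjugation identity and the recursive inverse need only that the diagonal block be a unit in the ring).
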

\begin{proof}
We claim that, more generally, if $U$ is an invertible Toeplitz upper-triangular matrix with elements in any ring with unity, and $L=U^T$, then $U^{-1}=(L^{-1})^T$. Taking $\mathbb{F}^{n \times n}$ as the base ring yields the statement. To prove the claim, recall that if $L^{-1}$ exists then $L^{-1}=L^{\#}$, where the latter notation denotes the group inverse of $L$. Explicit formulae for $L^{\#}$ appeared in~\cite[eq.~3.4]{Hartwig_77}\footnote{It should be noted that if $L^{-1}$ exists then $L_{11}$ must be invertible too, where $L_{11}$ denotes the top-left element of $L$: if the base ring is taken to be $\mathbb{F}^{n \times n}$, that is the $n \times n$ top-left block of $L$.  Moreover,~\cite[Theorem 2]{Hartwig_77} implies that ~\cite[eq.~3.2]{Hartwig_77} is satisfied.}. Hence, it can be checked by direct computation that $(L^{-1})^T U = U (L^{-1})^T= I$.
\end{proof}

\begin{corollary}\label{Toeplitz}
Let $U \in \mathbb{F}^{nk \times nk}$ be an invertible block-Toeplitz upper-triangular matrix and $\Upsilon=\begin{bmatrix} v_1 I_n\\
\vdots\\
v_k I_n\end{bmatrix}$, $v_i \in \mathbb{F}$.
 Then $(U^{-1} \Upsilon)^{\mathcal{B}} = \Upsilon^{\mathcal{B}} 
(U^{\mathcal{B}})^{-1}$.
\end{corollary}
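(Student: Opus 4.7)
The plan is to reduce the corollary to Lemma~\ref{Toeplitz2} via a straightforward product rule for the blockwise transpose that holds whenever one of the two factors has scalar-multiple-of-identity blocks. Concretely, I would first record the auxiliary identity
\[
(A \Upsilon)^{\mathcal{B}} = \Upsilon^{\mathcal{B}} A^{\mathcal{B}}
\]
for any block matrix $A \in \mathbb{F}^{nk\times nk}$ and any block column $\Upsilon$ whose $k$ blocks are scalar multiples of $I_n$. This is not the generic block-transpose reversal law (which fails because $n\times n$ blocks do not commute); it works here precisely because each $v_j I_n$ lies in the center of $\mathbb{F}^{n \times n}$.

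To verify the auxiliary identity, I would compute the $i$-th block of both sides by hand. On the left, the $i$-th block of the block row $(A\Upsilon)^{\mathcal{B}}$ equals the $i$-th block of the block column $A \Upsilon$, namely $\sum_j A_{ij} (v_j I_n) = \sum_j v_j A_{ij}$. On the right, $\Upsilon^{\mathcal{B}} = \begin{bmatrix} v_1 I_n & \cdots & v_k I_n\end{bmatrix}$ and $(A^{\mathcal{B}})_{ji} = A_{ij}$, so the $i$-th block of $\Upsilon^{\mathcal{B}} A^{\mathcal{B}}$ is $\sum_j (v_j I_n) A_{ij} = \sum_j v_j A_{ij}$. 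The two expressions coincide.

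Finally, I would apply this identity with $A = U^{-1}$ and then invoke Lemma~\ref{Toeplitz2}:
\[
(U^{-1}\Upsilon)^{\mathcal{B}} = \Upsilon^{\mathcal{B}} (U^{-1})^{\mathcal{B}} = \Upsilon^{\mathcal{B}} (U^{\mathcal{B}})^{-1},
\]
which is exactly the claimed equality. The only potential pitfall is remembering that the product rule $(AB)^{\mathcal{B}} = B^{\mathcal{B}} A^{\mathcal{B}}$ does \emph{not} hold in general for the blockwise transpose, so one must justify its use by the centrality of the blocks of $\Upsilon$; once that is observed, the proof is essentially a one-line combination of the auxiliary identity with Lemma~\ref{Toeplitz2}, and I expect no further obstacles.
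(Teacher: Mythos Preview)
Your proposal is correct and follows essentially the same approach as the paper: you use the fact that the blocks of $\Upsilon$ are scalar multiples of $I_n$ (hence central) to justify the block-transpose product rule, and then apply Lemma~\ref{Toeplitz2}. The paper's proof is a one-sentence version of exactly this argument.
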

\begin{proof}
Since the block elements of $\Upsilon$ commute with any other matrix, it suffices to apply Lemma~\ref{Toeplitz2}.
\end{proof}

\begin{theorem}\label{thm:PQ=SP}
If $Q(x)$, $A(x)$, $S(x)$, and $B(x)$ are defined as in Theorem~\ref{quotient} with $V(x)=v(x) I$, then $P(x) Q(x) = S(x) P(x)$ and $A(x)=B(x)$.
\end{theorem}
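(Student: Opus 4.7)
My plan is to use the uniqueness from Theorem~\ref{quotient} combined with a degree argument that exploits the invertibility of $P_k$. I first write down the two quotient identities:
\begin{equation*}
v(x) I = A(x) P(x) + Q(x), \qquad v(x) I = P(x) B(x) + S(x),
\end{equation*}
where $\deg Q, \deg S \leq k-1$. Multiplying the first on the left by $P(x)$ and the second on the right by $P(x)$, and using that the scalar polynomial $v(x)$ commutes with $P(x)$, I obtain
\begin{equation*}
P(x) A(x) P(x) + P(x) Q(x) = v(x) P(x) = P(x) B(x) P(x) + S(x) P(x),
\end{equation*}
and hence
\begin{equation*}
P(x)\bigl[A(x) - B(x)\bigr] P(x) = S(x) P(x) - P(x) Q(x).
\end{equation*}

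Next I compare degrees. The right-hand side has degree at most $2k-1$ since $\deg Q, \deg S \leq k-1$ and $\deg P = k$. Suppose, for contradiction, that $A(x) - B(x) \neq 0$, with leading coefficient $C \neq 0$ in degree $d \geq 0$. Since $P_k$ is invertible, left-multiplication by $P_k$ is injective on matrices, so $P_k C \neq 0$; by the same token $(P_k C) P_k \neq 0$. Thus $P(x)[A(x)-B(x)]P(x)$ has a nonzero leading term of degree $2k+d \geq 2k$, contradicting the degree bound on the right-hand side. Therefore $A(x) = B(x)$, and substituting back yields $S(x) P(x) = P(x) Q(x)$.

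The argument is short because the heavy lifting was already done in Theorem~\ref{quotient}: existence and uniqueness of the two-sided quotient representations. The only subtle point is the degree/leading-coefficient step, where I need invertibility of $P_k$ to conclude that $P_k C P_k \neq 0$ whenever $C \neq 0$; without that hypothesis the conclusion could fail. I do not anticipate further obstacles, since once $A = B$ is established, the identity $P(x)Q(x) = S(x)P(x)$ follows by inspection from the displayed equation above.
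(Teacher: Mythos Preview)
Your proof is correct, and it takes a genuinely different route from the paper's. The paper first shows $A(x)=B(x)$ by going back to the explicit block upper-triangular Toeplitz system~\eqref{eq:whatisA} that determines the coefficients of $A$, block-transposing it to obtain the system for $B$, and then invoking Corollary~\ref{Toeplitz} (which in turn rests on Lemma~\ref{Toeplitz2} about inverses of block-Toeplitz upper-triangular matrices) to match the two solutions term by term. Only after establishing $A=B$ does the paper deduce $P(x)Q(x)=S(x)P(x)$.

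Your argument sidesteps all of that machinery: you multiply the two quotient identities by $P(x)$ on the appropriate side, subtract, and use a clean degree count together with the invertibility of $P_k$ to force $A=B$. This is more elementary and entirely self-contained; in fact it renders Lemma~\ref{Toeplitz2} and Corollary~\ref{Toeplitz} unnecessary for the purposes of this theorem. The paper's approach has the minor advantage of making the block-Toeplitz structure explicit, but yours is shorter and arguably more transparent about why the invertibility of $P_k$ is the key hypothesis.
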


\begin{proof}
Let $v(x)I-Q(x)=A(x)P(x)$ and $v(x)I-S(x)=P(x)B(x)$. We may assume $\deg v \geq k$, as otherwise
the statement is trivially verified since $Q(x)=S(x)=v(x)I$ and $A(x)=B(x)=0$. Note first that $\deg A = \deg B =\deg v -k$ because by assumption the leading coefficient of $P(x)$ is not a zero divisor. The  coefficients of $A(x)$ must satisfy~\eqref{eq:whatisA}, while block transposing~\eqref{eq:whatisA} we obtain an equation that must be satisfied by the coefficients of $B(x)$. Equating term by term and using Corollary~\ref{Toeplitz} we obtain $A(x)=B(x)$, and hence, $P(x)Q(x)-S(x)P(x)=P(x)B(x)P(x)-P(x)A(x)P(x)=0$.
\end{proof}

Hence, it follows that 
$\mathbb{BDL}(P,v)$ is a Lerer--Tismenetsky B\'{e}zoutian (compare the result with~\eqref{eq:Lbezfunc}). We now present the following immediate corollary: 
\begin{corollary}\label{cor:whatisBDL}
It holds 
\[\mathbb{BDL}(P,v) =\lambda B_{S,P}(Q,P) + B_{P,xS}(P,xQ),\]
where $Q(x)$ and $S(x)$ are as in Theorem~\ref{quotient} the unique matrix polynomials of grade $k-1$ satisfying $v(x) I = P(x) A(x) + S(x) = A(x) P(x) + Q(x)$ for some matrix polynomial $A(x)$.
\end{corollary}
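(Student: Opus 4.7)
The plan is to derive the explicit form of $\mathbb{BDL}(P,v) = \lambda X + Y$ from the shifted sum relations~\eqref{eq:shiftedsumbeyond}, and then recognize both coefficient matrices as instances of the Lerer--Tismenetsky B\'ezoutian from Definition~\ref{def:bezmatpoly}.

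First, I will apply $\phi$ to $X$ and $Y$, writing $F(x,y) = \phi(X)$ and $G(x,y) = \phi(Y)$, so that~\eqref{eq:shiftedsumbeyond} reads
\[
F(x,y)x + G(x,y) = S(y)P(x), \qquad yF(x,y) + G(x,y) = P(y)Q(x).
\]
Subtracting the second from the first yields $(x-y)F(x,y) = S(y)P(x) - P(y)Q(x)$, and substituting back gives $(x-y)G(x,y) = xP(y)Q(x) - yS(y)P(x)$. Thus
\[
F(x,y) = \frac{S(y)P(x)-P(y)Q(x)}{x-y}, \qquad G(x,y) = \frac{P(y)\,xQ(x) - yS(y)\,P(x)}{x-y}.
\]

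Next I will verify that these are valid Lerer--Tismenetsky B\'ezoutian functions. For $F$, taking $P^{(1)} = Q$, $P^{(2)} = P$, $M^{(1)} = P$, and $M^{(2)} = S$, the compatibility condition $M^{(1)}P^{(1)} = M^{(2)}P^{(2)}$ becomes $P(x)Q(x) = S(x)P(x)$, which is exactly Theorem~\ref{thm:PQ=SP}. Thus $F(x,y) = \mathcal{B}_{S,P}(Q,P)$, and unpacking the definition of the B\'ezout block matrix via $\phi$ gives $X = B_{S,P}(Q,P)$. For $G$, taking $P^{(1)} = P$, $P^{(2)} = xQ$, $M^{(1)} = xS$, $M^{(2)} = P$, the required identity is $xS(x)P(x) = P(x)\,xQ(x)$, which again follows from Theorem~\ref{thm:PQ=SP}; hence $G(x,y) = \mathcal{B}_{P,xS}(P,xQ)$ and $Y = B_{P,xS}(P,xQ)$.

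Combining these two identifications yields $\mathbb{BDL}(P,v) = \lambda X + Y = \lambda B_{S,P}(Q,P) + B_{P,xS}(P,xQ)$, completing the proof. No step is really an obstacle here: the arithmetic of solving~\eqref{eq:shiftedsumbeyond} for $F$ and $G$ is elementary, and the only substantive input is Theorem~\ref{thm:PQ=SP}, which supplies the commutation required to legitimize the two Lerer--Tismenetsky B\'ezoutians. The mild subtlety worth stating cleanly is that the grades used in Definition~\ref{def:bezmatpoly} match the grades of $Q$ and $S$ (both $k-1$) and of $xQ$ and $xS$ (both $k$), so that $X$ and $Y$ come out as $nk \times nk$ block matrices, consistent with $\mathbb{BDL}(P,v)$ being a pencil of that size.
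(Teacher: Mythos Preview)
Your proof is correct and follows essentially the same approach as the paper: solve the shifted-sum relations~\eqref{eq:shiftedsumbeyond} for $F(x,y)$ and $G(x,y)$, then invoke Theorem~\ref{thm:PQ=SP} to verify the compatibility conditions needed to recognize each as a Lerer--Tismenetsky B\'ezoutian. If anything, you give more detail than the paper, which only spells out the leading term and leaves the trailing term as ``analogous''; your explicit check that the grades line up so that $X$ and $Y$ are $nk\times nk$ is a nice touch.
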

\begin{proof}
Observe first that by Definition~\ref{def:bezmatpoly} $B_{S,P}(Q,P)$ and $B_{P,xS}(P,xQ)$ are well defined since Theorem~\ref{thm:PQ=SP} implies that $P(x)Q(x)=S(x)P(x)$ and $xS(x)P(x)=P(x)xQ(x)$. The proof of the corollary is then a straightforward application of~\eqref{eq:shiftedsumbeyond}. For example, for the leading term we have from~\eqref{eq:shiftedsumbeyond} that $F(x,y) \!=\! \frac{S(y)P(x) - P(y)Q(x)}{x-y} \!=\! \mathcal{B}_{S,P}(Q,P)$. Translating back from bivariate polynomial to block matrices, we find that $\phi^{-1}(F(x,y))=B_{S,P}(Q,P)$. The proof for the trailing term of the pencil is analogous and we omit the details.
\end{proof}

Once again, if $\deg v \leq k-1$ then we recover $\mathbb{DL}(P,v)$ because $S(x)=Q(x)=v(x)I$. More generally, we have $S(x)-Q(x)=[A(x),P(x)]:=A(x)P(x)-P(x)A(x)$.

For the rest of this section, we assume that the underlying field $\mathbb{F}$ is a metric space; for simplicity, we focus on the case $\mathbb{F}=\mathbb{C}$. 
As mentioned in Section~\ref{sec:genpolybases}, one property of a pencil in $\mathbb{DL}$ is block symmetry. It turns out that this property does not hold for pencils in $\mathbb{BDL}$. Nonetheless, an even deeper algebraic property is preserved. Since each matrix coefficient in a pencil in $\mathbb{DL}$ is a 
 B\'{e}zout matrix, the inverses of those matrices are block Hankel -- note that unless $n=1$, the inverse of a block Hankel matrix needs not be block symmetric. The general result is: a matrix is the inverse of a block Hankel if and only if it is a 
 B\'{e}zout matrix~\cite[Corollary 3.4]{LT2}. However, for completeness, we give a simple proof for the special case of our interest.

\begin{theorem}
Let $\lambda X + Y$ be a pencil either in $\mathbb{DL}$ or in $\mathbb{BDL}$ associated with a matrix polynomial $P(x) \in \mathbb{C}[x]^{n \times n}$ with an invertible leading coefficient. 
Then, $X^{-1}$ and $Y^{-1}$ are both block Hankel matrices if the inverses exist. 
\end{theorem}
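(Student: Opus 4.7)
The plan is to leverage the B\'ezoutian characterization of $\mathbb{DL}$ and $\mathbb{BDL}$ pencils established earlier in the paper. By Lemma~\ref{lem:Lbzefunc} (for $\mathbb{DL}$) and Corollary~\ref{cor:whatisBDL} (for $\mathbb{BDL}$), both matrices are Lerer--Tismenetsky B\'ezoutians: $X = B_{S,P}(Q,P)$ and $Y = B_{P,xS}(P,xQ)$, where $Q$ and $S$ are the unique grade-$(k-1)$ matrix polynomials supplied by Theorem~\ref{quotient}. The statement is therefore the special case of the classical result \cite[Corollary 3.4]{LT2} that the inverse of an invertible Lerer--Tismenetsky B\'ezoutian is a block Hankel matrix. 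Since the authors indicate they wish to give a self-contained proof in the present setting, I sketch such a proof next.

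Assume $X$ is invertible; the argument for $Y$ is entirely analogous, replacing the pair $(Q,P)$ by $(xQ,P)$ and using the rational function $xR(x)$ suitably truncated. By the eigenvalue exclusion theorem for $\mathbb{BDL}$, $P(x)$ and $Q(x)$ share no eigenpair, so the rational matrix function
\[
R(x) \;:=\; Q(x)P(x)^{-1} \;=\; P(x)^{-1}S(x)
\]
is well defined, with the second equality coming from Theorem~\ref{thm:PQ=SP}. Because $\deg Q \le k-1 < \deg P$ and $P_k$ is invertible, $R(x) = \sum_{n\ge 1} R_n\, x^{-n}$ is a convergent Laurent series at infinity. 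Let $H \in \mathbb{F}^{nk \times nk}$ be the block Hankel matrix with blocks $H_{ij} = R_{i+j-1}$. I claim that $XH = I_{nk}$, which, as $H$ is block Hankel, yields the theorem.

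The verification of the claim is carried out in the bivariate polynomial viewpoint. Pre- and post-multiplying the defining identity
$F(x,y) = \frac{S(y)P(x) - P(y)Q(x)}{x-y}$ by $P(y)^{-1}$ on the left and $P(x)^{-1}$ on the right gives the generating-function identity
\[
P(y)^{-1} F(x,y) P(x)^{-1} \;=\; \frac{R(y) - R(x)}{x-y} \;=\; \sum_{a,b \ge 1} R_{a+b-1}\, x^{-a}y^{-b},
\]
whose right-hand side is precisely the formal generating function of the block Hankel kernel associated with $H$. Multiplying back by $P(y)$ and $P(x)$ and restricting to the finite-dimensional piece of grade $k-1$ separately in $x$ and $y$---a reduction made unambiguous by Theorem~\ref{biquotient}---translates into the matrix identity $XH = I_{nk}$.

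The main obstacle is this final coefficient-matching step: when $n > 1$ the products do not commute, so one must keep careful track of the order of factors. Here Theorem~\ref{thm:PQ=SP} is essential, because it is what allows the two expressions $Q(x)P(x)^{-1}$ and $P(x)^{-1}S(x)$ for $R(x)$ to coincide and makes the intertwining with $P(y)$ and $P(x)$ close up. Uniqueness of the bivariate decomposition supplied by Theorem~\ref{biquotient} is then what lets the formal series identity descend rigorously to an equality of $nk \times nk$ blocks, completing the verification.
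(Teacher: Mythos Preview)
Your route is genuinely different from the paper's. The paper proceeds through Barnett's theorem: it writes any $X$ (and analogously $Y$) arising in $\mathbb{DL}$ or $\mathbb{BDL}$ as $B(P,I)\,w(C_P^{(1)})$ for an appropriate scalar polynomial $w$, first checks the base case that $H_0:=B(P,I)^{-1}$ is block Hankel, then uses the double identity $H_j:=(C_P^{(1)})^{-j}H_0=H_0(C_P^{(2)})^{-j}$ together with the special shape of $(C_P^{(1)})^{-1}$ and $(C_P^{(2)})^{-1}$ to show inductively that each $H_j$ is block Hankel, and concludes by expressing $w(C_P^{(1)})^{-1}$ as a polynomial in $(C_P^{(1)})^{-1}$ (the case of singular $P(0)$ is handled by a limiting argument). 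Your approach, by contrast, constructs the putative block-Hankel inverse explicitly from the Markov parameters of $R(x)=Q(x)P(x)^{-1}$; this is the classical Lerer--Tismenetsky route and, as you note, invoking \cite[Corollary~3.4]{LT2} already settles the matter.

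Your self-contained sketch, however, has a real gap at the last step. The generating-function identity $P(y)^{-1}F(x,y)P(x)^{-1}=\sum_{a,b\ge 1}R_{a+b-1}x^{-a}y^{-b}$ is correct, but ``multiplying back by $P(y)$ and $P(x)$ and invoking Theorem~\ref{biquotient}'' does not by itself produce the block-matrix identity $XH=I$. Theorem~\ref{biquotient} concerns unique decompositions of bivariate \emph{polynomials} modulo $L_{P(x)}$ and $R_{P(y)}$; it says nothing about Laurent series and does not encode the index contraction that the matrix product $XH$ represents. Concretely, take $n=1$, $P(x)=x^2$, $v(x)=x$: then $R(x)=1/x$ gives $H=\left(\begin{smallmatrix}1&0\\0&0\end{smallmatrix}\right)$, while $F(x,y)=xy$ gives $X=\left(\begin{smallmatrix}1&0\\0&0\end{smallmatrix}\right)$, and $XH\neq I$. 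Your argument as written never uses the hypothesis ``$X$ invertible'' (your appeal to the eigenvalue exclusion theorem for $\mathbb{BDL}$ is also slightly off: that theorem characterizes when the \emph{pencil} is a linearization, not when $X$ alone is nonsingular; the right tool here is the kernel formula~\eqref{eq:kerbezout}). A complete direct verification of $XH=I$ does require coprimeness and typically goes through a realization-theoretic computation rather than through Theorem~\ref{biquotient}. A smaller point: for $Y$ you propose replacing $Q$ by $xQ$, but $\deg(xQ)$ may equal $k$, so $xQ(x)P(x)^{-1}$ has a nonzero constant term at infinity and the Hankel construction must be offset accordingly; the $Y$ case is thus not ``entirely analogous'' without a word of explanation.
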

\begin{proof}
Note first that, by Lemma~\ref{lem:Lbzefunc}, it suffices to show that $B(P,v I)^{-1}$ is block Hankel for all polynomials $v$ such that the inverse exists. 

Assume first $P(0)$ is invertible, implying that $C_P^{(1)}$ is invertible as well.
We have that $H_0=(B(P,I))^{-1}$ is block Hankel, as can be easily shown by induction on $k$~\cite[Sec.~2.1]{Gohberg_09_01}. By Barnett's theorem, $(C_P^{(2)})^j B(P,I) = B(P,I) (C_P^{(1)})^j$.  Then 
$H_j:=(C_P^{(1)})^{-j} H_0 = H_0 (C_P^{(2)})^{-j}$. Taking into account the structure of $(C_P^{(1)})^{-1}$ and $(C_P^{(2)})^{-1}$, we see by induction that $H_j$ is block Hankel. For a general $v(x)$ such that $v(x) I$ does not share eigenvalues with $P(x)$, we have that $(B(P,v I))^{-1}=v(C_P^{(1)})^{-1} H_0$. Since $v(C_P^{(1)})^{-1}$ is a polynomial in $(C_P^{(1)})^{-1}$, this is a linear combination of the $H_j$, hence is block Hankel. 

If $P(0)$ is singular consider any sequence
 $(P_n)_{n \in \mathbb{N}} = P(x) + E_n$ such that $\|E_n\| \rightarrow 0$ 
as $n\rightarrow\infty$
and $P_n(0)=P(0)+E_n$ is invertible for all $n$ (such a sequence exists because singular matrices are nowhere dense). Since the 
 B\'{e}zout matrix is linear in its arguments, $B(P_n,v I) \rightarrow B(P,v I)$. In particular, $B(P_n,v I)$ is eventually invertible if and only if no root of $v(x)$ is an eigenvalue of $P(x)$. The inverse is continuous as a matrix function, and thus $B(P,v I)^{-1} = \lim_{n \rightarrow \infty} B(P_n,v I)^{-1}$. We conclude by observing that the limit of a sequence of block Hankel matrices is block Hankel.
\end{proof}

Note that the theorem above implies that if $\lambda_0$ is not an eigenvalue of $P$ then the evaluation of a linearization in $\mathbb{DL}$ or $\mathbb{BDL}$ at $\lambda=\lambda_0$ is the inverse of a block Hankel matrix.

\subsubsection{$\mathbb{BDL}(P,v)$ and structured matrix polynomials}
We now turn to exploring the connections between $\mathbb{BDL}(P,v)$ and structured matrix polynomials. 
Recall that a Hermitian matrix polynomial is a polynomial whose coefficients are all Hermitian matrices. If $P(x)$ is Hermitian we write $P^*(x)=P(x)$.
It is often argued that block-symmetry is important because, if $P(x)$ was Hermitian in the first place and $v(x)$ has real coefficients, then $\mathbb{DL}(P,v)$ is also Hermitian. Although $\mathbb{BDL}(P,v)$ is not block-symmetric, it still is Hermitian when $P(x)$ is Hermitian. 

\begin{theorem}\label{hermitian}
Let $P(x) \in \mathbb{C}^{n \times n}[x]$ be a Hermitian matrix polynomial with invertible leading coefficient and $v(x) \in \mathbb{R}[x]$ a scalar polynomial with real coefficients. Then, $\mathbb{BDL}(P,v)$ is a Hermitian pencil.
\end{theorem}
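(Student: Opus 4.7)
The plan is to leverage the factorization
\[
\mathbb{BDL}(P,v) = \mathbb{DL}(P,1)\, v(C_P^{(1)}) = v(C_P^{(2)})\, \mathbb{DL}(P,1)
\]
established in the preceding theorem, and thereby reduce Hermitianness of $\mathbb{BDL}(P,v)$ to two simpler ingredients.

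First, I would verify that $\mathbb{DL}(P,1)$ is itself Hermitian. Specializing the formulas derived after~\eqref{eq:G} (and used in the proof of Lemma~\ref{lem:Lbzefunc}) to $v \equiv 1$ yields the bivariate coefficients $F(x,y) = (P(x) - P(y))/(x-y)$ and $G(x,y) = (xP(y) - yP(x))/(x-y)$. Each is symmetric in the two variables, and each of its matrix coefficients is an $\mathbb{R}$-linear combination of $P_0,\dots,P_k$, hence Hermitian. Applying the last row of Table~\ref{tab:op}, i.e., the correspondence $X \mapsto X^*$ versus $F(x,y) \mapsto F^*(y,x)$, it follows that both block matrix coefficients of $\mathbb{DL}(P,1)$ are Hermitian.

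Second, I would show that the Hermitianness assumption on $P(x)$ forces $C_P^{(2)} = (C_P^{(1)})^*$. This is a direct inspection of the block structures~\eqref{eq:comp1} and~\eqref{eq:comp2}: the identity-block positions align correctly under conjugate-transposition, and the entries of the first block row of $(C_P^{(1)})^*$ coincide with those of the first block column of $C_P^{(2)}$ because $(P_k^{-1}P_j)^* = P_j^*(P_k^*)^{-1} = P_j P_k^{-1}$, using $P_j^* = P_j$ and $P_k^* = P_k$. Since $v(x)$ has real coefficients, polynomial functional calculus then gives $v(C_P^{(2)}) = v\bigl((C_P^{(1)})^*\bigr) = \bigl(v(C_P^{(1)})\bigr)^*$.

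Combining the two ingredients,
\[
\mathbb{BDL}(P,v)^* = \bigl(\mathbb{DL}(P,1)\, v(C_P^{(1)})\bigr)^* = \bigl(v(C_P^{(1)})\bigr)^*\, \mathbb{DL}(P,1)^* = v(C_P^{(2)})\, \mathbb{DL}(P,1) = \mathbb{BDL}(P,v),
\]
which gives the claim. I do not anticipate a substantive obstacle; the only mild care required is the bookkeeping distinction between conjugate-transposition of polynomial coefficients and conjugation of the formal variable, which is benign here because $P$ is Hermitian and $v$ has real coefficients. An alternative route, somewhat longer but self-contained, would bypass the companion-matrix identity altogether and instead use Corollary~\ref{cor:whatisBDL} together with the uniqueness in Theorem~\ref{quotient} to deduce $S(x) = Q(x)^*$, and then verify $F^*(y,x)=F(x,y)$ and $G^*(y,x)=G(x,y)$ directly from~\eqref{eq:shiftedsumbeyond}.
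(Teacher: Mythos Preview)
Your proof is correct but takes a genuinely different route from the paper. The paper argues via Corollary~\ref{cor:whatisBDL}: it writes $F(x,y)=\frac{P(y)Q(x)-S(y)P(x)}{x-y}$, then uses the uniqueness part of Theorem~\ref{quotient} to show that conjugate-transposing $S(x)=v(x)I-P(x)A(x)$ forces $S^*(x)=Q(x)$, and finally checks $F^*(y,x)=F(x,y)$ (and similarly for $G$) directly. This is precisely the alternative you sketch in your last sentence.

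Your main argument instead stays at the level of the factorization $\mathbb{BDL}(P,v)=\mathbb{DL}(P,1)\,v(C_P^{(1)})=v(C_P^{(2)})\,\mathbb{DL}(P,1)$ and exploits the clean identity $C_P^{(2)}=(C_P^{(1)})^*$ for Hermitian $P$. This is shorter and more conceptual: it isolates exactly why the structure propagates (companion matrices are adjoints of one another), and it makes the role of $v\in\mathbb{R}[x]$ transparent through the functional calculus step $v((C_P^{(1)})^*)=(v(C_P^{(1)}))^*$. The paper's approach, on the other hand, yields the additional byproduct $S^*(x)=Q(x)$ and $A^*(x)=A(x)$, which feeds directly into the proofs of Theorem~\ref{structures} and the other structured cases in Table~\ref{tab:stru}; your companion-matrix identity would need separate analogues (e.g., $C_P^{(2)}=-\Sigma(C_P^{(1)})^*\Sigma$ for $*$-even $P$) to handle those variants.
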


\begin{proof}
Recalling the explicit form of $\mathbb{BDL}(P,v) = \lambda X + Y$ from Corollary~\ref{cor:whatisBDL}, we have
$X= B_{S,P}(Q,P)$ and $Y=B_{P,xS}(P,xQ)$. Here, $Q(x)$ (resp.~$S(x)$) are as in Theorem~\ref{quotient} the unique matrix polynomials of grade $k-1$ such that $v(x) I = A(x) P(x) + S(x) = P(x) A(x) Q(x)$, where $A(x)$ is also unique and we are using Theorem~\ref{thm:PQ=SP} as well. Then $-X$ is associated with the 
Lerer--Tismenetsky
B\'{e}zoutian function $F(x,y)=\frac{P(y)Q(x) - S(y) P(x)}{x-y}$. By definition, $S(x) = v(x) I - P(x) A(x)$. Taking the transpose conjugate of this equation, and noting that by assumption $P(x)=P^*(x)$, $v(x)=v^*(x)$, we obtain $S^*(x)=v(x) I - A^*(x) P(x)$. But, by Theorem~\ref{quotient}, there is a \emph{unique} matrix polynomial $Q(x)$ of grade $k-1$ such that $v(x) I = Q(x) + A(x) P(x)$ for some $A(x)$. Thus, since $\deg S^*(x) = \deg S(x) \leq k-1$, we conclude that  $S^*(x)=Q(x)$. (Although not strictly needed in this proof, the uniqueness of $A(x)$ also implies $A^*(x) = A(x)$.) Hence, $F(x,y)=\frac{P(y)Q(x)-Q^*(y)P(x)}{x-y} = \frac{Q^*(y)P(x) - P(y) Q(x)}{y-x} = F^*(y,x)$, proving that $X$ is Hermitian because the formula holds for any $x,y$. 

Analogously $G(x,y)=\frac{P(y)xQ(x)-yQ^*(y)P(x)}{x-y}=\frac{yQ^*(y)P(x)-P(y)xQ(x)}{y-x}=G^*(y,x)$, allowing us to deduce that $Y$ is also Hermitian.
\end{proof}

The theory of functions of a matrix~\cite{high:FM} allows one to extend the definition of $\mathbb{BDL}$ to a general function $f$, rather than just a polynomial $v$, as long as $f$ is defined on the spectrum of $C_P^{(1)}$ (for a more formal definition see~\cite{high:FM}). One just puts $\mathbb{BDL}(P,f):=\mathbb{BDL}(P,v)$ where $v(x)$ is the interpolating polynomial such that $v(C_P^{(1)})=f(C_P^{(1)})$.

\begin{corollary}
Let $P(x) \in \mathbb{C}^{n \times n}[x]$ be 
a Hermitian matrix polynomial with invertible leading coefficient
 and $f: \mathbb{C} \rightarrow \mathbb{C}$ a function defined on the spectrum of $C_P^{(1)}$ and such that 
$f(x^*)=(f(x))^*$.
Then $\mathbb{BDL}(P,f)$ is a Hermitian pencil.
\end{corollary}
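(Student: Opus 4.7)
My plan is to reduce the corollary to Theorem~\ref{hermitian} by exhibiting a polynomial $v \in \mathbb{R}[x]$ with $v(C_P^{(1)})=f(C_P^{(1)})$. Once this is done, the definition $\mathbb{BDL}(P,f):=\mathbb{BDL}(P,v)$ together with~\eqref{eq:BDLdef} (which shows the pencil depends on $v$ only through $v(C_P^{(1)})$) makes the conclusion immediate from Theorem~\ref{hermitian}.

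The first step is to observe that the spectrum of $C_P^{(1)}$, together with the sizes of its Jordan blocks at each eigenvalue, is invariant under complex conjugation. Since $C_P^{(1)}-\lambda I$ is a strong linearization of $P(\lambda)$, this reduces to the analogous statement for $P$, which is immediate from the Hermitian identity $P(\lambda)^{*}=P(\bar\lambda)$: the partial multiplicities of $P$ at $\lambda$ and at $\bar\lambda$ coincide, so the same holds for $C_P^{(1)}$. In particular, the minimal polynomial of $C_P^{(1)}$ has real coefficients, and any Hermite interpolation datum on its spectrum is well-defined simultaneously at $\lambda$ and $\bar\lambda$ with matching orders.

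The second step constructs the real polynomial $v$. Let $v_0\in\mathbb{C}[x]$ be any polynomial with $v_0(C_P^{(1)})=f(C_P^{(1)})$, and define $v_0^{\sharp}(x):=\overline{v_0(\bar x)}$, the polynomial obtained from $v_0$ by conjugating its coefficients. A direct computation gives $v_0^{\sharp\,(j)}(\lambda)=\overline{v_0^{(j)}(\bar\lambda)}$ for every $\lambda\in\mathbb{C}$ and every $j\geq 0$. Applying this at $\lambda$ in the spectrum of $C_P^{(1)}$, using that $v_0$ interpolates $f$ at $\bar\lambda$ in the Hermite sense (which is legitimate by the first step), and using the hypothesis $f(x^{*})=(f(x))^{*}$ (understood, as is standard in the theory of matrix functions~\cite{high:FM}, to extend to derivatives at each eigenvalue), I get $v_0^{\sharp\,(j)}(\lambda)=f^{(j)}(\lambda)$ for the required orders $j$. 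Hence $v_0^{\sharp}(C_P^{(1)})=f(C_P^{(1)})=v_0(C_P^{(1)})$, and therefore $v(x):=\tfrac{1}{2}(v_0(x)+v_0^{\sharp}(x))\in\mathbb{R}[x]$ satisfies $v(C_P^{(1)})=f(C_P^{(1)})$.

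The third and final step is to invoke Theorem~\ref{hermitian} with this real $v$, which yields that $\mathbb{BDL}(P,v)$ is Hermitian, and then note that $\mathbb{BDL}(P,f)=\mathbb{BDL}(P,v)$ by definition. The only delicate point in the argument is the interpretation of the hypothesis $f(x^{*})=(f(x))^{*}$ for Hermite data at a defective eigenvalue; this is a mild assumption built into the standard framework of matrix functions, and once granted the rest is routine bookkeeping.
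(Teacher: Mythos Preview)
Your proposal is correct and follows essentially the same approach as the paper: reduce to Theorem~\ref{hermitian} by showing that the interpolating polynomial $v$ for $f$ on the spectrum of $C_P^{(1)}$ can be taken in $\mathbb{R}[x]$. The paper accomplishes this in one line by citing~\cite[Def.~1.4]{high:FM}, whereas you spell out the underlying argument (conjugate-symmetry of the Jordan data of $C_P^{(1)}$ and the averaging trick $v=\tfrac{1}{2}(v_0+v_0^{\sharp})$); the substance is the same.
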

\begin{proof}
It suffices to observe that the properties of $f$ and $P$ imply that $f(C_P^{(1)})=v(C_P^{(1)})$ with $v\in \mathbb{R}[x]$~\cite[Def.~1.4]{high:FM}. 
\end{proof}

In the monomial basis, other structures of interest have been defined, such as $*$-even, $*$-odd, $T$-even, $T$-odd (all these definitions can be extended to any alternating basis, such as Chebyshev) or $*$-palindromic, $*$-antipalindromic, $T$-palindromic, $T$-antipalindromic. For $\mathbb{DL}$, analogues of Theorem~\ref{hermitian} can be stated in all these cases~\cite{goodvibrations}. These properties extend to $\mathbb{BDL}$. We state and prove them for the $*$-even and the $*$-palindromic case: 

\begin{theorem}\label{structures}
Assume that 
$P(x)=P^*(-x)$ is $*$-even 
and with an invertible leading coefficient,
and that $f(x)=f^*(-x)$, and let 
$\Sigma$ be as in~\eqref{eq:sigR}. 
Then $\Sigma \mathbb{BDL}(P,f)$ is a $*$-even pencil.
Furthermore, if $P(x)=x^{k} P^*(x^{-1})$ is $*$-palindromic and $f(x)=x^{k-1}f(x^{-1})$, and 
defining the ``flip matrix'' $R$ as in \eqref{eq:sigR}, then $R \mathbb{BDL}(P,f)$ is a $*$-palindromic pencil.
\end{theorem}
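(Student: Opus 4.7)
The plan is to adapt the strategy of Theorem~\ref{hermitian} by combining the explicit Bezoutian formula of Corollary~\ref{cor:whatisBDL} with the uniqueness part of Theorem~\ref{quotient}. Writing $\mathbb{BDL}(P,v)=\lambda X+Y$ with $X=B_{S,P}(Q,P)$ and $Y=B_{P,xS}(P,xQ)$, let $F(x,y)$ and $G(x,y)$ be the bivariate matrix polynomials associated to $X$ and $Y$ via $\phi$. I first treat the case where $f=v$ is a polynomial satisfying the relevant symmetry; the extension to a general function $f$ then follows as in the corollary to Theorem~\ref{hermitian}, using $f(C_P^{(1)})=v(C_P^{(1)})$ for the interpolating polynomial together with the closure property of the spectrum of $C_P^{(1)}$ under $\lambda\mapsto -\lambda$ (respectively $\lambda\mapsto \lambda^{-1}$) inherited from the corresponding symmetry of $P$.

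For the $*$-even case, I conjugate-transpose the defining equation $v(x)I=A(x)P(x)+Q(x)$ coefficient-wise and substitute $x\mapsto -x$. Using $P(x)=P^*(-x)$ and $v(x)=v^*(-x)$, this becomes $v(x)I=P(x)A^*(-x)+Q^*(-x)$, and the uniqueness of the decomposition $v(x)I=P(x)B(x)+S(x)$ guaranteed by Theorem~\ref{quotient}, combined with $A=B$ from Theorem~\ref{thm:PQ=SP}, forces $A(x)=A^*(-x)$ and $S(x)=Q^*(-x)$. Via Table~\ref{tab:op2}, the condition that $\Sigma L$ be $*$-even translates into the bivariate identities $F(x,-y)=-F^*(y,-x)$ and $G(x,-y)=G^*(y,-x)$, both of which follow by direct substitution of the Bezoutian formulas for $F,G$ together with the relations $S(x)=Q^*(-x)$ and $P(x)=P^*(-x)$.

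The $*$-palindromic case follows the same template, with the analogous relation $Q(x)=x^{k-1}S^*(x^{-1})$ replacing $S(x)=Q^*(-x)$: it is obtained by conjugate-transposing the defining equation, substituting $x\mapsto x^{-1}$, multiplying by a suitable power of $x$ to recover polynomials, and invoking $x^k P^*(x^{-1})=P(x)$ together with Theorems~\ref{quotient} and~\ref{thm:PQ=SP}. Via Table~\ref{tab:op2} the $*$-palindromic condition on $RL$ translates into $G(x,y)=x^{k-1}y^{k-1}F^*(y^{-1},x^{-1})$, which upon substitution of the Bezoutian formulas reduces to the just-derived relation between $Q$ and $S$. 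The main obstacle is the grade bookkeeping in the palindromic case: multiplying by $x^{\deg v}$ shifts the degrees of $Q^*$ and $A^*$ that appear after substitution, and each factor must be carefully matched with the palindromic-reversal of $P$ in order to recover the canonical Bezoutian form; once this accounting is done, the verification of the target bivariate identity is a straightforward algebraic manipulation, parallel to the $*$-even case.
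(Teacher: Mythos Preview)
Your proposal is correct and follows essentially the same approach as the paper: both arguments invoke Corollary~\ref{cor:whatisBDL} for the B\'{e}zoutian representation of $F$ and $G$, use the uniqueness in Theorem~\ref{quotient} (together with Theorem~\ref{thm:PQ=SP}) to deduce the key relation $S(x)=Q^*(-x)$ in the $*$-even case, and then verify the required identities via Tables~\ref{tab:op} and~\ref{tab:op2}. The paper omits the $*$-palindromic details entirely, so your remarks on the grade bookkeeping there and on passing from $f$ to its interpolating polynomial $v$ via the spectral symmetry of $C_P^{(1)}$ go slightly beyond what the paper spells out, but remain in the same spirit.
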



\begin{proof}
The proof goes along the same lines as that of Theorem~\ref{hermitian}: we first use the functional viewpoint and the B\'{e}zoutian interpretation of $\mathbb{BDL}(P,v)=\lam X + Y$ (see in particular Corollary~\ref{cor:whatisBDL}) to map $\phi(X)=F(x,y) = \frac{-P(y)Q(x)+S(y)P(x)}{x-y}$ and 
$\phi(Y)=G(x,y)=\frac{P(y)xQ(x)-yS(y)P(x)}{x-y}$. Once again, here $Q(x)$ and $S(x)$ are as in Theorem~\ref{quotient} the unique matrix polynomials of grade $k-1$ such that $v(x) I = A(x) P(x) + S(x) = P(x) A(x) Q(x)$, where $A(x)$ is also unique and we are using Theorem~\ref{thm:PQ=SP} as well. Assume first that $P$ $*$-even: we claim that $\lam  X^* \Sigma +  Y^* \Sigma = -\lam \Sigma X + \Sigma Y$.  Indeed, note that $v(x)$, the interpolating polynomial of $f(x)$, must also satisfy $v^*(x)=v(-x)$. Taking the transpose conjugate of the equation $S(x)=v(x) I - P(x) B(x)$, and using Theorem~\ref{quotient} as in the proof of Theorem~\ref{hermitian}, we obtain $Q^*(x)=S(-x)$. This, together with Table~\ref{tab:op2}, implies that $\phi(-\Sigma X) = \frac{P(-y)Q(x)-Q^*(y)P(x)}{x+y} = -\frac{Q^*(y)P^*(-x)-P^*(y)Q(x)}{x+y} = \phi( X^* \Sigma)$.
Similarly, $\phi(\Sigma Y) = \frac{P(-y)xQ(x)+yQ^*(y)P(x)}{x+y} = \frac{yQ^*(y)P^*(-x)+P^*(y)xQ(x)}{y+x} = \phi( Y^* \Sigma)$.

The case of a $*$-palindromic $P$ is dealt with analogously and we omit the details.
\end{proof}

Similar statements hold for other structures. We summarize them in the following table, omitting the proofs as they are completely analogous to those of theorems~\ref{hermitian} and~\ref{structures}.

\begin{table}[htbp]
  \centering
  \caption{Structures of $P$, $\deg P=k$, and potential linearizations that are structure-preserving}
  \label{tab:stru}
\begin{tabular}{c|c|c}
Structure of $P$ & Requirement on $f$ & Pencil\\
\hline
Hermitian: $P(x)=P^*(x)$ & $f(x^*)=f^*(x)$ & $\mathbb{BDL}(P,f)$\\
skew-Hermitian: $P(x)=-P^*(x)$ & $f(x^*)=f^*(x)$  & $\mathbb{BDL}(P,f)$ \\
symmetric: $P(x)=P^T(x)$ & any $f(x)$ & $\mathbb{BDL}(P,f)$ \\
skew-symmetric: $P(x)=-P(x)^T$ & any $f(x)$ & $\mathbb{BDL}(P,f)$ \\
\hline
*-even: $P(x)=P^*(-x)$ & $f(x)=f^*(-x)$ & $\Sigma \mathbb{BDL}(P,f)$ \\
*-odd: $P(x)=-P^*(-x)$ & $f(x)=f^*(-x)$ & $\Sigma \mathbb{BDL}(P,f)$ \\
T-even: $P(x)=P^T(-x)$ & $f(x)=f(-x)$ & $\Sigma \mathbb{BDL}(P,f)$ \\
T-odd: $P(x)=-P^T(-x)$ & $f(x)=f(-x)$ & $\Sigma \mathbb{BDL}(P,f)$ \\
\hline
*-palindromic: $P(x)=x^kP^*(x^{-1})$ & $f(x)=x^{k-1} f^*(x^{-1})$& $R \mathbb{BDL}(P,f)$\\
*-antipalindromic: $P(x)=-x^{k}P^*(x^{-1})$ & $f(x)=x^{k-1} f^*(x^{-1})$& $R \mathbb{BDL}(P,f)$\\
T-palindromic: $P(x)=P^*(x^{-1})$ & $f(x)=x^{k-1} f(x^{-1})$& $R \mathbb{BDL}(P,f)$\\
T-antipalindromic: $P(x)=-P^T(x^{-1})$ & $f(x)=x^{k-1} f(x^{-1})$& $R \mathbb{BDL}(P,f)$\\
\end{tabular}
\end{table}

With a similar technique, one may produce pencils with a structure that is \emph{related} to that of the linearized matrix polynomial, e.g., if $P$ is $*$-odd and $f(x)=-f^*(-x)$, then $\Sigma \mathbb{BDL}(P,f)$ will be $*$-even. For lack of space we will not include a complete list of such variations on the theme in this paper. However, we note that generalizations of this kind are immediate to prove with the Lerer--Tismenetsky B\'{e}zoutian functional approach.

We conclude this section by giving the following result which has an application in the theory of sign characteristics~\cite{signpaper}: 

\begin{theorem}
Let $P(x)$ be $*$-palindromic of degree $k$, with nonsingular leading coefficient, and $f(x)=x^{k/2}$; if $k$ is odd, suppose furthermore that the square root is defined in such a way that $P(x)$ has no eigenvalues on the branch cut. Moreover, let $\mathbb{BDL}(P,f)=\lambda X + Y$ and let $R$ be defined as in~\eqref{eq:sigR}. Then $Z=i R X$ is a Hermitian matrix.
\end{theorem}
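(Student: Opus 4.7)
The plan is to translate the claim $Z = iRX$ is Hermitian into a bivariate polynomial identity, then verify it using the Lerer--Tismenetsky Bézoutian representation of $X$ from Corollary~\ref{cor:whatisBDL} together with the $*$-palindromic structure of $P$ and the special form $f(x)=x^{k/2}$.

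First I would reformulate: since $R^{*}=R$ and $(iRX)^{*}=-iX^{*}R$, the condition that $Z=iRX$ is Hermitian is equivalent to the matrix identity $X^{*}R+RX=0$. Using the dictionaries in Tables~\ref{tab:op} and~\ref{tab:op2} (monomial basis), $X\mapsto RX$ corresponds to $F(x,y)\mapsto y^{k-1}F(x,y^{-1})$, and $X\mapsto X^{*}R$ corresponds to $F(x,y)\mapsto x^{k-1}F^{*}(y,x^{-1})$. Thus the claim is equivalent to the bivariate identity
\[
x^{k-1}F^{*}(y,x^{-1})+y^{k-1}F(x,y^{-1})=0,
\]
where, by Corollary~\ref{cor:whatisBDL}, $F(x,y)=\frac{S(y)P(x)-P(y)Q(x)}{x-y}$, and $Q(x), S(x)$ are the unique grade-$(k-1)$ polynomials with $v(x)I=A(x)P(x)+Q(x)=P(x)A(x)+S(x)$ (same $A$ by Theorem~\ref{thm:PQ=SP}), with $v$ the interpolating polynomial of $f$ on $\mathrm{spec}(C_{P}^{(1)})$.

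Next I would substitute the palindromic identities $x^{k}P^{*}(x^{-1})=P(x)$ and $y^{k}P(y^{-1})=P^{*}(y)$ on both sides. After clearing the common denominator $xy-1$, the desired identity reduces to the \emph{key equation}
\[
P^{*}(y)\bigl[x^{k}S^{*}(x^{-1})-Q(x)\bigr]\;=\;\bigl[Q^{*}(y)-y^{k}S(y^{-1})\bigr]P(x). \qquad(\star)
\]
To derive this, I would take $^{*}$ of $v(x)I=P(x)A(x)+S(x)$, substitute $x\mapsto x^{-1}$, multiply by $x^{k}$, and apply $x^{k}P^{*}(x^{-1})=P(x)$ to obtain
\[
x^{k}v^{*}(x^{-1})I \;=\; A^{*}(x^{-1})P(x)+x^{k}S^{*}(x^{-1}).
\]
The decisive property of our $f(x)=x^{k/2}$ is that, as a formal expression (and respecting the chosen branch when $k$ is odd), $f(x)=x^{k}f^{*}(x^{-1})$. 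Combined with the palindromic pairing $\lambda\in\mathrm{spec}(C_{P}^{(1)})\Leftrightarrow\bar\lambda^{-1}\in\mathrm{spec}(C_{P}^{(1)})$, this forces $v(C_{P}^{(1)})=(C_{P}^{(1)})^{k}v^{*}((C_{P}^{(1)})^{-1})$, so that the ``defect'' $v(x)-x^{k}v^{*}(x^{-1})$ annihilates $C_{P}^{(1)}$. Comparing the two decompositions of $v(x)I$ and $x^{k}v^{*}(x^{-1})I$ through Theorem~\ref{quotient}, one shows that $x^{k}S^{*}(x^{-1})-Q(x)$ is a left multiple $H(x)P(x)$ and, by conjugate-transposition, $Q^{*}(y)-y^{k}S(y^{-1})=-P^{*}(y)H^{*}(y)$, with $H+H^{*}=0$. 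Substituting into $(\star)$ yields $P^{*}(y)H(x)P(x)=-P^{*}(y)H^{*}(y)P(x)$ on the line $x=y$-symmetric structure, which collapses to $0$ because of the skew-Hermitian property of $H$.

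As a sanity-check in the easy case, for $k$ even we have $\deg v=k/2\le k-1$, hence $A=0$ and $Q=S=x^{k/2}I$; plugging in, $x^{k}S^{*}(x^{-1})=x^{k/2}I=Q(x)$ identically, so both brackets in $(\star)$ vanish and the identity is trivial. The main obstacle is therefore the $k$ odd case, where $v$ is an interpolating polynomial of degree possibly exceeding $k-1$ and $x^{k}v^{*}(x^{-1})$ is not a polynomial. Here the argument must be carried out using the matrix-function interpretation of $v(C_{P}^{(1)})$ and, if necessary, a density/continuity argument on the coefficients of $P$; the branch-cut assumption guarantees that $f(C_{P}^{(1)})$ is well defined and that the identity $f(x)=x^{k}f^{*}(x^{-1})$ propagates consistently to the interpolating polynomial, so that $(\star)$ still holds up to the skew-Hermitian defect described above.
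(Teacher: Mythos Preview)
Your reduction is sound and nicely motivated: the reformulation $Z=Z^{*}\Leftrightarrow X^{*}R+RX=0$, the translation via Tables~\ref{tab:op} and~\ref{tab:op2} to the bivariate identity, and the algebra leading to the key equation
\[
P^{*}(y)\bigl[x^{k}S^{*}(x^{-1})-Q(x)\bigr]=\bigl[Q^{*}(y)-y^{k}S(y^{-1})\bigr]P(x)\qquad(\star)
\]
are all correct. For $k$ even this finishes the proof, since then $v(x)=x^{k/2}$ has degree $\le k-1$, $Q=S=x^{k/2}I$, and both brackets in $(\star)$ vanish identically.

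The gap is in the odd-$k$ case. Write $D(x):=x^{k}S^{*}(x^{-1})-Q(x)$; then $(\star)$ reads $P^{*}(y)D(x)=-D^{*}(y)P(x)$. If $D(x)=H(x)P(x)$ then $D^{*}(y)=P^{*}(y)H^{*}(y)$ and $(\star)$ forces $H(x)+H^{*}(y)\equiv 0$, hence $H$ must be a \emph{constant} skew-Hermitian matrix. Your sketch never establishes that $D(x)$ is a left multiple of $P(x)$ at all, let alone with constant quotient. Expanding via $Q=vI-AP$ and $S=vI-PA$ gives
\[
D(x)=\bigl[x^{k}v^{*}(x^{-1})-v(x)\bigr]I+\bigl[A(x)-A^{*}(x^{-1})\bigr]P(x),
\]
where $x^{k}v^{*}(x^{-1})$ and $A^{*}(x^{-1})$ are genuinely Laurent polynomials when $\deg v>k$. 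The assertion that the defect $v(x)-x^{k}v^{*}(x^{-1})$ ``annihilates $C_{P}^{(1)}$'' and therefore yields $D=HP$ with $H+H^{*}=0$ is the missing step: you neither make rigorous the passage from the functional identity $f(x)=x^{k}f^{*}(x^{-1})$ to a statement about the interpolating polynomial $v$, nor explain why the Laurent tails cancel to leave a constant skew-Hermitian $H$.

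The paper proceeds quite differently and avoids this obstacle. It first uses a density/continuity argument to reduce to the case of $P$ with all distinct eigenvalues, then works with the eigenvector ``Vandermonde'' matrix $V$ whose $i$th column is $\Lambda(\lambda_i)\otimes w_i$ and verifies $V^{*}RXV=-V^{*}X^{*}RV$ entrywise. The palindromic symmetry supplies $w_i^{*}P(1/\lambda_i^{*})=0$, the defining relations give $Q(\lambda_i)w_i=\lambda_i^{k/2}w_i$ and $u_iS(\lambda_i)=\lambda_i^{k/2}u_i$, and the diagonal case $\lambda_i\lambda_j^{*}=1$ is handled by L'H\^{o}pital on the B\'{e}zoutian quotient. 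This route never needs a polynomial identity like $(\star)$ for odd $k$; it only uses pointwise evaluations at eigenpairs, where the branch-cut hypothesis enters cleanly. Your approach is elegant when it works (even $k$), but to salvage it for odd $k$ you would still need the same density reduction, and then you might as well evaluate $(\star)$ at eigenpairs rather than prove it as an identity---which is essentially what the paper does.
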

\begin{proof}
We claim that the statement is true when $P(x)$ has all distinct eigenvalues. Then it must be true in general. This follows by continuity, if we consider a sequence $(P_n(x))_n$ of $*$-palindromic polynomials converging to $P(x)$ and such that $P_n(x)$ has all distinct eigenvalues, none of which lie on the branch cut. Such a sequence exists because the set of palindromic matrix polynomials with distinct eigenvalues is dense, as can be seen arguing on the characteristic polynomial seen as a polynomial function of the $n^2(k+1)$ independent real parameters.

It remains to prove the claim.
Since $X$ is the linear part of the pencil $\mathbb{BDL}(P,f)$, we get, by Corollary~\ref{cor:whatisBDL} and using the mapping $\phi$ defined in Section~\ref{sec:bivariate}, that $\phi(X)= \frac{P(y)Q(x)-S(y)P(x)}{x-y}$, where $v(x) I = Q(x) + A(x) P(x) = S(x) + P(x) A(x)$ are defined as in Theorem~\ref{quotient} and $v(x)$ is the interpolating polynomial of $f(x)$ on the eigenvalues of $P(x)$. By assumption $P(x)$ has $kn$ distinct eigenvalues. 
Denote by $(\lambda_i,w_i,u_i)$, $i=1,\ldots,nk$, an eigentriple, and consider the matrix in Vandermonde form $V$ whose $i$th column is
 $V_i=\Lambda(\lambda_i) \otimes w_i$ ($V$ is the matrix of eigenvectors of $C_P^{(1)}$); 
recall moreover that if $(\lambda_i,w_i,u_i)$ is an eigentriple then $(1/\lambda^*_i,u_i^*,w_i^*)$ is. Observe that by definition $Q(\lambda_i) w_i = \lambda_i^{k/2} w_i$ and $u_i S(\lambda_i) = u_i \lambda_i^{k/2}$. 

Our task is to prove that $R X = - X^* R$; observe that this is equivalent to $V^* R X V = - V^* X^* R V$. Using Table~\ref{tab:op} and Table~\ref{tab:op2}, we see that $V_i^* R X V_j$ is equal to the evaluation of $w_i^* \frac{y^k P(1/y)Q(x)-y^k S(1/y)P(x)}{x y-1} w_j$  at $(x=\lambda_j,y=\lambda_i^*)$. Suppose first that $\lambda_i \lambda_j^* \neq 1$. Then, using $P(\lambda_j) w_j = 0$ and $w_i^* P(1/\lambda_i^*)=0$, we get $V_i^* R X V_j=0$. When $\lambda_i^{-1} = \lambda_j^*$, we can evaluate the fraction using De L'H\^{o}pital rule, and obtain $w_i^* \frac{-(\lambda_i^*)^k S(1/\lambda_i^*)P'(\lambda_j)}{\lambda_i^*} w_j = - w_i^* (\lambda_i^*)^{k/2-1} P'(\lambda_j) w_j$. An argument similar to the previous one shows that 
$V_i^* X^* R V_j=0$ when $\lambda_i \lambda_j^* \neq 1$, and
 $V_i^* X^* R V_j=w_i^* (\lambda_i^*)^{k/2-1} P'(\lambda_j) w_j$ 
when $\lambda_i \lambda_j^* = 1$.

We have thus shown that 
$V_i^* X^* R V_j = -V_i^*  R X V_j$ for all $(i,j)$, establishing the claim. 
\end{proof}

\section{Conditioning of eigenvalues of $\mathbb{DL}(P)$}\label{sec:conditioning}
In~\cite{Higham_06_02}, a conditioning analysis is carried out for the
eigenvalues of the $\mathbb{DL}(P)$ pencils, which identifies
situations in which the $\mathbb{DL}(P)$ linearization itself
does not worsen the eigenvalue conditioning of the original matrix
polynomial $P(\lambda)$ expressed in the monomial basis.

Here, we use the bivariate polynomial viewpoint to analyze the
conditioning, using concise arguments and allowing for $P(\lambda)$
expressed in any polynomial basis.  As shown in~\cite{tisseur2000backward}, the first-order expansion of a simple
eigenvalue $\lambda_i$ of $P(\lambda)+\Delta P(\lambda)$ is
\begin{equation} \label{eq:matpolyeigpert} \lambda_i=\lambda_i(P) -
  \frac{y_i^*\Delta
    P(\lambda_i)x_i}{y_i^*P'(\lambda_i)x_i}+\mathcal{O}(\|\Delta
  P(\lambda_i)\|^2),
\end{equation}
where $y_i$ and $x_i$ are the left and right eigenvectors
corresponding to $\lambda_i$. This analysis motivated the conditioning 
results for multidimensional rootfinding~\cite{Nakatsukasa_13_01,noferini2015resultant}. 

When applied to a $\mathbb{DL}(P)$ pencil $L(\lambda)=\lambda X+Y$
with ansatz $v$, defining $\widehat x_i = \Lambda(\lam_i)\otimes x_i,
\widehat y_i =  \Lambda^T(\lam_i) \otimes y_i$, where $\Lambda(\lam) =
\left[\phi_{k-1}(\lam),\ldots,\phi_0(\lam)\right]^T$ as before and
noting that $L'(\lambda) = X$,~\eqref{eq:matpolyeigpert} becomes
\begin{equation}
  \label{eq:dlpert}
  \lambda_i=\lambda_i(L) - \frac{\widehat y_i^*\Delta L(\lambda_i)\widehat x_i}{\widehat y_i^*X\widehat x_i}+\mathcal{O}(\|\Delta L(\lambda_i)\|^2),
  \quad 
  i=1,\ldots, nk. 
\end{equation}
Recall from~\eqref{eq:Lbezfunc} that 
$X =-B(P,v) =  B(v,P)$, 
and note in Table~\ref{tab:op} that $\widehat
y_i^*X\widehat x_i$ is the evaluation of the $n\times n$ 
Lerer--Tismenetsky
\bezoutian\
function $\mathcal{B}(v,P))$ setting both variables equal to $\lambda_i$, followed
by left and right multiplication by $y_i^*$ and $x_i$. Therefore, 
since the Lerer--Tismenetsky \bezoutian\ function is a polynomial, hence continuous with respect to its arguments, 
we have
\begin{align*}
  \widehat y_i^*X\widehat x_i&= y_i^*\left(\lim_{s,t\rightarrow
      \lambda_i}\frac{v(s)P(t)-P(s)v(t)}{s-t} \right)x_i
  \\
  &= y_i^*\left(v'(\lambda_i)P(\lambda_i)-P'(\lambda_i)v(\lambda_i)\right)x_i\\
  &= -y_i^*P'(\lambda_i)v(\lambda_i)x_i.
\end{align*}
Here we used L'H\^{o}pital's rule for the second equality and
$P(\lambda_i)x_i=0$ for the last.

Hence, the expansion~\eqref{eq:dlpert} becomes
\begin{equation} \label{eq:dlpert2} \lambda_i=\lambda_i(L) +
  \frac{1}{v(\lambda_i)}\frac{\widehat y_i^*\Delta
    L(\lambda_i)\widehat
    x_i}{y_i^*P'(\lambda_i)x_i}+\mathcal{O}(\|\Delta
  L(\lambda_i)\|^2).
\end{equation}
Thus, up to first order, a small change 
of $L$ to $L+\Delta L$ perturbs $\lambda_i$ by 
$\frac{|\widehat y_i^*\Delta    L(\lambda_i)\widehat    x_i|}{|v(\lambda_i)||y_i^*P'(\lambda_i)x_i|}\leq \frac{\|\widehat y_i\|_2\|\Delta    L(\lambda_i)\|_2\|\widehat    x_i\|_2}{|v(\lambda_i)||y_i^*P'(\lambda_i)x_i|}$, where the last inequality is sharp in that equality can hold by taking $\Delta L(\lambda)=\sigma\widehat y_i\widehat x_i^*$ for any scalar $\sigma$. 

%

Similarly from~\eqref{eq:matpolyeigpert}, 
a small perturbation from $P$ to $P+\Delta P$ results in the eigenvalue perturbation $\frac{\|y_i\|_2\|\Delta    P(\lambda_i)\|_2\|x_i\|_2}{|y_i^*P'(\lambda_i)x_i|}$, which is also a sharp bound. 
Combining these two bounds, we see that the ratio between the perturbation of $\lambda_i$ in the original $P(\lambda)$ and the linearization $L(\lambda)$ is
\begin{equation}
  \label{eq:rratio}
  r_{\lambda_i}=\frac{1}{v(\lambda_i)}\frac{\|\widehat y_i\|_2\|\Delta L(\lambda_i)\|_2\|\widehat x_i\|_2}{\|y_i\|_2\|\Delta P(\lambda_i)\|_2\|x_i\|_2}.   
\end{equation}

Now recall that the absolute \emph{condition number} of an eigenvalue of a matrix polynomial may be defined as
\begin{equation}  \label{eq:conddef}
\kappa(\lambda) = \lim_{\epsilon \rightarrow 0} \sup \{|\Delta \lambda| : \left( P(\lambda + \Delta \lambda) + \Delta P (\lambda + \Delta \lambda) \right) \!\hat x = 0, \hat x \neq 0, \| \Delta P(\cdot) \| \leq \epsilon \| P(\cdot)\| \}.
\end{equation}
Here, we are taking the norm for matrix polynomials to be $\|P(\cdot)\|=\max_{\lambda\in\mathcal{D}}\|P(\lambda)\|_2$, where $\mathcal{D}$ is the domain  of interest that below we take to be the interval $[-1,1]$. 
In~\eqref{eq:conddef}, $\lambda + \Delta \lambda$ is the eigenvalue of $P+\Delta P$ closest to $\lambda$ such that $\lim_{\epsilon\rightarrow 0}\Delta\lambda=0$. 
Note that definition~\eqref{eq:conddef} is the \emph{absolute} condition number, in contrast to the relative condition number treated in~\cite{tisseur2000backward}, in which the supremum is taken of $|\Delta \lambda|/(\epsilon|\lambda|)$, and over $\Delta P(\cdot) = \sum_{i=0}^k\Delta P_i\phi_i(\cdot)$  such that $\|\Delta P_i\|_2\leq \epsilon \| E_i\|$ where $E_i$ are prescribed tolerances for the term with $\phi_i$. 
Combining this definition with the analysis above, we can see that the ratio of the condition numbers of the eigenvalue $\lambda$ for the linearization $L$ and the original matrix polynomial $P$ is
\begin{equation}  \label{eq:rrratio}
  \widehat r_{\lambda_i}=\frac{1}{v(\lambda_i)}\frac{\|\widehat y_i\|_2\|L(\cdot)\|\|\widehat x_i\|_2}{\|y_i\|_2\|P(\cdot)\|\|x_i\|_2}.   
\end{equation}
The eigenvalue $\lambda_i$ can be computed stably from the
linearization $L(\lambda)$ if $\hat r_{\lambda_i}$ is not significantly
larger than 1.  Identifying conditions to guarantee
$\hat r_{\lambda_i}=\mathcal{O}(1)$ is nontrivial and depends not only on
$P(\lambda)$ and the choice of the ansatz $v$, but also on the value
of $\lambda_i$ and the choice of polynomial basis.  For example,~\cite{Higham_06_02} 
considers the monomial case and shows that the coefficientwise
conditioning of $\lambda_i$ does not worsen much by forming
$L(\lambda)$ if $\frac{\max_i\|P_i\|_2}{\max\{\|P_0\|_2,\|P_k\|_2\}}$
is not too large, where $P(\lam) = \sum_{i=0}^k P_i\lambda^i$, and the
ansatz choice is $v=\lambda^{k-1}$ if $|\lambda_i|\geq 1$ and $v=1$ if
$|\lambda_i|\leq 1$.

Although it is difficult to make a general statement on when
$r_{\lambda_i}$ is moderate, 
here we show that in the
practically important case where the Chebyshev basis is used and
$\lambda_i\in \mathcal{D}:=[-1,1]$, 
the conditioning ratio can be bounded by 
a modest polynomial in $n$ and $k$, 
with an
appropriate choice of $v$, namely,  $v=1$. This means that the conditioning of these eigenvalues 
does not worsen much by forming the
linearization, and the eigenvalues can be computed in a stable manner from $L(\lambda)$.

\begin{theorem}
Let  $L(\lambda)$ be the $\mathbb{DL}(P)$ linearization  with ansatz $v(x)=1$
of a matrix polynomial $P(\lambda)$ expressed in the Chebyshev basis $\phi_j(x)=T_j(x)$. 
Let $\lambda_i$ be an eigenvalue of $P(\lambda)$ with right and left eigenvectors $x_i$ and $y_i$, respectively, such that $P(\lambda_i)x_i=0$, $y_i^TP(\lambda_i)=0$, and define 
$\widehat x_i = x_i\otimes \Lambda(\lam_i), \widehat y_i = y_i\otimes \Lambda(\lam_i)$ where $\Lambda(\lam) =
\left[T_{k-1}(\lam),\ldots,T_0(\lam)\right]^T$. 
Then 
for any eigenvalue $\lambda_i\in[-1,1]$, 
the conditioning ratio $\widehat r_{\lambda_i}$ in~\eqref{eq:rrratio} 
 is bounded by
\begin{equation}
  \label{eq:condrate}
 \widehat r_{\lambda_i}\leq 16n(e-1)k^4. 
\end{equation}
\end{theorem}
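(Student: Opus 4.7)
The plan is to decompose the bound on $\widehat r_{\lambda_i}$ into three multiplicative pieces that can be estimated separately. Since $v(x)=1$ we immediately have $1/|v(\lambda_i)|=1$, so this factor drops out of~\eqref{eq:rrratio}. For the eigenvector ratios, the Kronecker product structure gives $\|\widehat x_i\|_2 = \|x_i\|_2\|\Lambda(\lambda_i)\|_2$, and because $|T_j(\lambda_i)|\leq 1$ for $\lambda_i\in[-1,1]$, we obtain $\|\Lambda(\lambda_i)\|_2\leq\sqrt{k}$, and similarly for $\widehat y_i$. Their product contributes a factor of $k$, reducing the problem to bounding $\|L(\cdot)\|/\|P(\cdot)\|$ by $O(n(e-1)k^3)$.

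The substantive step is controlling $\|L(\cdot)\|$. By Lemma~\ref{lem:Lbzefunc} with $v=1$, the pencil $L(\lambda)=\lambda X+Y$ corresponds under $\phi$ to the bivariate polynomials $F(x,y)=[P(y)-P(x)]/(y-x)$ and $G(x,y)=[yP(x)-xP(y)]/(y-x)=P(y)-yF(x,y)$. I would first derive uniform pointwise bounds on $F$ and $G$ over $[-1,1]^2$. Writing $F(x,y)=\int_0^1 P'\bigl(x+t(y-x)\bigr)\,dt$, convexity of the spectral norm gives $\|F(x,y)\|_2\leq\max_{\lambda\in[-1,1]}\|P'(\lambda)\|_2$. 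Applying Markov's inequality to each scalar polynomial $u^TP(\lambda)v$ of degree at most $k$ (for unit vectors $u,v$) yields $\|P'(\cdot)\|\leq k^2\|P(\cdot)\|$, so $\|F(x,y)\|_2\leq k^2\|P(\cdot)\|$ and $\|G(x,y)\|_2\leq (k^2+1)\|P(\cdot)\|$ on $[-1,1]^2$.

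To transfer these pointwise estimates to bounds on the matrices $X$ and $Y$, I would use Parseval's identity for the tensor Chebyshev basis with weight $w(x)=(1-x^2)^{-1/2}$. Writing $F(x,y)=\sum_{i,j=0}^{k-1} X_{k-i,k-j}\,T_i(y)T_j(x)$ and using that the squared normalization constants $\alpha_i\alpha_j$ are bounded below by $\pi^2/4$, one gets
\[
\|X\|_F^2 \,\leq\, \tfrac{4}{\pi^2}\!\int_{-1}^1\!\!\int_{-1}^1 \|F(x,y)\|_F^2\,w(x)w(y)\,dx\,dy \,\leq\, 4\,n\,k^4\,\|P(\cdot)\|^2,
\]
using $\|F\|_F\leq\sqrt{n}\,\|F\|_2$ on the integrand. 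Hence $\|X\|_2\leq\|X\|_F\leq 2\sqrt{n}\,k^2\,\|P(\cdot)\|$, and the identical argument applied to $G$ gives an analogous bound for $\|Y\|_2$. The triangle inequality on $L(\lambda)=\lambda X+Y$ restricted to $\lambda\in[-1,1]$, combined with the factor $k$ from the Kronecker eigenvector factors, produces an upper bound on $\widehat r_{\lambda_i}$ of the stated order.

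The main obstacle is the bookkeeping needed to extract exactly the constants $16n(e-1)k^4$: the Parseval route above already delivers a bound of order $nk^3$, but the $e-1$ factor hints at a sharper coefficient-by-coefficient approach, in which one expands each $[T_j(y)-T_j(x)]/(y-x)$ explicitly as a sum of products $T_a(x)T_b(y)$ with controlled coefficients and then sums contributions over the Chebyshev modes of $P$; a series estimate of the form $\sum_{j\geq 1} 1/j!\leq e-1$ is the likely source of the transcendental constant. A secondary technical point is that the crude bound $\|X\|_2\leq\|X\|_F$ must be replaced by a finer estimate (e.g.\ $\|X\|_2\leq\sqrt{nk}\max_{i,j}\|X_{ij}\|_2$) if one wants to keep track of the exact $n$- and $k$-dependence, but this does not change the overall shape of the argument.
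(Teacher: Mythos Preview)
Your core argument is correct and in fact yields a \emph{stronger} bound than the one stated, so you have already proved the theorem; your confusion is only about matching the constant $16n(e-1)k^4$ exactly, which is unnecessary. Concretely, the integral representation $F(x,y)=\int_0^1 P'(x+t(y-x))\,dt$ together with Markov's inequality gives the clean pointwise bound $\|F(x,y)\|_2\le k^2\|P(\cdot)\|$ on $[-1,1]^2$, and your Parseval step then delivers $\|X\|_2\le\|X\|_F\le 2\sqrt{n}\,k^2\|P(\cdot)\|$, with an analogous estimate for $Y$. Combined with the factor $k$ from the eigenvector norms, this gives $\widehat r_{\lambda_i}\le C\sqrt{n}\,k^3$, which is dominated by $16n(e-1)k^4$ for all $n,k\ge 1$.

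Your route is genuinely different from the paper's. The paper does \emph{not} use Markov's inequality or Parseval. Instead, it bounds each scalar entry $H_{st}(x,y)$ of the B\'ezoutian by splitting $[-1,1]^2$ into the strip $|x-y|\le k^{-2}$ (handled via an external lemma, Lemma~B.1 of~\cite{NNvandooren14}, which is where the factor $e-1$ enters) and its complement (where the trivial bound $|H_{st}|\le 2k^2\|P(\cdot)\|\|v(\cdot)\|$ holds). It then bounds each Chebyshev coefficient by $4\max_{[-1,1]^2}|H_{st}|$ via the integral formula, and finally passes from an entrywise bound to a $\|\cdot\|_2$ bound on the $nk\times nk$ matrix. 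So your speculation that $e-1$ arises from a ``sharper coefficient-by-coefficient'' expansion or a factorial series is off the mark: it is an artifact of the external lemma the paper invokes, and your Markov-based estimate is both more elementary and tighter in the constant. The paper's approach has the minor advantage of being phrased for general ansatz $v$ before specializing to $v=1$, but for the theorem as stated your argument is cleaner and self-contained.
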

\begin{proof}
Since the Chebyshev polynomials 
$T_j(x)$ are all bounded by $1$ on $[-1,1]$, we
have $\|\hat x_i\|_2=c_i\| x_i\|_2$, $\|\widehat y_i\|_2=d_i\|
y_i\|_2$ for some $c_i,d_i\in[1,\sqrt{k}]$.  Therefore, we have
\begin{align}
 \widehat  r_{\lambda_i}
  &\leq \frac{k}{v(\lambda_i)}\frac{\|L(\cdot)\|}{\| P(\cdot)\|}. \label{eq:rlam}
\end{align}

We next claim that $\| L(\cdot)\|$ can be estimated as $\|
L(\cdot)\|=\mathcal{O}(\|P(\cdot)\|\|v(\cdot)\|)$.
To verify this it suffices to show that writing $L(\lambda) =
  \lambda X+Y$ we have 
  \begin{equation}
    \label{eq:xysmall}
    \|X\|_2\leq q_X(n,k)\|P(\cdot)\|\|v(\cdot)\|  ,\quad 
    \|Y\|_2\leq q_Y(n,k)\|P(\cdot)\|\|v(\cdot)\|  
  \end{equation}
  where $q_X,q_Y$ are low-degree polynomials with modest coefficients.
  Let us first prove the bound for $\|X\|_2$ in~\eqref{eq:xysmall} (to
  gain a qualitative understanding one can consult the construction of
  $X,Y$ in Section~\ref{sec:construction}).  

 Recalling~\eqref{eq:Lbezfunc}, $X$ is the B\'{e}zout block matrix
  $B(vI,P)$, so its $(k-i,k-j)$ block is the coefficients 
for $T_{i}(y)T_j(x)$
of the function
  \[ \mathcal{B}(P,-vI)=\frac{-P(y)v(x) +v(y)P(x)}{x-y} :=H(x,y). \]
  Recall that $H(x,y)$ is an $n\times n$ bivariate matrix polynomial, and denote
  its $(s,t)$ element by $H_{st}(x,y)$.  For every fixed value of
  $y\in[-1,1]$, by~\cite[Lem.~B.1]{NNvandooren14} 
  we have
  \[
  |H_{st}(x,y)|\leq(e-1)k^2\max_{x\in[-1,1]}|H_{st}(x,y)(x-y)| \leq
  2(e-1)k^2 \|P(\cdot)\|\|v(\cdot)\|  
  \]
  when $|x-y|\leq k^{-2}$. Clearly,
  \[
  |H_{st}(x,y)|\leq 2k^2 \|P(\cdot)\|\|v(\cdot)\|\quad \mbox{for}
  \quad |x-y|\geq k^{-2}.
  \]
  Together we obtain $\max_{x\in[-1,1]}|H_{st}(x,y)|\leq 2(e-1)k^2
  \|P(\cdot)\|\|v(\cdot)\|$. Since this holds for every $(i,j)$ and
  every fixed value of $y\in[-1,1]$ we obtain
  \begin{equation} \label{eq:Hbound}
    \max_{x\in[-1,1],y\in[-1,1]}|H_{st}(x,y)|\leq 2(e-1)k^2
    \|P(\cdot)\|\|v(\cdot)\|.
  \end{equation}
  To obtain~\eqref{eq:xysmall} it remains to bound the 
coefficients in the
  representation of a degree-$k$ bivariate polynomial $H_{st}(x,y) =
  \sum_{i=0}^{k}\sum_{j=0}^{k} h_{k-i,k-j}^{(st)}T_i(y)T_j(x)$. 
It holds
  \[h^{(st)}_{k-i,k-j} = \left(\frac{2}{\pi}\right)^2\int_{-1}^1 \int_{-1}^1
  \frac{H_{st}(x,y)T_i(y)T_j(x)}{\sqrt{(1-x^2)(1-y^2)}}dxdy,\] 
(for $i=k$ and $j=k$
the constant is $\frac{1}{\pi}$) and hence using $|T_i(x)|\leq 1$ on $[-1,1]$ we obtain
  \begin{align*}
    |h^{(st)}_{k-i,k-j}|&\leq \left(\frac{2}{\pi}\right)^2\max_{x\in[-1,1],y\in[-1,1]}|H_{st}(x,y)| \int_{-1}^1 \int_{-1}^1 \frac{1}{\sqrt{(1-x^2)(1-y^2)}}dxdy\\
    &=4\max_{x\in[-1,1],y\in[-1,1]}|H_{st}(x,y)|\\
    &\leq 8(e-1)k^2 \|P(\cdot)\|\|v(\cdot)\|,
  \end{align*}
  where we used~\eqref{eq:Hbound} for the last inequality.  Since this
  holds for every $(s,t)$ and $(i,j)$ we conclude that
  \[
  \|X\|_2\leq 8n(e-1)k^3 \|P(\cdot)\|\|v(\cdot)\|
  \]
  as required.  

 To bound $\|Y\|_2$ we use
   the fact that $Y$ is the B\'{e}zout block matrix
  $B(P,-vxI)$, and by an analogous argument we obtain the bound 
  \[
  \|Y\|_2\leq 8n(e-1)k^3 \|P(\cdot)\|\|v(\cdot)\|.
  \]
  This establishes~\eqref{eq:xysmall} 
  with 
$q_X(n,k)=q_Y(n,k)=8n(e-1)k^3$ 
, and we obtain 
  \begin{equation}    \label{eq:Lbound}
  \|L(\cdot)\|\leq 16n(e-1)k^3 \|P(\cdot)\|\|v(\cdot)\|.    
  \end{equation}




Substituting this into~\eqref{eq:rlam}
we obtain
\begin{align*}
  r_{\lambda_i}&\leq
\frac{k}{v(\lambda_i)}\frac{\|L(\cdot)\|}{\|P(\cdot)\|}
\leq \frac{k}{v(\lambda_i)}\frac{16n(e-1)k^3 \|P(\cdot)\|\|v(\cdot)\|}{\|P(\cdot)\|}. 
\end{align*}
With the choice $v = 1$ we have $v(\lambda_i) = \|v(\cdot)\|=1$,
which yields~\eqref{eq:condrate}. 
\end{proof}

Note that our discussion deals with the normwise condition number, as
opposed to the coefficientwise condition number as treated in~\cite{Higham_06_02}.
In practice, we observe that the eigenvalues of $L(\lambda)$ computed
via the QZ algorithm are sometimes less accurate than those of
$P(\lambda)$, obtained via QZ for the colleague
linearization~\cite{good1961colleague}, which is normwise
stable~\cite{NNvandooren14}.
The reason appears to be that the backward error resulting from the
colleague matrix has a special structure, but a precise explanation is
an open problem.

\section{Construction}\label{sec:construction}
We now describe an algorithm for computing $\mathbb{DL}$ pencils. 
The shift sum operation 
provides a means to obtain the $\mathbb{DL}$ pencil given the ansatz $v$. 
For general polynomial bases, however, the construction is not as trivial as for the monomial basis. 
We focus on the case where $\{\phi_i\}$ is an orthogonal polynomial basis, so that the multiplication matrix~\eqref{eq:defM} has tridiagonal structure. 
Recall that $F(x,y)$ and $G(x,y)$ satisfy the formulas~\eqref{eq:DLrelations},~\eqref{eq:F} and 
\eqref{eq:G}. Hence 
for $L(\lam) = \lambda X+Y\in \mathbb{DL}(P)$ with ansatz $v$, 
writing the bivariate equations in terms of their coefficient matrix expansions, we see that 
 $X$ and $Y$ need to satisfy the following equations:
defining $v = [v_{k-1},\ldots,v_0]^T$ to be the vector of coefficients of the ansatz, and setting 
\[
S = v\otimes \left[P_k,P_{k-1},\ldots,P_0\right]\qquad \hbox{and} \qquad T = v^T\otimes \left[P_k,P_{k-1},\ldots,P_0\right]^\B, 
\]
Note that $S$ and $T$ are the matrix representation of the functions $P(y)v(x)$ 
and $v(y)P(x)$ respectively. 
Hence by~\eqref{eq:G} we have 
\begin{equation}\label{eq:getY}
\begin{bmatrix} 0 \cr Y \end{bmatrix} M - M^T\begin{bmatrix}0 & Y\end{bmatrix} = TM - M^T S, 
\end{equation}
where $M$ is as in~\eqref{eq:defM}, the matrix representing the 
shift operation; recall that $M^{\mathcal{B}}=M^T$. Similarly, by~\eqref{eq:F} we have 
\begin{equation}\label{eq:getX}
XM = S - \begin{bmatrix}0 & Y \end{bmatrix}.
\end{equation}
Note that we have used the first equation of~\eqref{eq:DLrelations} instead of~\eqref{eq:F} to obtain an equation for $X$ because the former is simpler to solve. 
Now we turn to the computation of $X,Y$, which also explicitly shows that the pair 
$(X,Y)$ satisfying~\eqref{eq:getY},~\eqref{eq:getX}  is unique\footnote{We note that~\eqref{eq:getY} is a singular Sylvester equation, but if we force the zero structure in the first block column in $[0\ Y]$ then the solution becomes unique. 
}. 
We first solve~\eqref{eq:getY} for $Y$. Recall that $M$ in~\eqref{eq:Morth} is block tridiagonal, the $(i,j)$ block being $m_{i,j}I_n$. 
Defining $R = TM - M^T S$ and 
denoting by $Y_i,R_{i}$ the $i$th block rows of $Y$ and $R$ respectively, 
the first block row of~\eqref{eq:getY} yields $m_{1,1}Y_1 = -R_{1}$, hence 
$Y_1 = -\frac{1}{m_{1,1}}R_{1}$ (note that $m_{i,i}\neq 0$ because the polynomial basis is degree-graded). The second block row of~\eqref{eq:getY} gives
$Y_1M-(m_{1,2}Y_1+m_{2,2}Y_2) = R_{2}$, hence
$ Y_2 = \frac{1}{m_{2,2}}(Y_1M-m_{1,2}Y_1-R_{2})$.
Similarly, from the $i(\geq 3)$th block row of~\eqref{eq:getY} we get
\begin{equation}\nonumber
 Y_i = \frac{1}{m_{i,i}}(Y_{i-1}M-m_{i-2,i}Y_{i-2}-m_{i-1,i}Y_{i-1}-R_{i}),
\end{equation}
so we can compute $Y_i$ for $i=1,2,\ldots,n$ inductively. 
Once $Y$ is obtained, $X$ can be computed easily by~\eqref{eq:getX}.
The complexity is 
$\mathcal{O}((nk)^2)$, noting that $Y_{i-1}M$ can be computed with $\mathcal{O}(n^2k)$ cost. 
In Section~\ref{appendix} we provide a {\sc Matlab} code that computes  $\mathbb{DL}(P)$  for any orthogonal polynomial basis.

If $P(\lam)$ is expressed in the monomial basis we have (see~\cite[eq.~2.9.3]{Bini_94_01} for scalar polynomials) 
\[
L(\lam) = \begin{bmatrix}P_{k-1}&\ldots&P_0\cr\vdots&\iddots\cr P_0\cr\end{bmatrix}\!\!\begin{bmatrix}\hat{v}_kI_n&\ldots&\hat{v}_1I_n\cr&\ddots&\vdots\cr &&\hat{v}_kI_n\end{bmatrix} - \begin{bmatrix}\hat{v}_{k-1}I_n&\ldots&\hat{v}_0I_n\cr\vdots&\iddots\cr\hat{v}_0I_n\cr\end{bmatrix}\!\! \begin{bmatrix}P_k&\ldots&P_1\cr&\ddots&\vdots\cr&&P_k\end{bmatrix},
\]
where $\hat{v}_i = \left(v_{i-1} - \lambda v_i\right)$.  This relation can be used to obtain expressions for the block matrices 
$X$ and $Y$. For other orthogonal bases the relation is more complicated. 

Matrix polynomials expressed in the Legendre or Chebyshev basis are of practical importance, 
for example, for a nonlinear eigenvalue solver based on Chebyshev interpolation~\cite{Effenberger_11_01}.
Following~\cite[Table~5.2]{Mackey_05_01}, in Table~\ref{tab:ChebTDLP} we depict 
three $\mathbb{DL}(P)$ pencils for the cubic matrix polynomial 
$P(\lam) = P_3T_3(\lam) + P_2T_2(\lam) + P_1T_1(\lam) + P_0T_0(\lam)$, where $T_j(\lam)$ is the $j$th Chebyshev polynomial.

\begin{table}[h]
\centering
\caption{Three instances of pencils in $\mathbb{DL}(P)$ and their linearization condition for the cubic matrix polynomial $P(\lam) = P_3T_3(\lam) +
P_2T_2(\lam) + P_1T_1(\lam) + P_0T_0(\lam)$, expressed in the Chebyshev basis of the first kind. These three pencils 
form a basis for the vector space $\mathbb{DL}(P)$.}
\label{tab:ChebTDLP}
\footnotesize
\begin{tabular}{|c|c|c|}
\hline
\rule{0pt}{3ex}$v$ & $L(\lam)\in\mathbb{DL}(P)$ for given $v$ & Linearization condition \\[3pt]
\hline
& & \\
$\begin{bmatrix} 1\\0 \\0\end{bmatrix} $& $\lam\!\begin{bmatrix}
2P_3\!&0&0\\0&\!2P_3-2P_1\!&-2P_0\\0&-2P_0&\!P_3-P_1\!\end{bmatrix} +
\begin{bmatrix}P_2&\!P_1-P_3\!&P_0\\\!P_1-P_3\!&2P_0&\!P_1-P_3\!\\P_0&\!P_1-P_3\!&P_0\end{bmatrix} $
& 
\parbox{3cm}{\centering
$\det(P_0 + \frac{-P_3+P_1}{\sqrt{2}})\neq 0 $\\\vspace*{3pt}
$\det(P_0 - \frac{-P_3+P_1}{\sqrt{2}})\neq 0 $
}
\\& &\\
\hline
& &\\
$\begin{bmatrix} 0\\1\\0\end{bmatrix} $& $\lam\!\begin{bmatrix}
0&2P_3&0\\2P_3&2P_2&2P_3\\0&2P_3&P_2-P_0\end{bmatrix} +
\begin{bmatrix}-P_3&0&-P_3\\0&P_1-3P_3&P_0-P_2\\-P_3&P_0-P_2&-P_3\end{bmatrix} $
& 
\parbox{3cm}{\centering
$\det (-P_2+P_0)\neq 0$\\\vspace*{3pt}
$\det (P_3)\neq 0$ 
}
\\& &\\
\hline
& &\\
$\begin{bmatrix} 0\\0\\1\end{bmatrix} $& $\lam\!\begin{bmatrix}
0&0&2P_3\\0&4P_3&2P_2\\2P_3&2P_2&P_1+P_3\end{bmatrix} +
\begin{bmatrix}0&-2P_3&0\\-2P_3&-2P_2&-2P_3\\0&-2P_3&P_0-P_2\end{bmatrix}$
& $\det (P_3)\neq 0$
\\&&\\
\hline 
\end{tabular}
\end{table} 
\normalsize

\subsection{{\sc Matlab} code for $\mathbb{DL}(P)$} \label{appendix}
The formulae~\eqref{eq:F} and~\eqref{eq:G} can be used to construct any pencil in $\mathbb{DL}(P)$ without 
basis conversion, which can be numerically important~\cite{Amirasiani_09_01, Nakatsukasa_13_01}.  We provide a {\sc Matlab} code that constructs
pencils in $\mathbb{DL}(P)$ when the matrix polynomial is expressed in any orthogonal basis. If $P(\lambda)$ is expressed 
in the monomials then ${\tt a=[ones(k,1)];}$ ${\tt b = zeros(k,1);}$ ${\tt c = zeros(k,1);}$ and if expressed in the Chebyshev basis then 
${\tt a=[ones(k-1,1);2]/2;}$ ${\tt b = zeros(k,1);}$ ${\tt c = ones(k,1)/2;}$.
\vspace{.2cm}
\begin{verbatim}
function [X Y] = DLP(P,v,a,b,c)
%DLP constructs the DL pencil with ansatz vector v. 
% [X,Y] = DLP(P,v,a,b,c) returns the DL pencil lambda*X + Y 
%    corresponding to the matrix polynomial with coefficients P in an 
%    orthogonal basis defined by the recurrence relations a, b, c.

[n m] = size(P); k=m/n-1; s=n*k;              % matrix size & degree  
M = spdiags([a b c],[0 1 2],k,k+1);
M = kron(M,eye(n));                           % multiplication matrix

S = kron(v,P);
for j=0:k-1, jj=n*j+1:n*j+n; P(:,jj)=P(:,jj)';end % block transpose
T = kron(v.',P'); R=M'*S-T*M;                 % construct RHS

% The Bartels-Stewart algorithm on M'Y+YM=R
X = zeros(s); Y=X; ii=n+1:s+n; nn=1:n;        % useful indices
Y(nn,:)=R(nn,ii)/M(1); X(nn,:)=T(nn,:)/M(1);  % 1st column of X and Y
Y(nn+n,:)=(R(nn+n,ii)-M(1,n+1)*Y(nn,:)+Y(nn,:)*M(:,n+1:s+n))/M(n+1,n+1);
X(nn+n,:)=(T(nn+n,:)-Y(nn,:)-M(1,n+1)*X(nn,:))/M(n+1,n+1); % 2nd cols

for i = 3:k                                     % backwards subs
    ni=n*i; jj=ni-n+1:ni; j0=jj-2*n; j1=jj-n;   % useful indices
    M0=M(ni-2*n,ni); M1=M(ni-n,ni); m=M(ni,ni); % consts of 3-term
    Y0=Y(j0,:); Y1=Y(j1,:); X0=X(j0,:); X1=X(j1,:); % vars in 3-term
    Y(jj,:)=(R(jj,ii)-M1*Y1-M0*Y0+Y1*M(:,n+1:s+n))/m; 
    X(jj,:)=(T(jj,:)-Y1-M1*X1-M0*X0)/m;         % use Y to solve for X
end
\end{verbatim}
\vspace{.2cm}

%
%

\subsection*{Acknowledgments}
We wish to thank Nick Higham, Fran\c{c}oise Tisseur, and Nick Trefethen for their support and insightful comments. 
We are grateful to the anonymous referees for their careful reading of the manuscript that lead to an improved presentation. 
We finally thank Leiba Rodman for his words of appreciation and encouragement, 
and we posthumously dedicate this article to him.

%
%
%
%
%
%
%
%
\bibliographystyle{abbrv}
\bibliography{M4revnewSIMAXrev}

\end{document}